\documentclass{amsart}
\usepackage[utf8]{inputenc}
\usepackage[left=3cm,right=3cm,top=3cm,bottom=3cm]{geometry}

\usepackage{enumitem}
\usepackage{longtable,booktabs}
\usepackage{graphicx}
\usepackage{wrapfig}
\usepackage{subcaption}
\usepackage{listings}
\usepackage{xcolor}
\usepackage[foot]{amsaddr}
\definecolor{mygreen}{RGB}{28,132,80} 
\definecolor{mylilas}{RGB}{170,55,241}
\definecolor{myblue}{RGB}{20,30,171}
\definecolor{myred}{RGB}{200,10,30}
\usepackage{microtype}
\usepackage{multirow}
\usepackage{tikz}
\usepackage{array}
\newcounter{rowcount}
 \usepackage{doi}
 \usepackage{upquote} 
 
\usepackage{amsmath}
\usepackage{amsfonts}
\usepackage{amssymb}
\usepackage{amscd}
\usepackage{amstext}
\usepackage{mathrsfs}
\usepackage{amsbsy}
\usepackage{amsopn}
\usepackage{amsthm}
\usepackage{thmtools}
\usepackage{amsxtra}
\usepackage{float}
\usepackage{bbm}
\usepackage{url}
\usepackage{fancyvrb} 

\usepackage[numbers,square]{natbib}

\usepackage{hyperref}
\hypersetup{
hypertexnames=false,
colorlinks=true,
linkcolor=myred,
citecolor=blue}

\newtheoremstyle{mytheoremstyle} 
  {12pt}                    
  {\topsep}                    
  {\slshape}                   
  {}                           
  {\bfseries}                   
  {.}                          
  {.5em}                       
  {}  

\newtheoremstyle{mydefstyle} 
  {12pt}                    
  {12pt}                    
  {}                   
  {}                           
  {\bfseries}                   
  {.}                          
  {.5em}                       
  {}  

  \newtheoremstyle{plainsl}%
  	{}
  	{\topsep}
  	{\slshape} 
  	{}
  	{\normalfont\bfseries}
  	{.}
  	{ }
  	{}

\theoremstyle{plainsl}
\newtheorem{thm}{Theorem}[section]
\newtheorem*{thm*}{Theorem}
\newtheorem{prop}[thm]{Proposition}

\newtheorem{cor}[thm]{Corollary}
\newtheorem{lem}[thm]{Lemma}
\newtheorem*{lem*}{Lemma}

\theoremstyle{remark}
\newtheorem{rem}[thm]{Remark}
\newtheorem{prob}[thm]{Problem}

\theoremstyle{mydefstyle}

\newtheorem{ex}[thm]{Example}
\newtheorem*{note*}{Remark}

\newcommand{\Z}{\mathbb{Z}}

\newcommand{\Q}{\mathbb{Q}}
\newcommand{\FF}{\mathbb{F}}

\newcommand{\eps}{\varepsilon}

\renewcommand{\phi}{\varphi}

\newcommand{\sm}{\backslash}

  \DeclareMathOperator\SL{SL}
  \DeclareMathOperator\GL{GL}
  \DeclareMathOperator\PSL{PSL}
  \DeclareMathOperator\GU{GU}
  
  \DeclareMathOperator\PGL{PGL}

\renewcommand\qed{%
	\ifmmode\eqno\sqr53
	\else\nolinebreak\ \hfill\sqr53\medbreak\fi}
\renewcommand\proof{\noindent\textsl{Proof. }}
\newcommand\sqr[2]{{\vbox{\hrule height.#2pt
    \hbox{\vrule width.#2pt height#1pt \kern#1pt
        \vrule width.#2pt}\hrule height.#2pt}}}

\DeclareMathOperator\Sym{Sym}
\DeclareMathOperator\Aut{Aut}
\DeclareMathOperator\Inn{Inn}
\DeclareMathOperator\Out{Out}
\DeclareMathOperator\Cay{Cay}

\DeclareMathOperator\soc{soc}

\definecolor{myorange}{RGB}{204,102,0}

\title{ Perfect State Transfer and Cayley Presentations}

  \author{Arnbj\"org Soff\'ia \'Arnad\'ottir}
  \address{Arnbj\"org Soff\'ia \'Arnad\'ottir: Division of Mathematics,
  Science Institute, University of Iceland, Dunhaga 3, 107 Reykjav\'ik,
  Iceland}
  \email{asoffia@hi.is}

  \author{Krystal Guo}
  \address{Krystal Guo: Korteweg--de Vries Institute for Mathematics,
  University of Amsterdam, and QuSoft, Amsterdam, The Netherlands}
  \email{k.guo@uva.nl}

\begin{document}

\renewcommand{\itshape}{\slshape}
\VerbatimFootnotes

  \begin{abstract}
    We study perfect state transfer on Cayley graphs from the point of view that state transfer is a property of a graph and not of a group.  This paper is a bridge between the classical question about isomorphic Cayley
    graphs of non-isomorphic groups and quantum walks on graphs.
  
    We show that a Cayley graph of a  group with an abelian subgroup of index two is a Cayley graph of an    abelian group under any one of three hypotheses, two drawn from the theory of isomorphic Cayley graphs.  A statement of the same kind
    holds for extraspecial groups: every Cayley graph of an extraspecial $p$-group of order $p^{2n+1}$ with a conjugacy-closed connection set   is a Cayley graph of $\Z_p^{2n+1}$.  From these results we deduce that
    every explicit construction of perfect state transfer in the six papers we survey, on dihedral, dicyclic, generalized dihedral,
    $V_{8n}$ and extraspecial $2$-groups, is a non-abelian presentation of an  abelian Cayley graph.  Moreover, we show that a non-abelian group with an abelian
    subgroup of index two admits a connected Cayley graph with perfect
    state transfer if and only if its order is divisible by four.
  
    Genuinely non-abelian examples do exist.  We prove that, for every odd
    prime power $q\ge5$, the $\SL(2,q)$ graph of Pantangi and Sin, which
    they showed to admit perfect state transfer, is a Cayley graph of no
    abelian group; to our knowledge, this is the first infinite family of
    Cayley graphs with perfect state transfer provably admitting no
    abelian Cayley presentation.  The claim rests on a theorem that never
    mentions state transfer: for $q\ge5$ the $\SL(2,q)$ graph is a
    lexicographic double of a Cayley graph of $\PSL(2,q)$ whose
    automorphism group, by Praeger's inclusion theorem for primitive
    groups of simple diagonal type, has socle $\PSL(2,q)\times\PSL(2,q)$
    and contains no abelian regular subgroup.  We also construct an
    infinite family of Cayley graphs with peak state transfer, each with
    amount $16/225$, determine all regular subgroups of the automorphism
    group of every member, and show that each group over which a member is
    a Cayley graph surjects onto the Frobenius group of order $20$; in
    particular, none of them is abelian.          
  
    An appendix records a census of the connected vertex-transitive graphs
    with perfect state transfer on at most $30$ vertices, together with
    all of their Cayley groups; the smallest examples with no abelian
    Cayley presentation appear at $24$ vertices, the smallest non-Cayley
    example at $30$, and a partial scan at $32$ yields the first known
    non-abelian graphical regular representations with perfect state
    transfer.
   \par\medskip
  \noindent\textit{2020 Mathematics Subject Classification.}
  Primary 05C25; Secondary 05C50, 20B25, 81P45.






  \par\smallskip
  \noindent\textit{Keywords.}
  Cayley graphs, isomorphic Cayley graphs, regular subgroups of
  automorphism groups, graphical regular representations, perfect state
  transfer, peak state transfer, continuous-time quantum walks.
\end{abstract}

\maketitle

\section{Introduction}
One graph can be a Cayley graph of several non-isomorphic groups.  By Sabidussi's theorem \cite{Sab1958}, a graph $X$ is a Cayley graph of $G$ if and only if $\Aut(X)$ contains a regular subgroup isomorphic to $G$.  The same automorphism group may contain non-isomorphic regular subgroups, so the same graph may be a Cayley graph of several non-isomorphic groups; for example both graphs in Figure~\ref{fig:intro-universal} are Cayley graphs of every group of that order.  In particular, a non-abelian Cayley presentation need not be an intrinsically non-abelian feature of the graph: its automorphism group may also admit an abelian regular subgroup.  This paper takes that distinction as the starting point for the study of state transfer on Cayley graphs.

Let $A$ be the adjacency matrix of a graph $X$, and let $U(t)=\exp(itA)$ be the transition matrix of the continuous-time quantum walk on $X$.  The graph admits \textsl{perfect state transfer} from vertex $u$ to vertex $v$ at time $\tau$ if $\lvert U(\tau)_{v,u}\rvert=1$.  Perfect state transfer was introduced by Bose \cite{bose2003} and developed further by Christandl et al.\ \cite{christandl2004} and has since generated a substantial literature in algebraic graph theory.  We also consider \textsl{peak state transfer}, introduced in \cite{CouGuoSch2025}, in which the transfer probability between a pair of vertices attains its maximum possible value at some time.  Both notions depend only on $A$, and hence only on the graph $X$, not on a chosen presentation $X=\Cay(G,S)$.

\begin{figure}[t]
\centering
\begin{tikzpicture}[
  edge/.style={draw=black!40, line width=0.5pt},
  pst/.style={draw=myorange, line width=1.1pt},
  vx/.style={circle, draw=black, fill=white, inner sep=1.5pt},
  lb/.style={font=\small}]
  \begin{scope}[xshift=0cm]
    \foreach \k in {0,...,7}{ \coordinate (a\k) at ({90-45*\k}:1.9); }
    \foreach \k in {0,...,7}{ \foreach \d in {1,3}{
      \pgfmathtruncatemacro{\l}{mod(\k+\d,8)} \draw[edge] (a\k)--(a\l); } }
    \foreach \k in {0,...,3}{ \pgfmathtruncatemacro{\l}{\k+4} \draw[pst] (a\k)--(a\l); }
    \foreach \k in {0,...,7}{ \node[vx] at (a\k) {}; }
    \foreach \k in {0,...,7}{ \node[lb] at ({90-45*\k}:2.28) {$\k$}; }
    \node at (0,-2.85) {$X_{8c}$};
  \end{scope}
  \begin{scope}[xshift=6.6cm]
    \foreach \k in {0,...,11}{ \coordinate (b\k) at ({90-30*\k}:2.4); }
    \foreach \k in {0,...,11}{ \foreach \d in {1,2,4,5}{
      \pgfmathtruncatemacro{\l}{mod(\k+\d,12)} \draw[edge] (b\k)--(b\l); } }
    \foreach \k in {0,...,5}{ \pgfmathtruncatemacro{\l}{\k+6} \draw[pst] (b\k)--(b\l); }
    \foreach \k in {0,...,11}{ \node[vx] at (b\k) {}; }
    \foreach \k in {0,...,11}{ \node[lb] at ({90-30*\k}:2.78) {$\k$}; }
    \node at (0,-3.35) {$X_{12a}$};
  \end{scope}
\end{tikzpicture}
\caption{Two graphs with perfect state transfer, each of which is a Cayley graph of every group of its order. They are rows $X_{8c}$ (\textit{left}) and $X_{12a}$ (\textit{right}) of the census in Appendix~\ref{app:census}.  They are drawn in their circulant presentations, $X_{8c} = \Cay(\Z_8,\{\pm1,\pm3,4\})$ and $X_{12a} = \Cay(\Z_{12},\{\pm1,\pm2,\pm4,\pm5,6\})$, and in both the perfect state transfer at time $\pi/2$ is between the  vertices $x$ and $x+n/2$ connected by orange edges.}
\label{fig:intro-universal}
\end{figure} 

Cayley graphs are nevertheless a natural setting to study quantum walks; they are vertex-transitive, the group structure gives algebraic tools for studying the graph, and for abelian groups and for conjugacy-closed connection sets their spectra are especially accessible.  This has led to important results on cubelike graphs \cite{bernasconi,cheung,chan2013complex}, circulants \cite{basic2013char}, and other families of Cayley graphs.  Independently, the theory of isomorphic Cayley graphs asks when one graph can be represented over different groups.  Beginning with work of Joseph \cite{Joseph1995} and Morris \cite{Morris1999}, this subject has developed tools for detecting and constructing different regular subgroups of a graph's automorphism group. Results particularly relevant here concern cyclic, dihedral, and generalized dihedral groups \cite{morris2021two,MorrisSkelton}.  
We bring these two lines of work together, using the theory of isomorphic Cayley graphs to organize and compare the many constructions of perfect state transfer on Cayley graphs, to identify when ostensibly different non-abelian constructions produce graphs already arising in the abelian setting, and to distinguish genuinely non-abelian examples from abelian Cayley graphs in non-abelian presentations.

Two familiar families show why this question is necessary.  The graph obtained from $K_{4n}$ by deleting a perfect matching admits perfect state transfer \cite{TsoPleVeg2008}, but it is a Cayley graph of every group of order $4n$.  Similarly, the hypercube $Q_d$, usually presented as a Cayley graph of the elementary abelian group $C_2^d$, is also a Cayley graph of a non-abelian group for every $d\geq3$.  Thus the existence of a non-abelian Cayley presentation, even for an infinite family, does not by itself reveal a non-abelian mechanism for state transfer.  A related issue is the distinction between the left and right actions of $G$ on $\Cay(G,S)$: with our convention the right action always consists of automorphisms, whereas the left action does so only when its elements normalize $S$.  In Section~\ref{sec:lr-actions} we use this distinction to give counterexamples to \cite[Lemma~7.1]{godsil2012state} as stated and to explain why the result becomes valid when the connection set is closed under conjugation.  Section~\ref{sec:nonexcl-pst} develops the two families above in detail.
Two small examples are shown in Figure~\ref{fig:intro-universal}: each admits perfect state transfer and is a Cayley graph of every group of its order.

Our first main results give conditions under which a non-abelian Cayley graph can be realized as an abelian Cayley graph.  Suppose that $G$ has an abelian subgroup $H$ of index two.  In Section~\ref{sec:abind2} we construct an abelian regular subgroup of $\Aut(\Cay(G,S))$ under each of three hypotheses, each of which is a way of certifying that some element of $G\setminus H$ normalizes $S$ (Lemma~\ref{lem:abelian-index-two}).  First, if $S$ is closed under conjugation, then $\Cay(G,S)$ is a Cayley graph of an abelian group; this statement does not require perfect state transfer (Lemma~\ref{lem:nonabel-normal-2}).  Second, if the graph admits perfect state transfer and its perfect-state-transfer involution is left multiplication by an element of $G\setminus H$, then the graph is a Cayley graph of $H\times C_2$ (Lemma~\ref{lem:AbCayTninH}).    Third, if $G$ is generalized dihedral over $H$ and $S=S_0\cup bS_1$,
  where $b$ is a reflection, $S_0,S_1\subseteq H$ and $S_1$ is
  inverse-closed, then $\Cay(G,S)$ is a Cayley graph of $H\times C_2$;
  again, no perfect state transfer is required (Lemma~\ref{lem:gendihedral}).

These reductions yield a sharp existence result.  If $G$ has an abelian subgroup of index two and its centre contains no involution, then no connected Cayley graph of $G$ admits perfect state transfer and without the connectedness assumption, the only possibility is a perfect matching (Corollary~\ref{cor:odd-centre}).  For a non-abelian group with an abelian subgroup of index two, the absence of a central involution is equivalent to $\lvert G\rvert\equiv2\pmod4$.  Conversely, when $4\mid\lvert G\rvert$, the complete graph on $\lvert G\rvert$ vertices with a perfect matching deleted is a connected Cayley graph of $G$ with perfect state transfer.  Consequently, such a group admits a connected Cayley graph with perfect state transfer if and only if its order is divisible by four (Theorem~\ref{thm:mod4-dichotomy}).

In the same vein with different methods, we prove that if $G$ is an extraspecial $p$-group and $S$ is closed under conjugation, then $\Cay(G,S)$ is a Cayley graph of the elementary abelian group $\Z_p^{\,2n+1}$ of the same order (Lemma~\ref{lem:extraspecial}).    
Together with the index-two results, this shows that the explicit perfect-state-transfer constructions in \cite{AreShaGho2022,CaoFen2021,WanCao2024,KalBha2024, WanFen2023-bicayley}, over dihedral, dicyclic, generalized dihedral, and $V_{8n}$ groups, as well as the extraspecial-group constructions of Sin and Sorci in \cite{sin2020}, all admit abelian Cayley presentations.
In several cases, this means that the underlying graphs had already appeared earlier in the literature as Cayley graphs for some abelian groups but this connection was not made. Alternative group presentations may themselves be useful, but distinguishing a new presentation from a new graph is essential when the property under study is invariant under graph isomorphism. 

There are, on the other hand, genuinely non-abelian examples.  By  this we mean Cayley graphs with perfect state transfer whose automorphism groups contain no abelian  regular subgroup.  Pantangi and Sin \cite{PanSin2026} construct perfect  state transfer on Cayley graphs of $\mathrm{SL}(2,q)$,  $\mathrm{GL}(2,q)$ and $\mathrm{GU}(2,q)$ for odd prime powers  $q$.  In Section~\ref{sec:pansin} we prove that, for every odd prime
  power $q\geq5$, their $\mathrm{SL}(2,q)$ graph is not a Cayley graph
  of any abelian group (Theorem~\ref{thm:pansin-sl}).  The proof
  passes to the quotient by the twin pairs, a Cayley graph of
  $\mathrm{PSL}(2,q)$, where the two-sided translations and inversion
  generate a primitive group of simple diagonal type. The inclusion
  theorem for such groups \cite[Proposition~8.1]{Praeger1990} and a bound
  on the abelian subgroups of $\mathrm{PSL}(2,q)$ finish the argument.
  For the $\mathrm{GL}$ and $\mathrm{GU}$ families the quotient group
  is not simple and a similar argument does not apply; we verify by
  exhaustive computation in SageMath \cite{sage} that the four smallest
  members are not Cayley graphs of abelian groups
  (Proposition~\ref{prop:pansin-glgu}, Appendix~\ref{app:pansin-glgu})
  and leave the general case open
  (Problem~\ref{prob:pansin-stability}).  

The smallest examples of this phenomenon are found by computation in Section~\ref{sec:counterexamples}.
We determine every connected vertex-transitive graph on at most $30$ vertices that admits perfect state transfer and, for each one, every group over which it is a Cayley graph. This census is available in \texttt{.csv} format as ancillary files.  Every example on at most $22$ vertices has an abelian Cayley presentation.  On $24$ vertices there are $14$ graphs whose Cayley groups are exclusively non-abelian; these are the smallest Cayley graphs with perfect state transfer and no abelian Cayley presentation.  On $30$ vertices, the graph $T(6)[K_2]$ is the smallest vertex-transitive non-Cayley graph with perfect state transfer.  In a partial scan at order $32$ and valency up to $12$ (complete only up to valency $11$), we find $38$ graphical regular representations of non-abelian groups with perfect state transfer.  Since a graphical regular representation has only one Cayley presentation,
these are the strongest possible counterexamples to an abelian reduction and, to our knowledge, the first examples of their kind.  We also give a $48$-vertex Cayley graph of both a dihedral and a dicyclic group that has perfect state transfer but no abelian Cayley presentation, showing that the symmetry hypothesis in our generalized-dihedral result is necessary. 

Finally, we use peak state transfer to illustrate how a new state-transfer family can be studied together with all of its Cayley presentations.
In Section~\ref{sec:peak-ref} we establish a Cartesian-product principle
for peak state transfer: if $X$ admits peak state transfer from $u$ to $v$ at
time $\tau$, and $Y$ is periodic at $y$ at the same time, then $X\square Y$
admits peak state transfer from $(u,y)$ to $(v,y)$ at time $\tau$, with the
same amount (Lemma~\ref{lem:peak-product}).  Applying this principle to the
$20$-vertex graph $X_{14}$ and the Hamming graphs $H(d,6) = K_6^{\,\square d}$,
which are periodic at $\pi/3$, gives the family 
\[
   \Gamma_d=X_{14}\mathbin{\square}H(d,6),\qquad d\geq0,
\]
on $20\cdot6^d$ vertices.  Each $\Gamma_d$ admits peak state transfer at time $\pi/3$ with amount $16/225$, independent of $d$.
We determine all regular subgroups of $\Aut(\Gamma_d)$; this reduces the Cayley groups of $\Gamma_d$ to the regular subgroups of $\Aut(H(d,6))$, and shows that every one of them has the non-abelian Frobenius group $F_{20}$ as a quotient, so that no member of the family admits an abelian Cayley presentation.

Taken together, these results give a two-way connection between quantum walks and Cayley graph theory.  Results about regular subgroups and isomorphic Cayley graphs determine when a state-transfer construction genuinely depends on non-abelian structure; in the other direction, state transfer leads to new examples and classification problems for regular subgroups, including non-abelian graphical regular representations and families for which all Cayley groups can be described.  After recalling the necessary quantum-walk preliminaries in Section~\ref{sec:prelim}, we develop the left--right action issue and the basic examples, prove the abelian-reduction theorems, treat the genuinely non-abelian examples and the census, and construct the peak-state-transfer family.  We end with open problems suggested by this interaction.  The appendix contains the complete census through $30$ vertices, together with the spectrum and all Cayley groups of each graph; the accompanying data also include the partial scan at $32$ vertices.

\section{Quantum walks preliminaries}\label{sec:prelim}

Let $X$ be a simple graph with adjacency matrix $A$. The \textsl{transition matrix} of $X$ at time $t$ is $U(t) = \exp(itA)$. It is a symmetric, unitary matrix. For vertices $u$ and $v$, the transfer probability at time $t$ is
$|U(t)_{v,u}|^2$. We say that $X$ admits \textsl{perfect state transfer (PST)} from $u$ to $v$ at time $\tau$ if 
\[|U(\tau)_{v,u}| = 1.\]
If $A=\sum_\theta \theta E_\theta$ is the spectral decomposition of $A$,
then
\[
    U(t)_{v,u} = \sum_\theta e^{i\theta t}\,(E_\theta)_{v,u},
    \qquad\text{so}\qquad
    |U(t)_{v,u}| \le \sum_\theta \,|(E_\theta)_{v,u}|
\]
for all $t$, and \textsl{peak state transfer} occurs exactly when this upper bound
is attained at some time $t=\tau$. 
If peak state transfer occurs at time $\tau$, the transfer probability
$\alpha = |U(\tau)_{v,u}|^2 = \left(\sum_\theta \,|(E_\theta)_{v,u}| \right)^2$
at that time is the \textsl{amount} of the transfer.  Perfect state transfer is
peak state transfer with amount one.  We also call
$\{\theta : (E_\theta)_{v,u}\neq 0\}$ the \textsl{eigenvalue support} of the
pair $(u,v)$; it is the set of eigenvalues that contribute to the sum above.
This notion was introduced in \cite{CouGuoSch2025}, along with a spectral
characterization of the pairs of vertices that admit it.  Unlike perfect
or pretty good state transfer, peak state transfer carries a quantitative
parameter, and it can occur with very small amount; we take the view that
an example of peak state transfer should always be reported together with
its amount.

Finally, $X$ is \textsl{periodic} at a vertex $u$ at time $\tau$ if $|U(\tau)_{u,u}| = 1$.  If $X$ admits perfect state transfer from $u$ to $v$ at time $\tau$, then $X$ is periodic at $u$ at time $2\tau$. If $X$ is periodic at every vertex at time $\tau$, equivalently if $U(\tau)$ is a scalar matrix, then we say that $X$ is \textsl{periodic} at time $\tau$.

In this paper, our focus is on Cayley graphs which are vertex-transitive. If $X$ is a vertex-transitive graph admitting PST between vertices $u$ and $v$, at time $\tau$, then every vertex $u'$ in $X$ is involved in PST with some vertex $v'$ at time $\tau$ and this induces a partition of the vertices of $X$ into sets of size two (Theorem~\ref{thm:vxtr-pst}). Consequently, $X$ must have an even number of vertices and it is periodic at time $2\tau$. Periodic graphs have a particularly nice spectrum as summarized in a theorem by Godsil \cite{godsil2011periodic}.

\begin{thm}[{\cite[Corollary 3.3]{godsil2011periodic}}]
    A graph $X$ is periodic if and only if either:
    \begin{enumerate}[label=(\alph*)]
        \item The eigenvalues of $X$ are integers, or
        \item The eigenvalues of $X$ are rational multiples of $\sqrt\Delta$, for some square-free integer $\Delta.$
    \end{enumerate}
\end{thm}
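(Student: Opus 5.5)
We have to prove Godsil's characterization: $X$ is periodic, i.e.\ $U(\tau)$ is a scalar matrix for some $\tau>0$, if and only if the eigenvalues are all integers or all rational multiples of $\sqrt\Delta$ for one square-free integer $\Delta$. Throughout we use the spectral decomposition $A=\sum_\theta \theta E_\theta$, so $U(\tau)=\sum_\theta e^{i\theta\tau}E_\theta$. Since the $E_\theta$ are linearly independent orthogonal idempotents summing to $I$, $U(\tau)$ is scalar if and only if $e^{i\theta\tau}$ takes the same value for every eigenvalue $\theta$. Equivalently, $(\theta-\sigma)\tau\in2\pi\Z$ for all eigenvalues $\theta,\sigma$. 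So periodicity says exactly that all differences of eigenvalues are integer multiples of one real number $2\pi/\tau$. Hence all ratios $(\theta-\sigma)/(\theta'-\sigma')$ with nonzero denominator are rational. The converse also holds: if these ratios are rational, clearing denominators gives a suitable $\tau$.

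For sufficiency, suppose first the eigenvalues are integers. Then $\tau=2\pi$ works, giving $U(2\pi)=I$. If instead they are $r_\theta\sqrt\Delta$ with $r_\theta\in\Q$, let $N$ be a common denominator of the $r_\theta$. Then $\tau=2\pi N/\sqrt\Delta$ gives $e^{i\theta\tau}=1$ for all $\theta$. This direction is routine.

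For necessity, the key input is that the characteristic polynomial of $A$ is monic with integer coefficients. So the eigenvalues are algebraic integers, and the Galois group $\mathrm{Gal}(K/\Q)$ of the splitting field $K$ permutes them. If $X$ has no edges, all eigenvalues are $0$ and we are in case (a), so assume $A\neq0$. Then there are at least two distinct eigenvalues, and the trace condition $\sum\theta=0$ forces a nonzero difference. Fix distinct eigenvalues $\theta_0,\theta_1$ and set $\delta=\theta_1-\theta_0$. Periodicity gives $\theta-\sigma\in\Q\delta$ for all eigenvalues $\theta,\sigma$. Summing over $\sigma$ with multiplicities and using $\mathrm{tr}A=0$ gives $n\theta=\sum_\sigma(\theta-\sigma)\in\Q\delta$. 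Thus every eigenvalue is a rational multiple of $\delta$, and in particular $\delta^2=\sum_{\theta}m_\theta\theta^2\cdot c$ for some rational $c$. More cleanly, $\mathrm{tr}A^2=2|E(X)|=q\delta^2$ with $q\in\Q_{>0}$, so $\delta^2\in\Q$. Therefore $\delta=a\sqrt\Delta$ with $a\in\Q$ and $\Delta$ a square-free integer. If $\Delta=1$, the eigenvalues are rational algebraic integers, hence integers, giving case (a). Otherwise every eigenvalue lies in $\Q\sqrt\Delta$, giving case (b).

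The main subtlety is ensuring $\delta^2$ is rational rather than merely $\delta$ being algebraic. The trace-of-$A^2$ trick handles this neatly, so I expect no real obstacle beyond carefully justifying that "scalar $U(\tau)$" is equivalent to equality of all the phases $e^{i\theta\tau}$ via the independence of the idempotents. I would also remark that in case (b) the eigenvalues are actually integer multiples of $\sqrt\Delta$, since they are algebraic integers, though the statement as given does not require this.
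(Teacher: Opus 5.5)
The paper gives no proof of this statement (it is quoted from Godsil), so the relevant comparison is with Godsil's argument. Your proof is correct and complete, and it takes a different route from Godsil's.

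\textbf{Godsil's route.} He obtains this corollary from a vertex-local theorem. If $X$ is periodic at a vertex $u$, then the eigenvalue support of $u$ satisfies the ratio condition and is closed under algebraic conjugation. A Galois argument then shows that its elements are either all integers or all of the form $\frac12(c+b_\theta\sqrt\Delta)$ with $c,b_\theta\in\Z$.

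\textbf{Your route.} You bypass Galois theory entirely. The identity $\mathrm{tr}A=0$ puts every eigenvalue in $\Q\delta$, and $\mathrm{tr}A^2=2|E(X)|$ then forces $\delta^2\in\Q$. Integrality of the characteristic polynomial is needed only at the end, to upgrade rational eigenvalues to integers; your mention of the Galois group is never used and can be dropped.

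\textbf{What each buys.} Your argument is shorter and more elementary, but it is intrinsically global. The analogous identities at one vertex are $\sum_\theta(E_\theta)_{u,u}\theta=0$ and $\sum_\theta(E_\theta)_{u,u}\theta^2=\deg(u)$, and their weights $(E_\theta)_{u,u}$ need not be rational. So the trace trick does not yield Godsil's local theorem, which is what his Galois argument buys.

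Two small points.
\begin{enumerate}
\item In the sufficiency direction you tacitly take $\Delta>0$. For $\Delta<0$ the only real rational multiple of $\sqrt\Delta$ is $0$, which falls under case (a).
\item The paper defines periodicity at time $\tau$ vertex-wise. On a disconnected graph this only guarantees that $U(\tau)$ is diagonal and scalar on each component, for example $K_2\cup K_1$ at $\tau=\pi$. So your phase-equality and $\mathrm{tr}A=0$ steps should be run componentwise. This still places every eigenvalue in $\Q\cdot 2\pi/\tau$, and the rest of your argument goes through unchanged.
\end{enumerate}
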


Since vertex-transitive graphs are regular, their degree --- an integer --- is an eigenvalue and since PST in a vertex-transitive graph implies periodicity, it follows from the theorem that vertex-transitive graphs admitting PST have only integer eigenvalues.

\section{Left vs Right Actions}\label{sec:lr-actions}

Given a group $G$ with identity $1$ and an inverse-closed subset $S\subseteq G\sm \{1\}$, we will define the \textsl{Cayley graph} $X = \Cay(G,S)$ as the graph with vertex set $G$ and $g\sim h$ whenever $hg^{-1}\in S$. In the literature, this is sometimes called the \textsl{right action Cayley graph}. With this convention, the action of $a\in G$ by right multiplication $R_a : g \mapsto ga$ is an automorphism of $X$, since $(ha)(ga)^{-1} = hg^{-1}\in S$. Therefore, defining $R(G) = \{R_a:a\in G\}$, we have $R(G)\cong G$ and $R(G) \le \Aut(X)$. We can interpret this as $G$ being a subgroup of the automorphism group, but in this paper it will be important to distinguish between the right and the left action.   Two classes of Cayley graphs recur in this paper.  If $G$ is cyclic, then $\Cay(G,S)$
  is a \textsl{circulant}; if $G$ is the elementary abelian group $C_2^d$, then
  $\Cay(G,S)$ is a \textsl{cubelike graph}. Note that we use both $\Z_2^d$ and $C_2^d$; we have chosen to use multiplicative notation throughout but $\Z_2^d$ is more common in the literature.

The action of $a\in G$ by left multiplication $L_a : g \mapsto ag$ is an automorphism of $X$
if and only if  $aS a^{-1} = S$; that is whenever $a$ normalizes $S$. Thus, $L(G)$ (defined analogously to $R(G)$) is a subgroup of the symmetric group, $\Sym(G)$, but it is not in general contained in $\Aut(X)$.

Denote by $Z(G)$ the centre of $G$. Then we have 

\begin{enumerate}
    \item $R(G)$ and $L(G)$ are both regular subgroups of $\Sym(G)$;
    \item every element of one commutes with every element of the other;
    \item $R(G) \cap L(G) = \{R_z : z \in Z(G)\}$.
\end{enumerate}
These are easy consequences of the next lemma which follows from \cite[Theorem~I.6.5]{huppert}, but we include a short proof for completeness. 

\begin{lem}\label{lem:centralizer-magic}
    The centralizer of $R(G)$ in $\Sym(G)$ is $L(G)$. The centralizer of $L(G)$ in $\Sym(G)$ is $R(G)$. For $X =\Cay(G,S)$, the centralizer of $R(G)$ in $\Aut(X)$ is $L(G) \cap \Aut(X)$.
\end{lem}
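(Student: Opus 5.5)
The plan is to prove the first statement directly, obtain the second from it by the symmetry of inversion, and read the third off the first.

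\emph{First statement.} Note that $L_a R_b(g) = agb = R_b L_a(g)$ for all $a,b,g$, so $L(G)$ centralizes $R(G)$. For the reverse inclusion, let $\sigma\in\Sym(G)$ commute with every $R_b$, and set $a=\sigma(1)$. Then for every $g\in G$,
\[
\sigma(g)=\sigma(R_g(1))=R_g(\sigma(1))=ag=L_a(g),
\]
so $\sigma=L_a$. Hence the centralizer of $R(G)$ in $\Sym(G)$ is exactly $L(G)$.

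\emph{Second statement.} I will use the inversion map $\iota\colon g\mapsto g^{-1}$, which is a permutation of $G$. Since $\iota R_a\iota(g)=(g^{-1}a)^{-1}=a^{-1}g$, conjugation by $\iota$ sends $R(G)$ onto $L(G)$, and likewise sends $L(G)$ onto $R(G)$. Conjugating the first statement by $\iota$ therefore shows that the centralizer of $L(G)$ in $\Sym(G)$ is $R(G)$. Alternatively, one can repeat the argument of the first statement with $\sigma(g)=\sigma(L_g(1))=\sigma(1)\,g$ replaced by $\sigma(g)=g\,\sigma(1)$.

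\emph{Third statement.} The centralizer of $R(G)$ in $\Aut(X)$ equals the centralizer of $R(G)$ in $\Sym(G)$ intersected with $\Aut(X)$. By the first statement this is $L(G)\cap\Aut(X)$. If an explicit description is wanted, this subgroup is $\{L_a : aSa^{-1}=S\}$, by the remark preceding the lemma.

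No step is a real obstacle. The only point requiring care is the direction conventions: $R_g(1)=g$ is used in the first statement, and the inversion trick in the second must correctly swap the two actions.
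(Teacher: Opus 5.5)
Your proof is correct and follows the paper's argument. It uses the same commutation check $L_aR_b=R_bL_a$, the same evaluation at $1$ giving $\sigma=L_{\sigma(1)}$, and the same observation that $\Aut(X)\le\Sym(G)$ for the third claim. For the second claim, conjugating by inversion ($\iota R_a\iota=L_{a^{-1}}$) is a tidy way to make the paper's ``follows similarly'' precise; in your alternative sentence, though, the chain should read $\sigma(g)=\sigma(L_g(1))=g\,\sigma(1)$, not $\sigma(1)\,g$.
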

\proof First we show the statement for $\Sym(G)$. Let $g,h\in G$ and consider $L_g$ and $R_h$ acting on $x \in G$. We see that 
\[
x^{L_gR_h} = gxh = x^{R_hL_g},
\]
so $L_g, R_h$ commute. 

Now suppose $\sigma \in \Sym(G)$ commutes with all of the elements of $R(G)$; that is $x^{\sigma R_h} = x^{R_h\sigma}$ for all $x,h\in G$. In particular, this holds for $x=1_G$. Let $a = 1^{\sigma}$. We apply the left hand side to $1_G$ and obtain:
\[
1_G^{\sigma R_h}  = a^{R_h} = ah. 
\]
We apply the right hand side to $1_G$ and obtain:
\[
1_G^{R_h\sigma} = h^{\sigma}. 
\]
Since $1_G^{\sigma R_h} = 1_G^{R_h\sigma}$, we see that 
\[
h^{\sigma} = ah
\]
so $\sigma = L_a\in L(G)$, as required.

The second statement follows similarly. 
The vertex set of $X$ is $G$ and thus $\Aut(X) \leq \Sym(G)$. The centralizer of $R(G)$ in $\Aut(X)$ is then contained in $L(G)$ and the last statement follows. \qed

We will now address an incorrect statement on PST in Cayley graphs \cite[Lemma 7.1]{godsil2012state} which arises from the confusion of the left and the right regular action.
We start with a correct and more general version of the theorem in question due to Godsil \cite{godsil2011periodic}.

\begin{thm}[{\cite[Theorem 4.1]{godsil2011periodic}}]\label{thm:vxtr-pst}
    Let $X$ be a vertex-transitive graph with PST occurring at time $\tau$. Then $U(\tau)$ is a scalar multiple of a permutation matrix with order two and no fixed points that lies in the centre of $\Aut(X)$.
\end{thm}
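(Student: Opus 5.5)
Fix a vertex $u$ and let $v$ be the vertex receiving perfect state transfer from $u$ at time $\tau$. The plan has four steps: show that every column of $U(\tau)$ has a single nonzero entry, so $U(\tau)$ is a scaled permutation matrix $\gamma P$; show that $P$ commutes with $\Aut(X)$; show that $P$ itself is an automorphism; and finally prove that $P$ is a fixed-point-free involution.

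\emph{Step one.} Since $U(\tau)$ is unitary, its $u$-th column has norm one. If $|U(\tau)_{v,u}|=1$, every other entry of that column vanishes, so $U(\tau)e_u=\gamma_u e_v$ with $|\gamma_u|=1$.

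Every automorphism $g$ of $X$, viewed as a permutation matrix $P_g$, commutes with $A$. Hence it commutes with $U(\tau)=\exp(i\tau A)$, and so
\[
U(\tau)e_{u^g}=\gamma_u e_{v^g}.
\]
Since $X$ is vertex-transitive, every column of $U(\tau)$ has exactly one nonzero entry, all with the same modulus-one value $\gamma=\gamma_u$. Unitarity forces the positions of these entries to form a permutation, so $U(\tau)=\gamma P$ for some permutation matrix $P$.

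\emph{Step two.} Because $U(\tau)$ commutes with every $P_g$, so does $P$. Thus $P$ centralizes $\Aut(X)$.

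\emph{Step three.} $P$ commutes with $A$, because $U(\tau)$ is a power series in $A$. We have $P=\gamma^{-1}\exp(i\tau A)$, and this commutes with $A$. A permutation matrix commuting with $A$ is an automorphism of $X$, so $P\in\Aut(X)$. Combined with step two, $P$ lies in the centre of $\Aut(X)$.

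\emph{Step four: order two and no fixed points.} This is the part I expect to need most care. The matrix $U(\tau)$ is symmetric, because $A$ is symmetric and hence $\exp(i\tau A)$ is too. Therefore $P$ is symmetric, i.e.\ $P=P^{T}=P^{-1}$, so $P^2=I$.

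For fixed points, note that $P$ is not the identity, since it moves $u$ to $v\ne u$ (here $u\neq v$ is part of the definition of PST between distinct vertices). The fixed-point set of a central automorphism is invariant under $\Aut(X)$: if $w^P=w$, then
\[
(w^g)^P=(w^P)^g=w^g .
\]
Vertex-transitivity then implies that $P$ fixes either all vertices or none. The first is excluded because $u^P=v\ne u$, so $P$ has no fixed points. Hence $P$ is a fixed-point-free permutation of order two in the centre of $\Aut(X)$, which gives the statement.
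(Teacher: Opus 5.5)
Your proof is correct and complete. The paper does not prove this result itself but quotes it from Godsil \cite[Theorem~4.1]{godsil2011periodic}, and your route is essentially the standard argument given there: automorphisms commute with $U(\tau)$ and so spread the single nonzero entry to every column, $\exp(i\tau A)$ commutes with $A$ and is symmetric, and the fixed-point set of a central element is $\Aut(X)$-invariant.
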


Letting $T$ denote this permutation matrix, \cite[Lemma 7.1]{godsil2012state} states:

\noindent\textsl{``If $X$ is a Cayley graph for a group $G$ and perfect state transfer occurs at time $\tau$, then $T$ lies in the centre of $G$.''}

\noindent This is however only valid when both regular actions sit inside $\Aut(X)$: that is, this holds when
\[
    R(G) \le \Aut(X) \quad\text{and}\quad L(G) \le \Aut(X),
\]
but is not necessarily true in general. 
We observe that \cite[Lemma 7.1]{godsil2012state} is true with the added hypothesis that the connection set of $X$ is closed under conjugation; we call such a connection set, and the Cayley graph it defines, \textsl{conjugacy-closed}.  Some of the literature calls these \textsl{normal} Cayley graphs.  We avoid that term, because in permutation group theory a Cayley graph is called normal when $R(G)$ is a normal subgroup of its automorphism group, and the two conditions are different: the graphical regular representations of Section~\ref{sec:grrs} are normal in the permutation group sense, since their automorphism group is $R(G)$ itself, but their connection sets are not conjugacy-closed. This issue with  \cite[Lemma 7.1]{godsil2012state} has already been noticed and addressed by the field, without explicitly describing the error, as follows:
\begin{itemize}
    \item \cite[Theorem 1]{CaoWanFen2020} reproves this lemma with the added assumption that $S$ is conjugacy-closed. 
\item  \cite[Lemma 4.1]{arnadottir2023strongly} reproves this lemma for conjugacy-closed connection sets, without \cite{godsil2012state} attribution.
\item \cite[Lemma 3.2]{WanCao2022} and  \cite[Lemma 3.1]{WanCao2024} are restatements, attributed to \cite{godsil2012state}, with conjugacy-closed assumption added (but without proof).
\end{itemize}

For counterexamples to \cite[Lemma 7.1]{godsil2012state}, we have for instance many Cayley graphs for $S_4$, a group with trivial centre, that admit PST. This can be seen in the table in the appendix.

Concretely, let
\[S = \{(1\,2),(1\,3),(1\,4),(2\,3),(2\,4), (1\,3\,4),(1\,4\,3),(2\,3\,4),(2\,4\,3)\}.\]
We see that $S$ is inverse-closed and $X = \Cay(S_4,S)$\footnote{Graph6 string of $X$: \Verb"WX@zs]g?ioINoS@HysCaa@q@OxD?jXBTfWCKb_fPOup@B^C"}  is a connected $9$-regular graph on $24$ vertices with spectrum
\[
    \{9,\ 3^{(3)},\ 1^{(9)},\ (-1)^{(6)},\ (-3)^{(2)},\ (-5)^{(3)}\}.
\]
We verify in SageMath, exactly over $\Q(i)$, that $U(\pi/2) = i\,T$ where $T$ is the permutation matrix of $L_{(1\,2)}$; thus $X$ admits perfect state transfer from $x$ to $(1\,2)x$ at time $\pi/2$.  Since $T$ commutes with $R(S_4)$ and $S_4$ has trivial centre, we see that $T$ is not an element of $R(S_4)$, so the conclusion of \cite[Lemma~7.1]{godsil2012state} fails.  Note that $S$ is not closed under conjugation, but it is normalized by $(1\,2)$, and so $L_{(1\,2)}\in\Aut(X)$ as in Lemma~\ref{lem:centralizer-magic}.  Moreover, $|\Aut(X)| = 3072$ and the only regular subgroups of $\Aut(X)$ are $S_4$ and $C_2\times A_4$; in particular, $X$ is not a Cayley graph of any abelian group. This graph is shown in Figure~\ref{fig:s4-counterexample} with the transfer matching highlighted. 

\begin{figure}[t]
\centering
\begin{tikzpicture}[
  tr/.style={draw=black!35, line width=0.4pt},
  cyc/.style={draw=myblue!45, line width=0.4pt},
  pst/.style={draw=myorange, line width=1.1pt},
  vx/.style={circle, draw=black, fill=white, inner sep=1.3pt},
  idv/.style={circle, draw=black, double, fill=white, inner sep=1.5pt},
  lb/.style={inner sep=0.8pt, fill=white, fill opacity=0.8, text opacity=1, font=\scriptsize}]
  \coordinate (v0) at (90:4.20000000000000);
  \coordinate (v1) at (120:2.30000000000000);
  \coordinate (v2) at (300:1.45000000000000);
  \coordinate (v3) at (270:3.30000000000000);
  \coordinate (v4) at (330:4.20000000000000);
  \coordinate (v5) at (150:3.30000000000000);
  \coordinate (v6) at (0:2.30000000000000);
  \coordinate (v7) at (180:1.45000000000000);
  \coordinate (v8) at (210:4.20000000000000);
  \coordinate (v9) at (60:1.45000000000000);
  \coordinate (v10) at (30:3.30000000000000);
  \coordinate (v11) at (240:2.30000000000000);
  \coordinate (v12) at (270:4.20000000000000);
  \coordinate (v13) at (300:2.30000000000000);
  \coordinate (v14) at (120:1.45000000000000);
  \coordinate (v15) at (90:3.30000000000000);
  \coordinate (v16) at (150:4.20000000000000);
  \coordinate (v17) at (330:3.30000000000000);
  \coordinate (v18) at (180:2.30000000000000);
  \coordinate (v19) at (0:1.45000000000000);
  \coordinate (v20) at (30:4.20000000000000);
  \coordinate (v21) at (240:1.45000000000000);
  \coordinate (v22) at (210:3.30000000000000);
  \coordinate (v23) at (60:2.30000000000000);
  \draw[cyc] (v21) -- (v13);
  \draw[cyc] (v15) -- (v17);
  \draw[cyc] (v14) -- (v18);
  \draw[cyc] (v13) -- (v20);
  \draw[cyc] (v2) -- (v4);
  \draw[cyc] (v0) -- (v4);
  \draw[cyc] (v12) -- (v16);
  \draw[cyc] (v14) -- (v23);
  \draw[cyc] (v6) -- (v10);
  \draw[cyc] (v3) -- (v5);
  \draw[cyc] (v10) -- (v2);
  \draw[cyc] (v14) -- (v16);
  \draw[cyc] (v3) -- (v10);
  \draw[cyc] (v22) -- (v18);
  \draw[cyc] (v12) -- (v20);
  \draw[cyc] (v6) -- (v9);
  \draw[cyc] (v8) -- (v0);
  \draw[cyc] (v21) -- (v17);
  \draw[cyc] (v13) -- (v17);
  \draw[cyc] (v11) -- (v2);
  \draw[cyc] (v20) -- (v16);
  \draw[cyc] (v1) -- (v7);
  \draw[cyc] (v13) -- (v19);
  \draw[cyc] (v22) -- (v17);
  \draw[cyc] (v11) -- (v4);
  \draw[cyc] (v5) -- (v9);
  \draw[cyc] (v5) -- (v10);
  \draw[cyc] (v12) -- (v18);
  \draw[cyc] (v19) -- (v20);
  \draw[cyc] (v1) -- (v8);
  \draw[cyc] (v3) -- (v7);
  \draw[cyc] (v19) -- (v23);
  \draw[cyc] (v6) -- (v2);
  \draw[cyc] (v8) -- (v7);
  \draw[cyc] (v1) -- (v5);
  \draw[cyc] (v23) -- (v16);
  \draw[cyc] (v15) -- (v19);
  \draw[cyc] (v11) -- (v7);
  \draw[cyc] (v6) -- (v0);
  \draw[cyc] (v22) -- (v15);
  \draw[cyc] (v21) -- (v12);
  \draw[cyc] (v21) -- (v18);
  \draw[cyc] (v9) -- (v0);
  \draw[cyc] (v15) -- (v23);
  \draw[cyc] (v1) -- (v9);
  \draw[cyc] (v3) -- (v11);
  \draw[cyc] (v8) -- (v4);
  \draw[cyc] (v22) -- (v14);
  \draw[tr] (v6) -- (v12);
  \draw[tr] (v3) -- (v17);
  \draw[tr] (v1) -- (v19);
  \draw[tr] (v2) -- (v18);
  \draw[tr] (v9) -- (v12);
  \draw[tr] (v2) -- (v16);
  \draw[tr] (v0) -- (v16);
  \draw[tr] (v8) -- (v12);
  \draw[tr] (v21) -- (v0);
  \draw[tr] (v15) -- (v11);
  \draw[tr] (v14) -- (v4);
  \draw[tr] (v5) -- (v13);
  \draw[tr] (v0) -- (v18);
  \draw[tr] (v21) -- (v5);
  \draw[tr] (v21) -- (v6);
  \draw[tr] (v15) -- (v10);
  \draw[tr] (v11) -- (v16);
  \draw[tr] (v10) -- (v17);
  \draw[tr] (v8) -- (v16);
  \draw[tr] (v20) -- (v4);
  \draw[tr] (v22) -- (v5);
  \draw[tr] (v11) -- (v19);
  \draw[tr] (v20) -- (v0);
  \draw[tr] (v14) -- (v6);
  \draw[tr] (v12) -- (v4);
  \draw[tr] (v9) -- (v17);
  \draw[tr] (v1) -- (v20);
  \draw[tr] (v10) -- (v18);
  \draw[tr] (v23) -- (v4);
  \draw[tr] (v1) -- (v21);
  \draw[tr] (v8) -- (v19);
  \draw[tr] (v22) -- (v6);
  \draw[tr] (v3) -- (v19);
  \draw[tr] (v22) -- (v2);
  \draw[tr] (v20) -- (v7);
  \draw[tr] (v2) -- (v23);
  \draw[tr] (v9) -- (v18);
  \draw[tr] (v3) -- (v22);
  \draw[tr] (v1) -- (v17);
  \draw[tr] (v14) -- (v11);
  \draw[tr] (v13) -- (v7);
  \draw[tr] (v9) -- (v13);
  \draw[tr] (v8) -- (v13);
  \draw[tr] (v3) -- (v23);
  \draw[tr] (v15) -- (v5);
  \draw[tr] (v23) -- (v7);
  \draw[tr] (v15) -- (v7);
  \draw[tr] (v14) -- (v10);
  \draw[pst] (v18) -- (v7);
  \draw[pst] (v3) -- (v12);
  \draw[pst] (v9) -- (v23);
  \draw[pst] (v15) -- (v0);
  \draw[pst] (v21) -- (v11);
  \draw[pst] (v22) -- (v8);
  \draw[pst] (v13) -- (v2);
  \draw[pst] (v5) -- (v16);
  \draw[pst] (v10) -- (v20);
  \draw[pst] (v17) -- (v4);
  \draw[pst] (v6) -- (v19);
  \draw[pst] (v1) -- (v14);
  \node[idv] at (90:4.20000000000000) {};
  \node[vx] at (120:2.30000000000000) {};
  \node[vx] at (300:1.45000000000000) {};
  \node[vx] at (270:3.30000000000000) {};
  \node[vx] at (330:4.20000000000000) {};
  \node[vx] at (150:3.30000000000000) {};
  \node[vx] at (0:2.30000000000000) {};
  \node[vx] at (180:1.45000000000000) {};
  \node[vx] at (210:4.20000000000000) {};
  \node[vx] at (60:1.45000000000000) {};
  \node[vx] at (30:3.30000000000000) {};
  \node[vx] at (240:2.30000000000000) {};
  \node[vx] at (270:4.20000000000000) {};
  \node[vx] at (300:2.30000000000000) {};
  \node[vx] at (120:1.45000000000000) {};
  \node[vx] at (90:3.30000000000000) {};
  \node[vx] at (150:4.20000000000000) {};
  \node[vx] at (330:3.30000000000000) {};
  \node[vx] at (180:2.30000000000000) {};
  \node[vx] at (0:1.45000000000000) {};
  \node[vx] at (30:4.20000000000000) {};
  \node[vx] at (240:1.45000000000000) {};
  \node[vx] at (210:3.30000000000000) {};
  \node[vx] at (60:2.30000000000000) {};
  \node[lb] at (90:4.62) {$1$};
  \node[lb] at (130:2.66) {$(1\,3)(2\,4)$};
  \node[lb] at (300:1.03) {$(1\,4)(2\,3)$};
  \node[lb] at (279:3.68) {$(1\,2)(3\,4)$};
  \node[lb] at (330:4.62) {$(2\,3\,4)$};
  \node[lb] at (159:3.68) {$(1\,3\,2)$};
  \node[lb] at (10:2.66) {$(1\,4\,3)$};
  \node[lb] at (180:1.03) {$(1\,2\,4)$};
  \node[lb] at (210:4.62) {$(2\,4\,3)$};
  \node[lb] at (60:1.03) {$(1\,3\,4)$};
  \node[lb] at (39:3.68) {$(1\,4\,2)$};
  \node[lb] at (250:2.66) {$(1\,2\,3)$};
  \node[lb] at (270:4.62) {$(3\,4)$};
  \node[lb] at (310:2.66) {$(1\,3\,2\,4)$};
  \node[lb] at (120:1.03) {$(1\,4\,2\,3)$};
  \node[lb] at (99:3.68) {$(1\,2)$};
  \node[lb] at (150:4.62) {$(2\,3)$};
  \node[lb] at (339:3.68) {$(1\,3\,4\,2)$};
  \node[lb] at (190:2.66) {$(1\,4)$};
  \node[lb] at (0:1.03) {$(1\,2\,4\,3)$};
  \node[lb] at (30:4.62) {$(2\,4)$};
  \node[lb] at (240:1.03) {$(1\,3)$};
  \node[lb] at (219:3.68) {$(1\,4\,3\,2)$};
  \node[lb] at (70:2.66) {$(1\,2\,3\,4)$};
\end{tikzpicture}
\caption{The $24$-vertex counterexample $X = \Cay(S_4,S)$.
Edges are coloured by generator: the transposition $(1\,2)$ gives the orange perfect matching, the other four transpositions are grey, and the four $3$-cycles are blue.  Perfect state transfer at time $\pi/2$ occurs between vertices connected by the orange perfect matching. 
The graph is row $X_{24b}$ of the census (Appendix~\ref{app:census}).}
\label{fig:s4-counterexample}
\end{figure}

Counterexamples occur already on $8$ vertices.  Let $s$ be a reflection in $D_8$ (note that the index denotes the order, not the degree).  Then $\Cay(D_8,\,D_8\setminus\{1,s\})$ is $K_8$ minus a perfect matching, which has perfect state transfer at time $\pi/2$ by \cite{TsoPleVeg2008}, and the deleted matching pairs $x$ with $sx$, so the PST involution is $T = L_s$; since $s$ is not central, $T\notin R(D_8)$.  (Verified exactly in SageMath: $U(\pi/2) = -T$.)  This is precisely the presentation of Section~\ref{sec:kminuspm}: the connection set of $H_{2n}$ as a Cayley graph of $G$ fails to be conjugacy-closed exactly when the deleted involution is non-central, and it is then that the conclusion of the lemma fails. The graph, with its deleted matching shown in dashed lines, and a second, conjugacy-closed Cayley labelling of the same graph over $Q_8$ are drawn in Figure~\ref{fig:d8-counterexample}. 

\begin{figure}[t]
\centering
\begin{tikzpicture}[
  edge/.style={draw=black!40, line width=0.5pt},
  gone/.style={draw=myorange, dashed, line width=1.0pt},
  vx/.style={circle, draw=black, fill=white, inner sep=1.5pt},
  lb/.style={font=\small},
  qlb/.style={font=\small, text=mygreen}]
  \foreach \k in {0,...,7}{ \coordinate (w\k) at ({90-45*\k}:2.2); }
  \foreach \k in {0,...,7}{ \foreach \l in {0,...,7}{
    \pgfmathparse{ifthenelse(\l>\k && \l-\k!=4, 1, 0)}
    \ifnum\pgfmathresult>0 \draw[edge] (w\k) -- (w\l); \fi } }
  \foreach \k in {0,...,3}{ \pgfmathtruncatemacro{\l}{\k+4} \draw[gone] (w\k) -- (w\l); }
  \foreach \k in {0,...,7}{ \node[vx] at (w\k) {}; }
  \node[lb] at (90:2.55)  {$1$};
  \node[lb] at (45:2.55)  {$r$};
  \node[lb] at (0:2.6)  {$r^{2}$};
  \node[lb] at (-45:2.55) {$r^{3}$};
  \node[lb] at (-90:2.55) {$s$};
  \node[lb] at (-135:2.6){$r^{3}s$};
  \node[lb] at (180:2.6){$r^{2}s$};
  \node[lb] at (135:2.55) {$rs$};
  \node[qlb] at (90:3.0)  {$1$};
  \node[qlb] at (45:3.0)  {$i$};
  \node[qlb] at (0:3.1)  {$j$};
  \node[qlb] at (-45:3.0) {$k$};
  \node[qlb] at (-90:3.0) {$-1$};
  \node[qlb] at (-135:3.1){$-i$};
  \node[qlb] at (180:3.15){$-j$};
  \node[qlb] at (135:3.0) {$-k$};
\end{tikzpicture}
\caption{The graph $H_8$, that is, $K_8$ minus a perfect matching (dashed orange lines), with two Cayley labellings over $D_8$ (\textit{in black}) and $Q_8$ (\textit{in green}).  
In fact $H_8$ is a Cayley graph of all five groups of order eight, see Lemma~\ref{lem:H2n}, (row $X_{8d}$ of Appendix~\ref{app:census}).}
\label{fig:d8-counterexample}
\end{figure}  


\section{Examples of ``non-exclusive'' non-abelian PST}\label{sec:nonexcl-pst}

Here we collect examples which were known  in the literature to have PST, which also happen to be Cayley graphs of non-abelian groups. 

\subsection{Complete graphs with a perfect matching deleted}\label{sec:kminuspm}
Let $H_{2n}$ be $K_{2n}$ with a perfect matching deleted. Figure~\ref{fig:d8-counterexample} shows $H_8$ with vertex labels for two of its Cayley presentations.

\begin{lem}\label{lem:H2n}
    The graph $H_{2n}$, obtained from $K_{2n}$ by deleting a perfect matching, is Cayley for every group on $2n$ elements. Further, $H_{2n}$ admits a non-conjugacy-closed Cayley presentation over $G$ if and only if $G$ has a non-central involution, and   the presentation $\Cay(G,G\setminus\{1,h\})$ associated with an involution $h$ is conjugacy-closed exactly when $h$ is central.
\end{lem}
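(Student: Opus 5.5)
The plan is to write down the connection sets explicitly and then read off both claims. The key observation is that a Cayley graph $\Cay(G,S)$ on $2n$ vertices is isomorphic to $H_{2n}$ exactly when $|S|=2n-2$, that is, when $S=G\setminus\{1,h\}$ for a single element $h\neq 1$.

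For existence, let $G$ be any group of order $2n$. By Cauchy's theorem $G$ contains an involution $h$. Put $S=G\setminus\{1,h\}$. This set is inverse-closed, because $x\mapsto x^{-1}$ permutes $G$ and fixes both $1$ and $h$. With the convention of Section~\ref{sec:lr-actions}, vertices $g$ and $k$ are non-adjacent precisely when $kg^{-1}\in\{1,h\}$, i.e.\ when $k\in\{g,hg\}$. So each vertex misses exactly one other vertex, namely $hg$. The non-edges are the pairs $\{g,hg\}$, which form a perfect matching because $h^2=1$ and $h\neq 1$. Hence $\Cay(G,S)\cong H_{2n}$.

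Next I will show that every Cayley presentation of $H_{2n}$ has this form. Suppose $\Cay(G,S)\cong H_{2n}$. The graph is $(2n-2)$-regular, so $|S|=2n-2$ and $G\setminus(S\cup\{1\})=\{h\}$ for a single element $h$. Since $S$ and $\{1\}$ are both inverse-closed, $\{h\}$ is too, so $h^{-1}=h$ and $h$ is an involution. Thus the presentations of $H_{2n}$ over $G$ are exactly the sets $\Cay(G,G\setminus\{1,h\})$, with $h$ ranging over the involutions of $G$.

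It remains to decide when such a presentation is conjugacy-closed. Conjugation by any $a\in G$ is a bijection of $G$ that fixes $1$. So $aSa^{-1}=S$ holds if and only if $a\{h\}a^{-1}=\{h\}$, i.e.\ $aha^{-1}=h$. Therefore $S$ is conjugacy-closed for all $a$ exactly when $h\in Z(G)$, which is the last assertion of the lemma. Combining this with the classification of presentations, a non-conjugacy-closed presentation over $G$ exists if and only if $G$ has a non-central involution.

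None of these steps is hard. The only point needing care is the second step, where inverse-closure of $S$ is what forces the missing element to be an involution.
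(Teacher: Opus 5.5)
Your proof is correct and follows essentially the same route as the paper: Cauchy's theorem gives an involution $h$ and the presentation $\Cay(G,G\setminus\{1,h\})$, and conjugacy-closedness reduces to conjugation fixing the complement, that is, to $h\in Z(G)$. You also show that $(2n-2)$-regularity and inverse-closure force every Cayley presentation of $H_{2n}$ over $G$ to have this form; the paper leaves this implicit in ``the claim follows'', and it is exactly what the \emph{only if} direction of the middle assertion needs.
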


\proof
    Let $G$ be a group on $2n$ elements. Since $2 | 2n$, there is an element of order $2$, say $h\in G$. Then $\Cay(G, G\setminus \{h, 1_G\})$.

 Note that $S = G\setminus \{h, 1_G\}$ is closed under conjugation in $G$ if and only if its complement $\{1_G,h\}$ is normal in $G$,
which is equivalent to $ghg^{-1}=h$ for every $g\in G$ and the claim follows.\qed

Perfect state transfer in these graphs was first found in \cite{TsoPleVeg2008} where the graph is called the $(N/2)$-cross-polytope and the existence of perfect state transfer is proved for $N \equiv 0 \pmod 4$. Since then, it has appeared in a number of general, framework-building papers, including \cite{AngNorOpp2010,basic2013char}, which treat $\Cay(C_{4n}, C_{4n}\setminus\{0,2n\})$ as an integral circulant,  \cite{CouGodGuo2015} which characterizes perfect state transfer in strongly regular graphs (of which this is an example), and in \cite{TanFenCao2019} where it appears in the characterization of abelian Cayley graphs with perfect state transfer. In particular, the characterization result of  \cite{basic2013char} (and later \cite{CouGodGuo2015}) implies that PST in $H_N$ occurs if and only if $N \equiv 0 \pmod 4$. 

The graph $H_{4n}$ has been rediscovered several times, in papers which take
  a non-abelian group $G$ of order $4n$ with central involution $z \in G$ and
  establish perfect state transfer on $\Cay(G,\, G \setminus \{1, z\})$.  In
  each case the resulting graph is $H_{4n}$, so perfect state transfer already
  follows from \cite{TsoPleVeg2008}; since the graph is described only through
  a group presentation, the identification appears to have gone unnoticed.  These include:
\begin{itemize}  
      \item \cite{CaoFen2021}, Example~5.5: $\Cay(D_{4p}, D_{4p}\setminus\{1,a^p\})                      
      = H_{4p}$ for $p$ odd prime.                                 
      \item \cite{AreShaGho2022}, Example~4.1: $\Cay(T_{4n}, T_{4n}\setminus\{1,a^n\})                   
      =H_{4n}$ for all $n > 1$.                                      
      \item \cite{WanCao2024}, Example~5.1: $\Cay(T_{4n}, T_{4n}\setminus\{1,a^n\})                      
      =H_{4n}$ for $n$ odd.                                         
  \end{itemize}  

\subsection{Hypercubes}
Another rich example is the hypercubes, which can be realized as Cayley graphs of more than one group. The obvious groups are of course the elementary abelian 2-groups, $C_2^d$, but we now show that they are not the only ones. Figure~\ref{fig:cube-two-groups} shows $Q_3$ in its Cayley presentations over  $C_2^3$ and over $D_8$, which is the smallest case of the following lemma.  

\begin{lem}
    For $d\geq 3$, the hypercube $Q_d$ is a Cayley graph, with a connection set that is not
conjugacy-closed, of the non-abelian group $D_8\times C_2^{\,d-3}$.
\end{lem}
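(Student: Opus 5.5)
The plan is to settle the base case $d=3$ by an explicit connection set, and then lift it to all $d$ using Cartesian products.

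\textbf{Base case $d=3$.} Write $D_8=\langle r,s : r^4=s^2=1,\ srs=r^{-1}\rangle$ and take the connection set $S_3=\{s,\,rs,\,r^2\}$. All three elements are involutions, so $S_3$ is inverse-closed, and it generates $D_8$ because $s\cdot rs = r^{-1}$. The graph $\Cay(D_8,S_3)$ is therefore connected and cubic on $8$ vertices.

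To identify it with $Q_3$, first look at the subgraph spanned by the generators $s$ and $rs$. Since $(s\cdot rs)$ has order $4$, the alternating products of $s$ and $rs$ give two disjoint $8/2$-cycles: one through $1, s, r^{-1}, \ldots$ and one through the other coset. These are two $4$-cycles. Right multiplication by the central element $r^2$ then gives a perfect matching between them. This matching commutes with the walk structure, because $r^2$ is central, so $x\cdot r^2$ followed by $s$ equals $(xs)\cdot r^2$. Hence the graph is $C_4\square K_2=Q_3$.

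Alternatively, one could simply check that $D_8$ is bipartite with respect to $S_3$, and compute common neighbours directly.

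Finally, $S_3$ is not conjugacy-closed: $rsr^{-1}=r^2s\notin S_3$.

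\textbf{General $d$.} Write $C_2^{d-3}=\langle e_1,\dots,e_{d-3}\rangle$ and set
\[
S=\{(x,1):x\in S_3\}\cup\{(1,e_i):1\le i\le d-3\}.
\]
For any groups, $\Cay(G_1\times G_2,\,S_1\times\{1\}\cup\{1\}\times S_2)\cong\Cay(G_1,S_1)\square\Cay(G_2,S_2)$. This follows straight from the definition $g\sim h \iff hg^{-1}\in S$, since the quotient $hg^{-1}$ lies in $S$ exactly when one coordinate is fixed and the other moves by a generator.

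Thus
\[
\Cay(D_8\times C_2^{d-3},S)\cong Q_3\square\Cay(C_2^{d-3},\{e_i\})=Q_3\square Q_{d-3}=Q_d.
\]
The set $S$ is not conjugacy-closed: conjugating $(s,1)$ by $(r,1)$ gives $(r^2s,1)\notin S$. The group $D_8\times C_2^{d-3}$ is non-abelian because $D_8$ is.

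The only step with any real content is verifying the isomorphism $\Cay(D_8,S_3)\cong Q_3$. If the cycle argument above proves awkward to write out cleanly, the fallback is an explicit bijection $D_8\to C_2^3$. Define it by $r^a s^b\mapsto$ (the bits of $a$ together with $b$), adjusted suitably, and check the three edge types one at a time.
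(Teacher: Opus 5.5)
There is a genuine gap: your base case is wrong, because $\Cay(D_8,\{s,rs,r^2\})$ is not $Q_3$. As you observe yourself, $s\cdot rs=r^{-1}$, so $s$ and $rs$ already generate $D_8$ and the subgraph they span is connected. Since $s\cdot rs$ has order $4$, the alternating walk closes only after $2\cdot 4=8$ steps. With the paper's convention ($g\sim s'g$) this gives the single $8$-cycle $1,s,r,r^3s,r^2,r^2s,r^3,rs$, not two $4$-cycles. The $r^2$-edges then join antipodal vertices of this cycle, so your graph is the M\"obius ladder $\Cay(\Z_8,\{\pm1,4\})$, whose spectrum contains the non-integers $\pm\sqrt2-1$.

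The bipartiteness check you offer as an alternative fails for a structural reason. The element $r^2$ lies in all three subgroups of index two in $D_8$, and $s\cdot rs\cdot s\cdot rs\cdot r^2=r^{-2}r^2=1$ is a closed walk of odd length $5$. So no bijection $D_8\to C_2^3$ can rescue the argument. Your product step would then yield the non-bipartite graph $\Cay(D_8,S_3)\square Q_{d-3}$, not $Q_d$.

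The rest of your proposal is fine and is what the paper does: the Cartesian-product isomorphism, the non-abelianness, and the conjugation witness. Only $S_3$ needs to change. The paper takes $\{r,r^{-1},s\}$, for which your witness $rsr^{-1}=r^2s\notin S_3$ still works. There the $r$-edges form two $4$-cycles on $\langle r\rangle$ and $\langle r\rangle s$. The $s$-edges $r^k\sim r^{-k}s$ match these two cycles compatibly, reversing orientation, giving the prism $C_4\square K_2=Q_3$.

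A quick general criterion also settles the base case. Any inverse-closed $3$-subset of $D_8$ that avoids some subgroup of index two gives a cubic bipartite graph with parts of size $4$. Such a graph must be $K_{4,4}$ minus a perfect matching, which is $Q_3$. Examples are $\{r,r^{-1},s\}$, or $\{s,rs,r^2s\}$, which is not conjugacy-closed since it contains $rs$ but not $r(rs)r^{-1}=r^3s$. By the same reasoning, no connection set containing $r^2$ can ever work.
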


\proof
For the cube $Q_3$ with $d=3$, we write
\[
    D_8=\langle r,s\mid r^4=s^2=1,\ srs=r^{-1}\rangle
\]
for the dihedral group of order eight and we can see that
\[
    Q_3 \cong \Cay \bigl(D_8,\{r,r^{-1},s\}\bigr).
\]
This immediately gives examples in every higher dimension. For $d\ge 3$,
put
\[
    G_d=D_8\times C_2^{\,d-3}.
\]
Let $e_1,\ldots,e_{d-3}$ be the standard basis of $C_2^{\,d-3}$, and let
\[
    T_d=
    \{(r,1),(r^{-1},1),(s,1)\}
    \cup
    \{(1,e_i):1\le i\le d-3\}.
\]
Then
\[
    \Cay(G_d,T_d)
    \cong
    Q_3\square Q_{d-3}
    \cong
    Q_d.
\]
To see that $T_d$ is not closed under conjugation in $G_d$, we first look at $d=3$ and observe that  $rsr^{-1}=r^2s$ and so $T_3$ is not conjugacy-closed. For $d>3$, the conjugation in the first coordinate still sends $(s,1)$ to $(r^2s,1) \notin  T_d$.\qed

\begin{figure}[t]
\centering
\begin{tikzpicture}[scale=1.0,
  rsq/.style={draw=myblue!85, line width=0.9pt},
  smatch/.style={draw=black!45, line width=0.6pt},
  vx/.style={circle, draw=black, fill=white, inner sep=1.5pt},
  lb/.style={font=\small, inner sep=1pt},
  glb/.style={font=\small, text=mygreen, inner sep=1pt}]
  \coordinate (o0) at (-2.6, 2.6);
  \coordinate (o1) at ( 2.6, 2.6);
  \coordinate (o2) at ( 2.6,-2.6);
  \coordinate (o3) at (-2.6,-2.6);
  \coordinate (i0) at (-1.3, 1.3);
  \coordinate (i1) at ( 1.3, 1.3);
  \coordinate (i2) at ( 1.3,-1.3);
  \coordinate (i3) at (-1.3,-1.3);
  \draw[rsq] (o0)--(o1)--(o2)--(o3)--cycle;
  \draw[rsq] (i0)--(i1)--(i2)--(i3)--cycle;
  \foreach \k in {0,...,3}{ \draw[smatch] (o\k)--(i\k); }
  \foreach \p in {o0,o1,o2,o3,i0,i1,i2,i3}{ \node[vx] at (\p) {}; }
  \node[lb,  anchor=north east] at (-2.7, 2.45) {$000$};
  \node[glb, anchor=south west] at (-2.45, 2.7) {$1$};
  \node[lb,  anchor=south east] at ( 2.45, 2.7) {$100$};
  \node[glb, anchor=north west] at ( 2.7, 2.45) {$r$};
  \node[lb,  anchor=south west] at ( 2.7,-2.45) {$110$};
  \node[glb, anchor=north east] at ( 2.45,-2.7) {$r^{2}$};
  \node[lb,  anchor=north west] at (-2.45,-2.7) {$010$};
  \node[glb, anchor=south east] at (-2.7,-2.45) {$r^{3}$};
  \node[lb,  anchor=south west] at (-1.15, 1.42) {$001$};
  \node[glb, anchor=north west] at (-1.14, 1.12) {$s$};
  \node[lb,  anchor=north west] at ( 1.42, 1.15) {$101$};
  \node[glb, anchor=north east] at ( 1.14, 1.12) {$rs$};
  \node[lb,  anchor=north east] at ( 1.15,-1.42) {$111$};
  \node[glb, anchor=south east] at ( 1.14,-1.12) {$r^{2}s$};
  \node[lb,  anchor=south east] at (-1.42,-1.15) {$011$};
  \node[glb, anchor=south west] at (-1.14,-1.12) {$r^3 s$};
\end{tikzpicture}
\caption{The cube $Q_3$ with two Cayley labellings: the elementary abelian $\Z_2^{3}$ in black, with connection set the standard basis, and the dihedral $D_8=\langle r,s\mid r^{4}=s^{2}=1,\ srs=r^{-1}\rangle$ in green, with connection set $\{r,r^{-1},s\}$: the two $r$-cycles are the blue squares and the $s$-edges are the grey matching.  The dihedral presentation is not conjugacy-closed, since $rsr^{-1}=r^{2}s$ is not in the connection set.  As a graph with perfect state transfer, $Q_3$ is row $X_{8a}$ of the census (Appendix~\ref{app:census}).}
\label{fig:cube-two-groups}
\end{figure}

For small $d$ we can be completely explicit. Table~\ref{tab:hypercubes} lists, for $d\le 5$, every group over which $Q_d$ is a Cayley graph, obtained by enumerating the conjugacy classes of regular subgroups of $\Aut(Q_d) = C_2^d\rtimes S_d$ in GAP \cite{gap}. 
The cube $Q_3$ is a Cayley graph of exactly one non-abelian group, $D_8$, realizing the lemma above; $Q_4$ is a Cayley graph of five non-abelian groups; and $Q_5$ of exactly thirteen.

\begin{table}
\begin{center}
\renewcommand{\arraystretch}{1.25}
\begin{tabular}{@{}lr rr l@{}}
\toprule
Graph & $|V|$ & $|\Aut|$ & \#\,non-ab.\ & Non-abelian regular subgroups of $\Aut(Q_d)$\\
\midrule
$Q_1$ & $2$   & $2$    & $0$ & --- \\
$Q_2$ & $4$   & $8$    & $0$ & --- \\
$Q_3$ & $8$   & $48$   & $1$ & $D_8$ \\
$Q_4$ & $16$  & $384$  & $5$ & $(C_4{\times}C_2){:}C_2,\ C_4{:}C_4,\ C_8{:}C_2,\ QD_{16},\ C_2{\times}D_8$ \\
$Q_5$ & $32$  & $3840$ & $13$ &
$(C_2^3){:}C_4$, $(C_8{:}C_2){:}C_2$, $(C_4^2){:}C_2$,
$C_2{\times}((C_4{\times}C_2){:}C_2)$, \\
& & & & $C_2{\times}(C_4{:}C_4)$, $C_4{\times}D_8$, $(C_2^4){:}C_2$,
$(C_4{\times}C_2^2){:}C_2$, \\
& & & & $(C_4^2){:}C_2$, $C_2{\times}(C_8{:}C_2)$,
$C_2{\times}QD_{16}$, $C_8{:}(C_2{\times}C_2)$, \\
& & & & $C_2^2{\times}D_8$  \\
\bottomrule
\end{tabular}
\end{center}
\caption{Regular subgroups of small hypercubes. The group names are as returned by \textsf{StructureDescription} in GAP, and $A{:}B$ denotes a split extension $A\rtimes B$; this notation records neither the action nor, in some cases, the group itself.  For instance, the two entries $(C_4^2){:}C_2$ in the $Q_5$ row are non-isomorphic groups, namely $\mathrm{SmallGroup}(32,11)$ and $\mathrm{SmallGroup}(32,34)$.  The GAP ID $(o,i)$ of every group appearing in the census is given in the appendix. }
\label{tab:hypercubes}
\end{table}

\section{Constructing abelian regular subgroups in non-abelian Cayley graphs}\label{sec:abind2}

Recall that by Sabidussi's Theorem~\cite{Sab1958}, a graph $X$ is a Cayley graph of a group $G$ if and only if $\Aut(X)$ contains a regular subgroup isomorphic to $G$ (see also \cite[Section~3.7]{godsilAGT}). 
This naturally implies that a graph can be Cayley for many different groups, since the automorphism group can contain many (non-isomorphic) regular subgroups, and we saw evidence of this in the previous section. In this section, we give some conditions for when Cayley graphs for non-abelian groups are also Cayley graphs for abelian groups, focusing on groups having an abelian subgroup of index two on the one hand, and extraspecial $p$-groups on the other hand.  We will also use freely the standard fact that a transitive abelian
  permutation group is regular: if an element $a$ of such a group $A$
  fixes a point $\omega$, then it fixes $\omega^g$ for every $g\in A$,
  since $\omega^{ga} = \omega^{ag} = \omega^{g}$, and by transitivity
  $a$ fixes every point, so $a = 1$.

All three of our index-two reductions come from a single observation: if some
element outside the abelian index 2 subgroup, $H$, normalizes the connection set, then it can be adjoined to
the right regular action of $H$ to produce an abelian regular subgroup.  We
isolate that observation first, and then record three ways of certifying its
hypothesis.

\begin{lem}\label{lem:abelian-index-two}
    Let $G$ be a finite group with an abelian subgroup $H$ of index two, and
    let $X = \Cay(G,S)$.  If $gSg^{-1} = S$ for some $g\in G\setminus H$, then
    $A = \langle R(H), L_g\rangle$ is an abelian regular subgroup of $\Aut(X)$
    of order $|G|$; in particular, $X$ is a Cayley graph of an abelian group.
    If moreover $g^2 = 1_G$, then $A\cong H\times C_2$.
\end{lem}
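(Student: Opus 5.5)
We prove, in order: $A\le\Aut(X)$; $A$ is abelian; $A$ is transitive (hence regular, with $|A|=|G|$); and the structure of $A$ when $g^2=1_G$.

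\emph{$A\le\Aut(X)$.} Right translations are always automorphisms. Since $gSg^{-1}=S$, the discussion preceding Lemma~\ref{lem:centralizer-magic} gives $L_g\in\Aut(X)$. Hence $A=\langle R(H),L_g\rangle\le\Aut(X)$.

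\emph{$A$ is abelian.} The generators of $R(H)$ commute with one another because $H$ is abelian and $R(H)\cong H$. By Lemma~\ref{lem:centralizer-magic}, $L_g$ commutes with every element of $R(G)$, in particular with $R(H)$. So the generating set of $A$ consists of pairwise commuting elements, and $A$ is abelian.

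\emph{$A$ is transitive.} The orbits of $R(H)$ on $G$ are the left cosets $xH$, so $R(H)$ has exactly the two orbits $H$ and $gH$, using $[G:H]=2$ and $g\notin H$. The element $L_g$ sends $1_G\in H$ to $g\in gH$, so $A$ merges the two orbits and is transitive on $G$. By the standard fact recalled at the start of this section, a transitive abelian permutation group is regular. Therefore $|A|=|G|$, and Sabidussi's theorem shows that $X$ is a Cayley graph of the abelian group $A$.

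\emph{The case $g^2=1_G$.} Here $L_g$ has order $2$ (since $g\ne 1_G$), and $\langle L_g\rangle\cong C_2$. We show $\langle L_g\rangle\cap R(H)=1$. Suppose $L_g=R_h$ for some $h\in H$. Evaluating at $1_G$ gives $g=h\in H$, a contradiction. Thus $R(H)$ and $\langle L_g\rangle$ are commuting subgroups with trivial intersection, so their product is the internal direct product $R(H)\times\langle L_g\rangle\cong H\times C_2$. This product is contained in $A$ and has order $2|H|=|G|=|A|$, so it equals $A$.

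The only step that requires care is transitivity, which rests on the orbit decomposition of $R(H)$ into the two left cosets. The remaining steps follow directly from Lemma~\ref{lem:centralizer-magic} and an order count.
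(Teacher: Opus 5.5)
Your proof is correct and follows the paper's argument essentially step for step. The steps match: $L_g\in\Aut(X)$ from $gSg^{-1}=S$, commutation with $R(H)$ via Lemma~\ref{lem:centralizer-magic}, transitivity from the two $R(H)$-orbits $H$ and $gH$, regularity of transitive abelian groups, and the direct product $R(H)\times\langle L_g\rangle$ when $g^2=1_G$. The only cosmetic difference is that you show $L_g\notin R(H)$ by evaluating at $1_G$, whereas the paper observes that $R(H)$ fixes $H$ setwise while $L_g$ does not.
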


\proof  Vertices $x$ and $y$ of $X$ are adjacent if and only if $yx^{-1}\in S$,
  and $(gy)(gx)^{-1} = g(yx^{-1})g^{-1}$, so $L_g$ is an automorphism of $X$ if
and only if $gSg^{-1} = S$; by hypothesis, $L_g\in\Aut(X)$.  By
Lemma~\ref{lem:centralizer-magic}, $L_g$ centralizes $R(G)$ and in particular
$R(H)$, which is abelian because $H$ is; hence $A$ is abelian.  The orbits of
$R(H)$ on $G$ are the cosets $xH$, of which there are exactly two, namely $H$
and $gH$, and $L_g$ maps $H$ to $gH$.  So $A$ is transitive, and a transitive
abelian permutation group is regular; thus $|A| = |G|$, and $X$ is a Cayley
graph of the abelian group $A$ by Sabidussi's theorem. 

Suppose finally that $g^2 = 1_G$.  Every element of $R(H)$ fixes $H$ setwise
while $L_g$ does not, so $L_g\notin R(H)$; since $L_g^2 = L_{g^2}$ is the
identity and $L_g$ centralizes $R(H)$, we get
$A = R(H)\times\langle L_g\rangle\cong H\times C_2$. \qed

The next lemma is an easy consequence of Lemma~\ref{lem:abelian-index-two}, but it also follows from \cite[Theorem~2.3]{morris2021two} together with \cite[Theorem~2.6]{MorrisSkelton} (see Lemma~\ref{lem:gendihedral}). Recall that a Cayley graph is \textsl{conjugacy-closed} if its connection set is closed under conjugation. 

\begin{lem}\label{lem:nonabel-normal-2} Suppose $G$ is a non-abelian group with an abelian subgroup $H$ of index $2$.
    If $\Cay(G,S)$ is conjugacy-closed, then it is a Cayley graph for an abelian group. 
\end{lem}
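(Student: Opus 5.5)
The plan is to reduce directly to Lemma~\ref{lem:abelian-index-two}. That lemma only requires a single element $g\in G\setminus H$ satisfying $gSg^{-1}=S$. Conjugacy-closedness of $S$ gives $gSg^{-1}=S$ for \emph{every} $g\in G$. So the only step is to observe that $G\setminus H$ is nonempty, and this holds because $H$ has index two in $G$.

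Concretely, I would fix any $g\in G\setminus H$. Since $S$ is a union of conjugacy classes of $G$, conjugation by $g$ permutes each class contained in $S$, and hence $gSg^{-1}=S$. Equivalently, $L_g\in\Aut(\Cay(G,S))$, as noted in Section~\ref{sec:lr-actions}.

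Lemma~\ref{lem:abelian-index-two} then shows that $A=\langle R(H),L_g\rangle$ is an abelian regular subgroup of $\Aut(\Cay(G,S))$. By Sabidussi's theorem, $\Cay(G,S)$ is a Cayley graph of the abelian group $A$. Non-abelianness of $G$ is not actually needed; it only keeps the statement from being trivial.

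There is no real obstacle here. The one point to check is that $L_g$ acts as an automorphism for the right-action convention used in the paper. That check is already done inside the proof of Lemma~\ref{lem:abelian-index-two}, via $(gy)(gx)^{-1}=g(yx^{-1})g^{-1}$.

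If one also wants to identify $A$, choose $g$ to be an involution in $G\setminus H$ when one exists; the lemma then gives $A\cong H\times C_2$. In general no such involution need exist, for example in the dicyclic groups. In that case $A$ is the abelian group generated by $R(H)$ and $L_g$, which has order $|G|$.
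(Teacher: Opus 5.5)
Your proposal is correct and is exactly the paper's argument: pick any $g\in G\setminus H$, note that conjugacy-closedness gives $gSg^{-1}=S$, and apply Lemma~\ref{lem:abelian-index-two}. Your side remarks are also accurate. Non-abelianness of $G$ is not needed. The identification $A\cong H\times C_2$ requires an involution outside $H$, and dicyclic groups have none.
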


\proof Since $H$ has index two, we may pick $g\in G\setminus H$.  As $S$ is
conjugacy-closed, $gSg^{-1} = S$, and Lemma~\ref{lem:abelian-index-two}
applies. \qed

\begin{lem}\label{lem:centre-in-H}
    Let $G$ be a non-abelian group with an abelian subgroup $H$ of index two.  Then $Z(G)\le H$.  Consequently, if $s\in G\setminus H$ then $L_s\notin R(G)$.
\end{lem}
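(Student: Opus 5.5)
The plan is to prove $Z(G)\le H$ by contradiction, and then read off the second statement from item (3) of Section~\ref{sec:lr-actions}, namely $R(G)\cap L(G)=\{R_z: z\in Z(G)\}$.

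For the first statement, suppose some $z\in Z(G)$ lies outside $H$. Since $H$ has index two, $G = H\cup zH = \langle H, z\rangle$. The element $z$ commutes with everything in $G$, in particular with every element of $H$. Also, $H$ is abelian, so its elements commute among themselves. Hence the generators of $G$ pairwise commute, and $G$ is abelian, contradicting the hypothesis. Concretely, for $h_1,h_2\in H$ and $\epsilon_1,\epsilon_2\in\{0,1\}$, one checks directly that $z^{\epsilon_1}h_1$ and $z^{\epsilon_2}h_2$ commute. Thus $Z(G)\le H$.

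For the consequence, take $s\in G\setminus H$ and suppose $L_s\in R(G)$. Then $L_s\in R(G)\cap L(G)$, so $L_s = R_z$ for some $z\in Z(G)$. Evaluating at $1_G$ gives $s = z$, so $s$ is central. By the first part, $s\in H$, which is a contradiction.

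If one prefers not to cite item (3) directly, the same conclusion follows from the hands-on argument of Lemma~\ref{lem:centralizer-magic}. Suppose $L_s = R_a$. Evaluating at $1_G$ gives $a = s$, and then $sx = xs$ for all $x\in G$, so $s\in Z(G)$. There is no real obstacle here; the only thing to be careful about is that "non-abelian" is genuinely what forces the centre into $H$.
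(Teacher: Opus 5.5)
Your proof is correct and follows the paper's route: the contradiction via $G=\langle H,z\rangle$ being abelian is exactly the paper's argument for $Z(G)\le H$. Your fallback for the consequence (evaluate $L_s=R_a$ at $1_G$ to get $a=s$, hence $s$ is central, hence $s\in H$) is precisely the paper's proof, and citing item (3) instead just repackages that same evaluation-at-$1_G$ step.
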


\proof Suppose $z\in Z(G)\setminus H$.  Since $H$ has index two, we see that $G = \langle H, z\rangle$, and since $H$ is abelian and $z$ is central, the group $G$ is abelian, a contradiction.  For the second statement, suppose $L_s = R_t$ for some $t\in G$.  Applying both permutations to $1_G$ gives $s = t$, and then $sx = xs$ for all $x\in G$, so $s\in Z(G)\le H$. \qed 

We saw in Section~\ref{sec:lr-actions} that if $X$ is a vertex-transitive graph with PST, then there is a permutation matrix $T$ with order two and no fixed points lying in the centre of $\Aut(X)$ (Theorem~\ref{thm:vxtr-pst}). We will call $T$ the \textsl{PST involution} of $X$.
If we suppose further that $X$ is a Cayley graph, that is $X = \Cay(G,S)$ for some $S\subseteq G$, then there are two cases: 
\begin{enumerate}[label=(\alph*)]
       \item $T$ is an element of $R(G)$ (we equate the elements of $G$ with the permutation matrices corresponding to the right action); or 
       \item $T$ is not an element of $R(G)$.
   \end{enumerate}
We treat these two cases separately. 

First we recall from Lemma~\ref{lem:centralizer-magic} that the centralizer of $R(G)$ in $\Aut(X)$ is $L(G) \cap \Aut(X)$. 
Moreover, $L_g \in \Aut(X)$ if and only if $gSg^{-1} = S$,  thus we see that 
\[
L(G) \cap \Aut(X) = \{L_g \mid gSg^{-1} = S \}. 
\]
Thus the PST involution is $T= L_s$ for some $s \in G$ such that $sSs^{-1} = S $ and so we can always consider $T$ as a left regular action of $G$.

\begin{lem}\label{lem:AbCayTninH}
    Let $G$ be a finite group with an abelian subgroup $H$ of index $2$. Suppose $X = \Cay(G,S)$ admits PST and the PST involution is $T$. If $T = L_s$ for some $s\notin H$, then $\langle R(H), L_s \rangle \leq \Aut(X)$ is an abelian regular subgroup of order $|G|$. In particular, $X$ is a Cayley graph for the abelian group $H\times C_2$. 
\end{lem}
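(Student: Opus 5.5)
The plan is to reduce the statement to Lemma~\ref{lem:abelian-index-two}, with $g = s$. That lemma needs two things: $sSs^{-1} = S$, and $s^2 = 1_G$ (the latter to get the isomorphism type $H\times C_2$).

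First, the normalizing condition. By Theorem~\ref{thm:vxtr-pst}, the PST involution $T$ lies in the centre of $\Aut(X)$. In particular $T$ is an automorphism of $X$. Since $T = L_s$, and $L_s\in\Aut(X)$ holds exactly when $sSs^{-1}=S$ (the computation $(sy)(sx)^{-1} = s(yx^{-1})s^{-1}$ from the proof of Lemma~\ref{lem:abelian-index-two}), we get that $s$ normalizes $S$. As $s\notin H$, Lemma~\ref{lem:abelian-index-two} then shows that $A=\langle R(H), L_s\rangle$ is an abelian regular subgroup of $\Aut(X)$ of order $|G|$.

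Second, the isomorphism type. Theorem~\ref{thm:vxtr-pst} also says that $T$ has order two, so $L_{s^2} = L_s^2 = T^2$ is the identity. Evaluating at $1_G$ gives $s^2 = 1_G$. The final clause of Lemma~\ref{lem:abelian-index-two} then gives $A\cong H\times C_2$, and Sabidussi's theorem shows $X$ is a Cayley graph of $H\times C_2$.

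There is no real obstacle here. The only point needing care is that the hypotheses of Lemma~\ref{lem:abelian-index-two} are read off from properties of $T$: membership in $\Aut(X)$ and order two. Both come directly from Theorem~\ref{thm:vxtr-pst}, so PST is used only through that theorem.
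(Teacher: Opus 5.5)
Your proposal is correct and follows essentially the same route as the paper: you read off $sSs^{-1}=S$ from $T=L_s\in\Aut(X)$ and $s^2=1_G$ from $T$ being an involution, then apply Lemma~\ref{lem:abelian-index-two} with $g=s$. The paper's proof does exactly this. The fact $s\neq 1_G$, which the paper also records, is automatic in your version from $s\notin H$.
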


\proof The PST involution $T = L_s$ lies in $\Aut(X)$, so $sSs^{-1} = S$,
and $s\notin H$ by hypothesis.  Since $T$ is an involution with no fixed
points, $s^2 = 1_G$ and $s\neq 1_G$.  Lemma~\ref{lem:abelian-index-two},
applied with $g = s$, gives that $\langle R(H),L_s\rangle$ is an abelian
regular subgroup of $\Aut(X)$ of order $|G|$, isomorphic to $H\times C_2$. \qed

We get the following as a  corollary.

\begin{cor}\label{cor:odd-centre}
    Let $G$ be a finite group with an abelian subgroup $H$ of index $2$, and suppose that the centre of $G$ contains no involution. If the Cayley graph $X = \Cay(G,S)$ admits PST, then $S$ consists of a single involution and $X$ is a perfect matching. In particular, no connected Cayley graph $X = \Cay(G,S)$ admits PST.
\end{cor}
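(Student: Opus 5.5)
The plan is to reduce to a single connected component and then use the known classification of perfect state transfer on abelian Cayley graphs. The hypotheses should force every component to be $K_2$.

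\emph{Step 1: the arithmetic of $G$.} First I would show $|H|$ is odd, so $|G|\equiv 2\pmod 4$. Suppose instead that $H$ has even order. Then $\Omega=\{h\in H: h^2=1\}$ is a nontrivial elementary abelian $2$-group, and it is characteristic in $H$, hence normal in $G$. Pick $g\in G\setminus H$. Conjugation by $g$ acts on $\Omega$, and $g^2\in H$ centralizes $\Omega$. So $\langle g\rangle$ acts on $\Omega$ through a group of order at most $2$. A $2$-group acting on a nontrivial $2$-group fixes a non-identity element, so some involution of $\Omega$ is fixed by $g$. It is also centralized by the abelian group $H$, so it lies in $Z(G)$, contradicting the hypothesis. (In particular $G$ is non-abelian: an abelian $G$ of even order has a central involution.)

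\emph{Step 2: restrict to components.} Put $K=\langle S\rangle$. Since $x\sim y$ iff $y\in Sx$, the components of $X$ are the right cosets $Kx$, each isomorphic to $\Cay(K,S)$. The adjacency matrix of $X$ is block diagonal, so $U(\tau)$ is block diagonal too. Hence PST in $X$ gives PST in $Y=\Cay(K,S)$, which is connected. If $K\le H$, then $|K|$ is odd, which is impossible for a vertex-transitive graph with PST by Theorem~\ref{thm:vxtr-pst}. So $K\not\le H$.

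In that case $K_0=K\cap H$ is an abelian subgroup of index two in $K$, of odd order, so $|K|=2|K_0|$ with $|K_0|$ odd. Apply the discussion preceding Lemma~\ref{lem:AbCayTninH} to $Y$. Its PST involution is $L_s$ for some $s\in K$ with $sSs^{-1}=S$ and $s^2=1\ne s$. Since $|K_0|$ is odd, $s\notin K_0$. Lemma~\ref{lem:AbCayTninH} then shows that $Y$ is a Cayley graph of the abelian group $K_0\times C_2$, whose order is $\equiv 2\pmod 4$.

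\emph{Step 3: the main obstacle.} It remains to show that a connected Cayley graph of an abelian group $B\times C_2$, with $|B|$ odd, admitting PST must be $K_2$. My first attempt would be to invoke the characterization of PST on abelian Cayley graphs \cite{TanFenCao2019} (or, in the cyclic case, \cite{basic2013char}). These force the order to be divisible by $4$ except in the trivial case of order $2$.

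If a self-contained argument is preferred, I would use characters and the standard $2$-adic criterion for PST. Let $z$ be the unique involution, so PST must occur from $x$ to $xz$. For every pair of characters $\chi,\psi$ with $\chi(z)\ne\psi(z)$, the eigenvalue difference $\lambda_\chi-\lambda_\psi$ must have a fixed $2$-adic valuation $\nu$. Pairs with $\chi(z)=\psi(z)$ must have valuation strictly larger than $\nu$.

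Now compare $\chi$ with $\chi\chi_0$, where $\chi_0$ is trivial on $B$ and $\chi_0(z)=-1$. This gives
\[
\lambda_\chi-\lambda_{\chi\chi_0}=2\sum_{a\in S\cap Bz}\chi(a).
\]
Averaging these conditions over the Galois orbits of the characters of the odd-order group $B$ should force $S\cap B=\emptyset$ and $|S\cap Bz|=1$. I expect this parity bookkeeping to be the delicate point.

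\emph{Step 4: conclusion.} Once $Y\cong K_2$, we have $|K|=2$ and $K=\langle s\rangle$, so $S=\{s\}$ with $s$ an involution. Therefore $X$ is a perfect matching, and no connected Cayley graph of $G$ admits PST, since $|G|>2$.
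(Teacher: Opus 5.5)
Your proposal is correct, and it differs from the paper's proof in two places.

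\textbf{Showing $H$ has no involution.} The paper does this inside the connected case. It uses the map $\varphi(h)=h^{-1}\iota(h)$, proves $H=Z(G)\times\varphi(H)$ with $\iota$ inverting $\varphi(H)$, and reads off that $H$ has no involution. You instead let $\langle g\rangle$ act on $\Omega=\{h\in H:h^2=1\}$ through a group of order at most two and count fixed points modulo $2$. Your argument is shorter. The paper's yields the decomposition $G\cong Z(G)\times(\varphi(H)\rtimes C_2)$, which it reuses in Remark~\ref{rem:mod4-dichotomy}.

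\textbf{The disconnected case.} This is the more substantive difference. The paper disposes of it by citing \cite[Theorem~7.1]{arnadottir2022pst}. You instead pass to the component $\Cay(K,S)$ with $K=\langle S\rangle$ and rerun the connected argument there, with $K\cap H$ as the abelian subgroup of index two. This works precisely because you proved the stronger fact that $|H|$ is odd at the outset. Oddness is inherited by $K\cap H$, whereas the corollary's hypothesis that $Z(G)$ has no involution need not pass to $K$ (for instance $K=\langle s\rangle\cong C_2$). So your version is self-contained apart from the abelian classification \cite{basic2013char,TanFenCao2019}, which both proofs invoke in the same way for the final step.

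\textbf{Your optional Step~3.} It is only sketched, but it closes without delicate bookkeeping. Write $\psi(T)=\sum_{t\in T}\psi(t)$, $S_0=S\cap B$, $S_1=\{b\in B: bz\in S\}$ and $\lambda_{\psi,\pm}=\psi(S_0)\pm\psi(S_1)$ for $\psi\in\widehat{B}$. The cross-class differences include $\lambda_{\psi,+}-\lambda_{\psi,-}=2\psi(S_1)$, so their common $2$-adic valuation is at least $1$. Hence same-class differences are divisible by $4$, and every $\lambda_{\psi,+}$ is congruent modulo $4$ to $\lambda_{\psi_0,+}$, where $\psi_0$ is the trivial character. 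Now apply Fourier inversion on $B$; since $|B|$ is odd, it is a $2$-adic unit. This shows that $\mathbf{1}_{S_0}+\mathbf{1}_{S_1}$ is congruent modulo $4$ to a multiple of the indicator of the identity. As it takes values in $\{0,1,2\}$, it vanishes off the identity, so $S\subseteq\{z\}$.
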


\proof First observe that $G$ is non-abelian: the order of $G$ is even, so if $G$ were abelian, then $Z(G) = G$ would contain an involution by Cauchy's theorem. 

Suppose first that $X$ is connected and, for a contradiction, that it admits PST.  By Lemma~\ref{lem:centralizer-magic}, the PST involution is $T = L_s$ for some $s\in G$ with $s^2 = 1$ and $s \neq 1_G$.  To apply Lemma~\ref{lem:AbCayTninH} we must check $s\notin H$, and it suffices to show $H$ contains no involution.

Pick $g\in G\setminus H$ and let $\iota:H\rightarrow H$ be given by $\iota(h) = ghg^{-1}$ for $h\in H$; this is well defined since index $2$ subgroups are normal, and $\iota^2 = \mathrm{id}$ since $g^2\in H$ and $H$ is abelian.  An element of $H$ fixed by $\iota$ commutes with $g$ and also with the elements of $H$, hence is a central element; conversely, every central element lies in $H$ and is fixed by $\iota$.  Thus the fixed points of $\iota$ are precisely the elements of $Z(G)$.  Now let $\varphi: H\rightarrow H$ be given by $\varphi(h) = h^{-1}\iota(h)$.  Since $H$ is abelian, $\varphi$ is a homomorphism, and its kernel is the set of fixed points of $\iota$, which is $Z(G)$. 
Consider the image, $\varphi(H)$ of $\varphi$. For $h\in H$ we have
\[
\iota\bigl(\varphi(h)\bigr) = \iota(h)^{-1}\,\iota^2(h) = \iota(h)^{-1} h = \varphi(h)^{-1},
\]
so $\iota$ acts on $\varphi(H)$ as inversion.  If $x\in Z(G)\cap \varphi(H)$, then $x = \iota(x) = x^{-1}$, so $x^2 = 1_G$, and since $Z(G)$ contains no involution, $x = 1_G$.  By the first isomorphism theorem $|H| = |Z(G)|\,|\varphi(H)|$, and we conclude that $H = Z(G)\times \varphi(H)$.

Now suppose $h\in H$ satisfies $h^2 = 1_G$, and write $h = zx$ with $z\in Z(G)$ and $x\in \varphi(H)$.  Then $z^2 = 1_G$ and $x^2 = 1_G$.  Since $Z(G)$ contains no involution, $z = 1_G$.  Since $\iota$ acts on $\varphi(H)$ as inversion, $\iota(x) = x^{-1} = x$, so $x$ is a fixed point of $\iota$ and hence $x\in Z(G)\cap \varphi(H) =\{1_G\}$.  Thus $h = 1_G$ and $H$ contains no involution.

Therefore, $s\notin H$, and Lemma~\ref{lem:AbCayTninH} applies and $X$ is a Cayley graph of $H\times C_2$.  Since $H$ has no involution, it has odd order.  By the classification of perfect state transfer on circulants and abelian Cayley graphs \cite{basic2013char,TanFenCao2019}, a connected abelian Cayley graph on $N>2$ vertices admitting perfect state transfer satisfies $4\mid N$.  But since $|H|$ is odd, this gives $|G| = 2|H| \equiv 2\pmod 4$, a contradiction. 

If $X$ is not connected, the result follows from \cite[Theorem 7.1]{arnadottir2022pst}. \qed

Together with the graphs of Section~\ref{sec:kminuspm}, the corollary gives
a complete existence criterion for the groups in question.

\begin{thm}\label{thm:mod4-dichotomy}
Let $G$ be a non-abelian group with an abelian subgroup of index two.  Then $G$
has a connected Cayley graph admitting perfect state transfer if and only if
$4$ divides $|G|$.
\end{thm}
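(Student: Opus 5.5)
We prove the two directions separately; both reduce to results already in hand.

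For the direction ``if $4\mid|G|$'', we use the graphs of Section~\ref{sec:kminuspm}. Since $|G|$ is even, Cauchy's theorem gives an involution $h\in G$. By Lemma~\ref{lem:H2n}, $\Cay(G,G\setminus\{1,h\})$ is $H_{|G|}$, the complete graph $K_{|G|}$ with a perfect matching deleted. It is connected as soon as $|G|\ge 4$, which holds here. By \cite{TsoPleVeg2008} (equivalently the circulant characterization of \cite{basic2013char}), $H_N$ admits perfect state transfer whenever $N\equiv 0\pmod 4$. Perfect state transfer depends only on the graph, so this Cayley graph of $G$ has it. Note that $h$ need not be central; by Lemma~\ref{lem:H2n} the presentation may simply fail to be conjugacy-closed, which is irrelevant here.

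For the direction ``only if'', suppose $|G|\equiv 2\pmod 4$. By Corollary~\ref{cor:odd-centre}, it suffices to show that $Z(G)$ contains no involution. Let $H$ be the abelian subgroup of index two; then $|H|=|G|/2$ is odd. Lemma~\ref{lem:centre-in-H} gives $Z(G)\le H$, and a group of odd order has no involution. Hence $Z(G)$ contains no involution, and Corollary~\ref{cor:odd-centre} shows that no connected Cayley graph of $G$ admits perfect state transfer. This proves the contrapositive.

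The argument is therefore mostly bookkeeping, and the only step needing care is the parity of $|H|$. Since $[G:H]=2$, we have $|G|=2|H|$, so $4\nmid|G|$ with $|G|$ even forces $|H|$ to be odd. The only other point to check is that $H_4=C_4$ is connected in the smallest case. That case never arises here anyway, because a non-abelian group has order at least $6$, and the smallest non-abelian group of order divisible by four has order $8$.
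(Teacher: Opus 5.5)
Your proof is correct and follows essentially the same argument as the paper. For $4\nmid|G|$ you use that $|H|$ is odd, $Z(G)\le H$ by Lemma~\ref{lem:centre-in-H}, and then Corollary~\ref{cor:odd-centre}; for $4\mid|G|$ you use the connected graph $H_{|G|}=\Cay(G,G\setminus\{1,h\})$ from Lemma~\ref{lem:H2n} together with \cite{TsoPleVeg2008}.
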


\proof Let $H$ be the abelian subgroup of index two, so that $|G| = 2|H|$ is
even.  Suppose first that $4\nmid |G|$; then $|H|$ is odd, and since
$Z(G)\le H$ by Lemma~\ref{lem:centre-in-H}, the centre of $G$ has odd order and
therefore contains no involution.  By Corollary~\ref{cor:odd-centre}, no
connected Cayley graph of $G$ admits perfect state transfer.

For the converse, suppose that $4\mid |G|$, and write $|G| = 2n$ where $n$ is even.
By Lemma~\ref{lem:H2n} the graph $H_{2n}$ obtained from $K_{2n}$ by deleting a
perfect matching is a Cayley graph of $G$; it is connected, and it admits
perfect state transfer because $n$ is even \cite{TsoPleVeg2008}. \qed

\begin{rem}\label{rem:mod4-dichotomy}
For a non-abelian group $G$ with an abelian subgroup $H$ of index two, the hypothesis of Corollary~\ref{cor:odd-centre} is a condition on the order of the group: the centre of $G$ contains no involution if and only if $|G|\equiv 2\pmod 4$.  Indeed, the proof of the corollary shows that if $Z(G)$ contains no involution then $|H|$ is odd, and conversely, if $|H|$ is odd then $Z(G)\le H$ has odd order.  The proof also describes the groups in question.  We have $H = Z(G)\times \phi(H)$, where $\iota$ acts on $\phi(H)$ as inversion.  Since $g^2$ is fixed by $\iota$, we see that $g^2\in Z(G)$, and since squaring is a bijection on the odd-order group $Z(G)$, we may replace $g$ by $gz$ for a suitable $z\in Z(G)$ so that $g^2 = 1_G$.  Then $G \cong Z(G)\times (\phi(H)\rtimes C_2)$, the direct product of its centre with the generalized dihedral group over $\phi(H)$.  The smallest of these groups that is not itself generalized dihedral is $C_3\times D_6$, of order $18$.
Theorem~\ref{thm:mod4-dichotomy} is consistent with the census: among the
  graphs of Appendix~\ref{app:census} on more than two vertices, exactly one has
  order congruent to $2$ modulo $4$, namely the $30$-vertex graph $T(6)[K_2]$,
  and it is not a  Cayley graph of any group.
\end{rem}

\begin{rem}\label{rem:Heawood}
The PST hypothesis in Lemma~\ref{lem:AbCayTninH} cannot be dropped: a
connected Cayley graph of a group with trivial centre and an abelian
subgroup of index two need not be a Cayley graph of an abelian group at
all. The Heawood graph is a Cayley graph over $D_{14}$  and its
automorphism group, $\PGL(2,7)$, contains no element of order $14$,
hence no regular subgroup isomorphic to $C_{14}$, the unique abelian
group of order $14$.
\end{rem}

If $T = L_s$ with $s\in H$, no
  general reduction is possible: the graphical regular representations of  Section~\ref{sec:grrs} all have $T = R_z = L_z$ for a central involution $z\in H$, and no abelian Cayley presentation. For generalized dihedral groups, however, a theorem of Morris and Smol\v{c}i\'c forces an abelian presentation from a condition on the connection set alone, without requiring perfect state transfer; we state it in our notation and include a short proof.

\begin{lem}[{cf.\ Morris--Smol\v{c}i\'c \cite[Theorem~2.3]{morris2021two}}]\label{lem:gendihedral}
Let $G\cong H\rtimes C_2$ be a generalized dihedral group, where $H$ is abelian and $C_2 = \langle b\rangle$, and let $X=\Cay(G,S)$ with $S = S_0\cup bS_1$, where $S_0,S_1\subseteq H$.  If $S_0$ and $S_1$ are both inverse-closed, then $\langle R(H),L_b\rangle\cong H\times C_2$ is a regular subgroup of $\Aut(X)$; in particular, $X$ is a Cayley graph of the abelian group $H\times C_2$.
\end{lem}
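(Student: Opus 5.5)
The plan is to reduce everything to Lemma~\ref{lem:abelian-index-two} applied with $g=b$. Since $b\notin H$ and $b^2=1_G$, that lemma gives the full conclusion, namely that $\langle R(H),L_b\rangle$ is an abelian regular subgroup of $\Aut(X)$ isomorphic to $H\times C_2$, as soon as we know that $bSb^{-1}=S$. So the whole proof is the verification that $b$ normalizes $S$.

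I would check this on the two pieces of $S$ separately, using that in a generalized dihedral group $bhb^{-1}=h^{-1}$ for every $h\in H$.

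\textbf{First piece.} For $S_0\subseteq H$ we get $bS_0b^{-1}=S_0^{-1}$, and this equals $S_0$ because $S_0$ is inverse-closed.

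\textbf{Second piece.} For $s\in S_1$ we compute
\[
b(bs)b^{-1}=b^2sb^{-1}=sb=b(b^{-1}sb)=bs^{-1}.
\]
Hence $b(bS_1)b^{-1}=bS_1^{-1}=bS_1$, again by inverse-closure, now of $S_1$.

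Together these give $bSb^{-1}=S$, so $L_b\in\Aut(X)$, and Lemma~\ref{lem:abelian-index-two} finishes the argument.

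I would also remark why the hypotheses are natural. Since $S$ must be inverse-closed and every element $bh$ is an involution ($(bh)^2=bhbh=h^{-1}h=1$), $bS_1$ is automatically inverse-closed. So the genuine extra hypothesis is the inverse-closure of $S_1$ itself. Likewise, $S_0$ is automatically inverse-closed because $S$ is.

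I do not expect a real obstacle. The only point needing care is the conjugation computation for the coset elements, which uses $b^{-1}=b$ and $b^{-1}sb=s^{-1}$; this is exactly where inverse-closure of $S_1$ enters.
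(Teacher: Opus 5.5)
Your proposal is correct and follows essentially the same route as the paper: verify $bSb^{-1}=S_0^{-1}\cup bS_1^{-1}=S$ piece by piece using that $b$ inverts $H$, then apply Lemma~\ref{lem:abelian-index-two} with $g=b$ (using $b\notin H$, $b^2=1_G$). Your closing observation that inverse-closure of $S_0$ is automatic while that of $S_1$ is the real hypothesis matches the remark the paper places immediately after the lemma.
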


\proof Recall that $b$ inverts $H$.  For $s_0\in S_0$ we have
$bs_0b^{-1} = s_0^{-1}$, and for $s_1\in S_1$ we have
$b(bs_1)b^{-1} = b\,(bs_1b^{-1}) = bs_1^{-1}$, so
\[
    bSb^{-1} = S_0^{-1}\cup bS_1^{-1}.
\]
The two pieces lie in $H$ and in $bH$ respectively, so $bSb^{-1} = S$ if and
only if $S_0$ and $S_1$ are both inverse-closed, which holds by hypothesis.
Since $b\in G\setminus H$ and $b^2 = 1_G$, Lemma~\ref{lem:abelian-index-two}
applied with $g = b$ gives that $\langle R(H),L_b\rangle$ is a regular subgroup
of $\Aut(X)$ isomorphic to $H\times C_2$. \qed

 \begin{rem}
  The hypothesis on $S_0$ follows from $S$ being inverse-closed, but the
  hypothesis on $S_1$ does not, and Section~\ref{sec:dih48} shows that the
  condition on $S_1$ cannot be dropped.
  \end{rem}

This lemma shows that many known examples of non-abelian Cayley graphs admitting PST are in fact statements about abelian Cayley graphs, and thus an instance of the classification in \cite{TanFenCao2019}, which predates \cite{WanFen2023-bicayley}. 
We use the lemma to show that the graphs of the dihedral PST family of Wang and Feng, whose connection sets are not conjugacy-closed, are in fact abelian Cayley graphs  in the next example.

\begin{ex}\label{ex:wangfeng}
Fix $m\ge1$ and let $G$ be the dihedral group of order $16m$, with cyclic subgroup $H=\langle a\rangle$ of order $8m$ and a reflection $b$. Wang and Feng \cite[Example~5.1]{WanFen2023-bicayley} show that the Cayley graph $X = \Cay(G,S)$ with
\[
    S \;=\; \{\,a^{2j-1} : 1\le j\le 4m\,\}\;\cup\;\{\,ba^{2m},\ ba^{6m}\,\}
\]
is a connected $(4m+2)$-regular graph on $16m$ vertices and has perfect state transfer from $x$ to $xa^{4m}$ at time $\pi/2$.  The set $S$ is not closed under conjugation: conjugating $ba^{2m}$ by $a$ gives $ba^{2m-2}\notin S$.  So $X$ is not a conjugacy-closed Cayley graph of $G$, and Lemma~\ref{lem:nonabel-normal-2} does not apply.

We can apply Lemma~\ref{lem:gendihedral}.  In its notation, $S_0 = \{a^{2j-1} : 1\le j\le 4m\}$ and $S_1 = \{a^{2m},\,a^{6m}\}$, and both are inverse-closed: the inverse of an odd power of $a$ is an odd power, and $(a^{2m})^{-1} = a^{6m}$.  Hence $X$ is a Cayley graph of the abelian group $\langle R(H),L_b\rangle\cong C_{8m}\times C_2$.  Tracing the proof of Lemma~\ref{lem:abelian-index-two} makes the abelian presentation explicit: write $C_{8m}\times C_2 = \langle c\rangle\times\langle z\rangle$ and identify $R(a^t)L_b^{\eps}$ with $c^tz^{\eps}$.  The element $c^tz^{\eps}$ moves the vertex $1_G$ to $b^{\eps}a^t$, so the connection set is
\[
    S' \;=\; \{\,c^t : t \text{ odd}\,\}\;\cup\;\{\,c^{2m}z,\ c^{6m}z\,\},
\]
and the transfer $x\mapsto xa^{4m}$ becomes translation by the involution $c^{4m}$.  We verified in SageMath for $m=1,2$, exactly over $\Q(i)$, that $\Cay(G,S)\cong\Cay(C_{8m}\times C_2,\,S')$ and that $U(\pi/2) = -T$ where $T$ is that translation.  (For $m=1$ this is a $6$-regular graph on $16$ vertices with spectrum $\{6,2,0,-2,-6\}$.)  So the perfect state transfer of \cite[Example~5.1]{WanFen2023-bicayley} is a statement about an abelian Cayley graph, that is, an instance of the classification in \cite{TanFenCao2019}.
\end{ex}

  \begin{rem}\label{rem:some-families}
The same treatment disposes of every explicit dihedral, dicyclic and
  generalized dihedral PST family in \cite{CaoFen2021,WanCao2024,WanFen2023-bicayley}.
  The papers \cite{CaoFen2021,WanCao2024} work exclusively with conjugacy-closed  connection sets. Their characterizations (\cite[Theorems~3.1
  and~3.2]{CaoFen2021}, \cite[Theorems~3.2 and~3.3]{WanCao2024}) all have
  the hypothesis $gSg^{-1}=S$, and so, by Lemma~\ref{lem:nonabel-normal-2},
every graph satisfying this hypothesis is a 
  Cayley graph of an abelian group, regardless of whether or not it has PST.  
 The generalized dihedral family \cite[Example~5.2]{WanFen2023-bicayley} over
  $C_{8m}\times C_2 = \langle a\rangle\times\langle h\rangle$ with
  connection set $\{a^{2j-1}\}\cup\{a^{4m}h,\,b\}$ and stated there
  without proof, has $S_0 = \{a^{2j-1}\}\cup\{a^{4m}h\}$ and
  $S_1 = \{1\}$, both inverse-closed, so Lemma~\ref{lem:gendihedral}
  applies exactly as in Example~\ref{ex:wangfeng}. 
  \end{rem}

\begin{rem}
The recent characterization of PST on Cayley graphs over $V_{8n} = \langle a,b \mid a^{2n} = b^4 = 1,\ ba = a^{-1}b^{-1},\ b^{-1}a = a^{-1}b\rangle$ in \cite{KalBha2024} is also stated exclusively for conjugacy-closed connection sets. 
The element $b^2$ is central and $H = \langle a\rangle\times\langle b^2\rangle \cong C_{2n}\times C_2$ is abelian of index $2$, so by Lemma~\ref{lem:nonabel-normal-2} every graph considered by \cite{KalBha2024} is in fact a Cayley graph of the abelian group $C_{2n}\times C_2 \times C_2$.  \end{rem}

\begin{rem}
  Cao and Feng ask, crediting their referee, to ``determine whether a Cayley
  graph over a dihedral group is isomorphic to a Cayley graph over a cyclic
  group'' \cite[Open question~1]{CaoFen2021}. 
  In general it need not be, as the Heawood graph shows (Remark~\ref{rem:Heawood}). Isomorphic Cayley graphs over different groups, and dihedral groups in particular, are an active line of research; see \cite{Joseph1995,Morris1999,morris2021two,MorrisSkelton}.
  Within the scope of \cite{CaoFen2021}, where
  the connection set is closed under conjugation,
  Lemma~\ref{lem:nonabel-normal-2} shows that every such graph is a Cayley
  graph of $C_n\times C_2$, which is cyclic when $n$ is odd.  Moreover, every
  explicit example constructed in \cite{CaoFen2021,WanCao2024} has connection
  set containing the full reflection coset $b\langle a\rangle$, and any such
  Cayley graph of a dihedral or dicyclic group is a circulant. 
  In particular, the two isomorphisms verified by computer in
  \cite[Section~5]{CaoFen2021} are instances of this, and every perfect state
  transfer example of \cite{CaoFen2021,WanCao2024} lies within the circulant
  classification of \cite{basic2013char}.  Assuming perfect state transfer,
  the example of Section~\ref{sec:dih48} shows that from $48$ vertices
  onwards, a dihedral Cayley graph with perfect state transfer need not be a
  Cayley graph of any abelian group.  Finally, abelian cannot be improved to
  cyclic: the graphs of Example~\ref{ex:wangfeng} are Cayley graphs of
  $C_{8m}\times C_2$ whose automorphism groups contain no element of order
  $16m$ (verified for $m=1,2$), so they are not circulants.
  \end{rem}

We now turn our attention to extraspecial groups. A $p$-group $G$ is called \textsl{extraspecial} if its centre, $Z:=Z(G)$, has order $p$ and moreover,
\[Z=G'=\Phi(G)\]
where $G'=[G,G]$ is the commutator subgroup and $\Phi(G)$ is the Frattini subgroup. In this case, $G/Z(G)$ is elementary abelian and $|G|=p^{2n+1}$ for some $n\geq 1$.
  
The following lemma is the extraspecial case of the wreath-product
description of the group association scheme of a class-two Camina
$p$-group; see, for example, \cite{Bag2019} and
\cite[Corollary~5.9]{BasHum2025}. We include a direct proof,
which also identifies the resulting elementary abelian connection set.

\begin{lem}\label{lem:extraspecial}
    Let $G$ be an extraspecial $p$-group of order $p^{2n+1}$ and let $S$ be a conjugacy-closed connection set.  Then $X = \Cay(G,S)$ is a Cayley graph of the elementary abelian $p$-group, $\Z_p^{2n+1}$.
  \end{lem}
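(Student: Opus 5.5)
The plan is to use the conjugacy-class structure of an extraspecial group to show that adjacency in $X$ depends only on two things: the cosets of $Z=Z(G)$ containing the endpoints, and, when the endpoints lie in the same coset, their quotient in $Z$. Once that is established, I will relabel the vertices by $V\times Z$ with $V=G/Z$ and read off an elementary abelian connection set. Unlike the index-two reductions, this construction does not pass through Lemma~\ref{lem:abelian-index-two}. It is a direct relabelling.

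\emph{Step 1: conjugacy classes.} Since $G'=Z$ is central, $G$ has class two, so the commutator is bimultiplicative. For fixed $g$, the map $h\mapsto [g,h]$ is therefore a homomorphism $G\to Z$. If $g\notin Z$, this homomorphism is nontrivial, and since $|Z|=p$ it is surjective. Hence the conjugacy class of $g$ is $\{g[g,h]\}=gZ$, the full coset. Central elements form singleton classes. Consequently a conjugacy-closed $S$ has the form
\[
S \;=\; S_0\;\cup\;\pi^{-1}(T),
\]
where $\pi:G\to V$ is the quotient map, $S_0\subseteq Z\setminus\{1\}$, and $T\subseteq V\setminus\{0\}$. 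Because $S$ is inverse-closed, both $S_0$ and $T$ are inverse-closed. I expect this step to be the only point needing care, namely that non-central classes are \emph{entire} cosets of $Z$. It uses exactly the defining properties $Z=G'$ and $|Z|=p$.

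\emph{Step 2: relabelling.} Recall that $V$ is elementary abelian of rank $2n$, since $Z=\Phi(G)$. Choose a transversal $\{t_v : v\in V\}$ of $Z$ in $G$, and define the bijection $\psi: G\to V\times Z$ by $t_vz\mapsto (v,z)$. Take $x=t_vz$ and $y=t_wz'$.
\begin{itemize}
\item If $v=w$, then $yx^{-1}=z'z^{-1}\in Z$, so $x\sim y$ if and only if $z'z^{-1}\in S_0$.
\item If $v\neq w$, then $yx^{-1}$ is non-central with image $w-v$ in $V$ (written additively). By Step~1, $x\sim y$ if and only if $w-v\in T$.
\end{itemize}

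\emph{Step 3: conclusion.} It follows that $\psi$ is an isomorphism from $X$ to the graph
\[
\Cay\bigl(V\times Z,\; (\{0\}\times S_0)\,\cup\,(T\times Z)\bigr).
\]
The new connection set is inverse-closed and avoids the identity because $S_0$ and $T$ are inverse-closed and avoid the identity. Finally, $V\times Z\cong \Z_p^{2n}\times\Z_p=\Z_p^{2n+1}$, which proves the lemma.
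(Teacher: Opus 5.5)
Your proposal is correct and follows essentially the same route as the paper. The paper likewise shows that each non-central conjugacy class is a full coset of $Z$, using the homomorphism $x\mapsto xgx^{-1}g^{-1}$ (your commutator map $h\mapsto[g,h]$). It then relabels the vertices through a transversal, $cg_i\mapsto(Zg_i,c)$, which yields the same connection set on $G/Z\times Z$: the paper's $(S_0\times Z)\cup(\{1\}\times(S\cap Z))$ is your $(T\times Z)\cup(\{0\}\times S_0)$ with the roles of the names swapped.
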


\proof Let $Z = Z(G)$; then $|Z| = p$ and $G/Z$ is elementary abelian of order $p^{2n}$, and let $\pi: G\to G/Z$ be the quotient map. We first show that every non-central conjugacy class of $G$ is a full fibre of $\pi$.  Let $g\in G\sm Z$ and define $f: G\to Z$ by $f(x) = xgx^{-1}g^{-1}$.  The values of $f$ lie in $G' = Z$, and $f$ is a homomorphism because $G$ has nilpotency class two, that is, because $G' = Z$ is central: for $x,y\in G$,
\[
    f(xy) \;=\; x\,ygy^{-1}\,x^{-1}g^{-1} \;=\; x\,f(y)g\,x^{-1}g^{-1} \;=\; f(x)\,f(y),
\]
where the second equality substitutes $ygy^{-1} = f(y)g$ and the third holds because $f(y)$ is central.  The image of $f$ is then a subgroup of $Z\cong\Z_p$, and it is nontrivial since $g$ is not central, so $f(G) = Z$.  Since $xgx^{-1} = f(x)g$, the conjugacy class of $g$ is $f(G)g = Zg$, as claimed.

Consequently, $S\sm Z$ is a union of fibres: $S\sm Z = \pi^{-1}(S_0)$, where $S_0 = \pi(S\sm Z)$; note that $1\notin S_0$, because the fibre of $1$ is $Z$.  Define $T = (S_0\times Z)\,\cup\,(\{1\}\times(S\cap Z))$.  Then $T$ is inverse-closed and avoids the identity, since $S$ is inverse-closed and $1\notin S$.  We will show that
\[ X \cong \Cay\bigl(G/Z\times Z,\ T\bigr) =: Y,\]
implying that $\Cay(G,S)$ is a Cayley graph for the abelian group $G/Z\times Z\cong \Z_p^{2n+1}$.

Choose coset representatives $1=g_0,\dots, g_k \in G$, so that every element of $G$ may be written uniquely as $cg_i$ for some $c\in Z$, and the map $cg_i\mapsto (Zg_i,c)$ is a bijection from $G$ to $G/Z\times Z$.  Since $Z$ is central, the vertices $ag_i$ and $bg_j$ are adjacent in $X$ if and only if $ag_i(bg_j)^{-1} = ab^{-1}g_ig_j^{-1}\in S$, and their images are adjacent in $Y$ if and only if
\[(Zg_i,a)(Zg_j,b)^{-1} \;=\; (Zg_ig_j^{-1},\ ab^{-1})\;\in\; T.\]

If $i=j$, then $ag_i(bg_j)^{-1} = ab^{-1}\in Z$ and $(Zg_ig_j^{-1}, ab^{-1}) = (1, ab^{-1})$.  Since $1\notin S_0$, both conditions say that $ab^{-1}\in S\cap Z$: within a fibre, adjacency is determined by $S\cap Z$.

If $i\neq j$, then $ab^{-1}g_ig_j^{-1}\notin Z$, so it lies in $S$ if and only if it lies in $S\sm Z = \pi^{-1}(S_0)$, that is, if and only if $Zg_ig_j^{-1}\in S_0$: between distinct fibres, membership in $S$ depends only on the image in $G/Z$.  On the other side, $Zg_ig_j^{-1}\neq 1$, so $(Zg_ig_j^{-1}, ab^{-1})\in T$ if and only if $Zg_ig_j^{-1}\in S_0$; the second coordinate in $S_0\times Z$ is unrestricted.  The two conditions agree.

In both cases $ag_i(bg_j)^{-1}\in S$ if and only if $(Zg_i,a)(Zg_j,b)^{-1}\in T$, so the map $cg_i\mapsto (Zg_i,c)$ is an isomorphism from $X$ to $Y$. \qed

Considering the case where $p=2$, we immediately obtain the following corollary.
\begin{cor}\label{cor:extrasp2}
    A conjugacy-closed Cayley graph of an extraspecial $2$-group is cubelike.\qed
\end{cor}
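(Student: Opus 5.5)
The statement to prove is Corollary~\ref{cor:extrasp2}: a conjugacy-closed Cayley graph of an extraspecial $2$-group is cubelike. The plan is to specialize Lemma~\ref{lem:extraspecial} to $p=2$ and then unpack what the abelian presentation produced there looks like.

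Let $G$ be extraspecial of order $2^{2n+1}$ and let $S$ be conjugacy-closed. Lemma~\ref{lem:extraspecial} gives an isomorphism
\[
\Cay(G,S)\;\cong\;\Cay\bigl(G/Z\times Z,\;T\bigr),
\]
where $Z=Z(G)$ has order $2$ and $G/Z$ is elementary abelian of order $2^{2n}$. Hence $G/Z\times Z\cong C_2^{2n}\times C_2=C_2^{2n+1}$. By the definition of a cubelike graph as a Cayley graph of $C_2^d$, the graph is cubelike with $d=2n+1$.

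The only point to check is that $T$ is a legitimate connection set in $C_2^{2n+1}$, i.e.\ inverse-closed and not containing the identity. This is already part of the lemma's proof, and in an elementary abelian $2$-group inverse-closure is automatic anyway.

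Nothing here is an obstacle; the content is entirely in Lemma~\ref{lem:extraspecial}. If desired, one can also describe $T$ explicitly: its elements are $(S_0\times Z)$, where $S_0=\pi(S\setminus Z)$, together with $(1,z)$ when the central involution $z$ lies in $S$.
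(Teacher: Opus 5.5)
Your proposal is correct and matches the paper's argument: the corollary is just Lemma~\ref{lem:extraspecial} with $p=2$, so the graph is a Cayley graph of $G/Z\times Z\cong C_2^{2n+1}$ and hence cubelike by definition. Your explicit description of $T$ is also consistent with the lemma, since $Z=\{1,z\}$ and $1\notin S$ give $S\cap Z\subseteq\{z\}$.
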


  \begin{rem}\label{rem:sinsorci}
  Sin and Sorci \cite{sin2020} construct perfect state transfer on Cayley
  graphs of extraspecial $2$-groups of order $2^{2n+1}$, with connection
  sets built from partial spreads of $G/Z(G)$.  Their connection sets are
  unions of conjugacy classes, so by Corollary~\ref{cor:extrasp2} every
  graph in this family is cubelike, and these examples fall within the older cubelike framework of \cite{bernasconi, cheung, chan2013complex}.
  We note that
  Corollary~\ref{cor:extrasp2} is not a case of the earlier results of
  this section: for $n\ge 2$ an extraspecial $2$-group has no abelian
  subgroup of index two.  
  \end{rem}

\section{Graphs with no abelian regular subgroup}\label{sec:counterexamples}

In Section~\ref{sec:abind2} we considered groups with an abelian subgroup of index two and saw how their Cayley graphs have an abelian structure under any of the three hypotheses: a conjugacy-closed connection set (Lemma~\ref{lem:nonabel-normal-2}); a PST involution $L_s$ with $s\notin H$ (Lemma~\ref{lem:AbCayTninH}, and Corollary~\ref{cor:odd-centre} where this is automatic); or the symmetry hypothesis on $S_1$ in the generalized dihedral case (Lemma~\ref{lem:gendihedral}).
In this section, we show that these hypotheses are necessary by providing examples of Cayley graphs for a group with an abelian subgroup of index two that are not Cayley for an abelian group. 

Our source is the census of connected vertex-transitive graphs on fewer than $48$ vertices \cite{RoyleHolt2020VTGraphs}, which we scanned for perfect state transfer for $10\le n\le 30$ and partially at $n=32$; for every hit we enumerated the conjugacy classes of regular subgroups of $\Aut(X)$ in GAP. For graphs on at most $9$ vertices, we enumerated directly using SageMath. Since the graphs are necessarily integral,
each PST claim is certified by an exact computation of $U(\tau)$ over $\Q(i)$, and each ``no abelian regular subgroup'' claim by a completed subgroup enumeration.

We start with some notable facts revealed by the scan of the census.

\begin{enumerate}
  \item Every connected vertex-transitive graph on at most $22$ vertices with perfect state transfer is a Cayley graph of an abelian group.
  \item On $24$ vertices, there are $14$ graphs with perfect state transfer that are Cayley graphs of only non-abelian groups: their regular subgroups are precisely $\{S_4,\ C_2\times A_4\}$ or  $\{\SL(2,3),\ S_4\}$, they have degrees $9\le k\le 14$, and they all have PST at time $\pi/2$.  
  None of these three groups has an abelian subgroup of index two, so these examples are consistent with the lemmas above.  \item On $30$ vertices, there is a vertex-transitive graph with perfect state transfer that is not a Cayley graph: the lexicographic product $T(6)[K_2]$, where $T(6)$ is the line graph of $K_6$.  Its automorphism group, of order $2^{15}\cdot 720$, contains no regular subgroup (all four groups of order $30$ were checked), and the transfer, at time $\pi/2$, is between the twin pairs.  Within the census this is the smallest vertex-transitive non-Cayley graph with perfect state transfer.  Note that $30\equiv 2\pmod 4$: vertex-transitive graphs escape the divisibility condition that governs the abelian case  \cite{basic2013char,TanFenCao2019,arnadottir2022pst}.
\end{enumerate}

These items underline the theme of this paper: perfect state transfer is not a Cayley phenomenon, let alone an abelian or non-abelian one.  The published constructions of PST graphs happen to be abelian Cayley, but there are genuinely non-abelian examples starting at $24$ vertices.

\subsection{Graphical regular representations}\label{sec:grrs}

We say that $X$ is a \textsl{graphical regular representation (GRR)}  of $G$
if $\Aut(X)$ is regular and isomorphic to $G$. Since the automorphism group of a Cayley graph for a group $G$ always contains $G$ as a subgroup, a GRR has the smallest possible automorphism group that a Cayley graph can have. Moreover, it is exclusively a Cayley graph for that group and no others.  A GRR of a non-abelian group which admits perfect
state transfer is thus the strongest possible counterexample to any hoped-for
reduction to the abelian case.

Such graphs do exist. There are at least $344$ vertex-transitive graphs on $32$ vertices with degree $k \le 12$  that admit
perfect state transfer; of these, there are $38$ that have non-abelian automorphism groups of order $32$.
The groups occurring are the following.

\begin{table}[h]
    \centering
\renewcommand{\arraystretch}{1.2}
\begin{tabular}{@{}rlll@{}}
\toprule
\#\,graphs & $\Aut(X)$ & GAP ID & abelian index-$2$ subgroup \\
\midrule
$28$ & $C_2\times C_2\times D_8$ & $(32,46)$ & $C_2^4$ \\
$7$  & $C_2\times QD_{16}$       & $(32,40)$ & $C_2\times C_8$ \\
$3$  & $C_2\times D_{16}$        & $(32,39)$ & $C_2\times C_8$ \\
\bottomrule
\end{tabular}
    \caption{Groups of order 32 admitting non-abelian GRRs with PST (resulting from a partial scan).}
    \label{tab:GRRgroups}
\end{table}

  The degrees of these graphs are $k\in\{9,10,11,12\}$, and in every case
  the transfer time is $\pi/2$.  Each of the three groups has an abelian
  subgroup of index two but is itself non-abelian.  To our knowledge these
  are the first recorded examples of non-abelian graphical regular
  representations admitting perfect state transfer.

For a representative, let $X$ be the graph on $32$ vertices whose graph6
string is given below.\footnote{In graph6 format,
\Verb[fontsize=\tiny]"_E_@eAaPOgoCGOk?G@O[s?IcbDAPKCobDL@G_?mKPWIGyc?Jh?oHCANFOCo?A^?_F_OcGoTKGI?PwKaJC@B_".}
It is $9$-regular with spectrum
\[
    \{9,\ 5,\ 3^{(6)},\ 1^{(7)},\ (-1)^{(8)},\ (-3)^{(7)},\ (-5)^{(2)}\}
\]
and $\Aut(X)\cong C_2\times QD_{16}$.  Since the spectrum is integral, the transition matrix at $\pi/2$ can be evaluated exactly over $\Q(i)$ from the spectral idempotents and we get $U(\pi/2) \;=\; i\,P$, where $P$ is the permutation matrix of a fixed-point-free central involution of $\Aut(X)$. Therefore, $X$ has perfect state transfer at $\pi/2$. 
We give the connection set explicitly: write
\[
    C_2\times QD_{16} \;=\; \langle z\rangle \times \langle r,s \mid r^8 = s^2 = 1,\ srs = r^3\rangle.
\] 
Then $X = \Cay(C_2\times QD_{16}, S)$ with
\[
    S \;=\; \{r,\ r^{-1}\} \;\cup\; \{z,\ zr^2,\ zr^{-2}\} \;\cup\;
            \{s,\ r^2s,\ zs,\ zr^4s\},
\]
which is inverse-closed of size nine.  The perfect state transfer involution is $T = zr^4$, and the transfer is from $x$ to $zr^4x$ at time $\pi/2$. 

 We now look at these examples in the context of Section~\ref{sec:abind2}.  Each of the three groups in Table~\ref{tab:GRRgroups} has an abelian subgroup  $H$ of index two, listed in the last column of the table, and each graph admits
  perfect state transfer but is a Cayley graph of no abelian group, since its  only regular subgroup is its own non-abelian automorphism group.  
  By  Lemma~\ref{lem:abelian-index-two}, no element of $G\setminus H$ normalizes the  connection set, so all three hypotheses of Section~\ref{sec:abind2} fail here; these graphs show that none of them can be dropped.  Concretely, the perfect  state transfer involution is $T = R_z$ for a central involution $z$, and  $Z(G)\le H$ by Lemma~\ref{lem:centre-in-H}, so $T$ lies in $H$ and  Lemma~\ref{lem:AbCayTninH} does not apply.  Moreover, since $T$ lies in the  centre of $\Aut(X)$ and here $\Aut(X)\cong G$, the group $G$ does not have  trivial centre.

\subsection{A dihedral non-abelian example}\label{sec:dih48}

Here we will present an example of a Cayley graph for a dihedral group which has PST and is not Cayley for an abelian group. This shows that in Lemma~\ref{lem:gendihedral}, the condition on the connection set is necessary. 

Consider the dihedral group of order 48,
\[D_{48} = \langle a,b: a^{24}=b^2 =1, bab = a^{-1}\rangle,\]
and define $S=S_0\cup bS_1$ where 
\begin{align*}
S_0 &= \{a^{\pm2},a^{\pm10}\}\quad\text{and}\\
S_1 &= \{1,a,a^2,a^4,a^7,a^{12},a^{13}, a^{14}, a^{16}, a^{19}\}.
\end{align*}
The Cayley graph $X=\Cay(D_{48},S)$ is 14-regular, with spectrum 
\[\{14,6^{(5)},2^{(5)},0^{(24)},-2^{(6)},-6^{(7)}\}\] 
and it has perfect state transfer from $x$ to $xa^{12}$ at time $\pi/2$. The automorphism group of $X$ has order $2^{28}\cdot 3$, and so verifying that it has no abelian regular subgroups is non-trivial. We will instead look at a certain quotient graph and make use of the following lemma.

\begin{lem}\label{lem:quotientCay}
    Let $G$ be a group, let $S\subseteq G\setminus\{1\}$ be inverse-closed and let $H\le G$.  The left cosets of $H$ form an equitable partition of $\Cay(G,S)$.  If moreover $H$ is normal in $G$, then the quotient graph for this partition is the Cayley graph $\Cay(G/H, \bar S)$, where $\bar S = \{sH : s\in S\}\setminus\{H\}$.
\end{lem}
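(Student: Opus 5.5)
The plan is to get the equitable partition from an automorphism group rather than by counting directly. Consider the subgroup $R(H)=\{R_h : h\in H\}\le R(G)\le\Aut(X)$, where $X=\Cay(G,S)$. Its orbit on a vertex $x$ is $\{xh : h\in H\}=xH$, so the orbits of $R(H)$ are exactly the left cosets of $H$.

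Recall the standard fact that the orbit partition of any group of automorphisms is equitable. The argument: if $\sigma$ fixes each cell setwise and maps $x$ to $x'$ inside a cell $C$, then $\sigma$ carries the neighbours of $x$ in a cell $D$ bijectively onto the neighbours of $x'$ in $D$. Applying this with $\sigma\in R(H)$ proves the first statement. As a cross-check I would also note the explicit count. Since $y\sim x$ exactly when $y=sx$ for some $s\in S$, the number of neighbours of $x$ in the coset $yH$ is
\[
\bigl|\{s\in S : sx\in yH\}\bigr|.
\]
Replacing $x$ by $xh$ replaces $sx$ by $sxh$, and $sxh\in yH$ iff $sx\in yH$, so this count does not depend on the chosen representative $x$.

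Now assume $H\trianglelefteq G$, so that left and right cosets coincide and $G/H$ is a group. Then $sx\in yH$ iff $sH=yx^{-1}H$ in $G/H$. Hence the number of neighbours of $x$ in $yH$ is $|S\cap yx^{-1}H|$, which depends only on the element $(yH)(xH)^{-1}=yx^{-1}H$ of $G/H$.

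In the quotient graph, two distinct cells $xH\neq yH$ are adjacent exactly when this count is positive, that is, when $yx^{-1}H=sH$ for some $s\in S$ with $sH\neq H$. Equivalently, $(yH)(xH)^{-1}\in\bar S$, which is precisely the adjacency rule of $\Cay(G/H,\bar S)$ under the paper's convention. It remains to check that $\bar S$ is a legitimate connection set: it excludes the identity $H$ by definition, and it is inverse-closed because $(sH)^{-1}=s^{-1}H$ and $S$ is inverse-closed.

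The only delicate point is what "quotient graph" means when the partition is equitable but the multiplicities $|S\cap gH|$ exceed one, or when $S\cap H\neq\emptyset$ produces loops. I would state explicitly that the quotient is the simple graph on the cells in which distinct cells are joined when the multiplicity is nonzero. The removal of $H$ from $\bar S$ accounts for the edges inside cells, and the multiplicities are recorded as the weights $|S\cap gH|$ if a weighted quotient is wanted.
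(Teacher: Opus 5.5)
Your proof is correct and follows the paper's argument. In both, the left cosets are the orbits of $R(H)\le\Aut(\Cay(G,S))$ and hence form an equitable partition. For normal $H$, distinct cells $xH\neq yH$ are joined exactly when some $s\in S$ lies in $yx^{-1}H\neq H$, i.e.\ when $(yH)(xH)^{-1}\in\bar S$, and $\bar S$ is inverse-closed and excludes $H$. Your explicit neighbour count $|S\cap yx^{-1}H|$ and your remark on multiplicities and loops are extras that the paper leaves implicit.
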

\proof The cells $gH$ are exactly the orbits of $R(H)\le\Aut(\Cay(G,S))$, and an orbit partition of a group of automorphisms is equitable.  For the second statement, two cells $g_1H$ and $g_2H$ are joined by an edge in the quotient graph $Y$ if and only if $g_2h'(g_1h)^{-1}\in S$ for some $h,h'\in H$. 
If $H$ is normal, this is equivalent to $g_2g_1^{-1}H\in \bar S$, that is, $(g_2H)(g_1H)^{-1}\in\bar S$.  Finally $\bar S$ is inverse-closed because $S$ is, and $H\notin\bar S$ by construction, so $Y = \Cay(G/H,\bar S)$. \qed

Let $H = \langle a^{12}\rangle \leq D_{48}$. Since $a^{12}$ is central in $D_{48}$, $H$ is a normal subgroup. The quotient graph of $X$ over the cosets of $H$, which we call $Y$, is a graph on $24$ vertices and its graph6 string is given below\footnote{In graph6 format \Verb"WqUdAcKB?cCG_HOE_Bg@[?TcAh_Ib?TCdTCKiaBBp@AUaAO"
}. This graph has automorphism group of order 48, and it is easy to check that none of its regular subgroups are abelian.

  Suppose $X$ were a Cayley graph of an abelian group; that is, suppose
  $\Aut(X)$ contained an abelian regular subgroup $A$.  Identify the
  vertices of $X$ with the elements of $A$, so that $A$ acts as its right
  regular representation.  An abelian regular subgroup is its own
  centralizer in $\Sym(V(X))$: by Lemma~\ref{lem:centralizer-magic} the
  centralizer of the right regular action is the left regular action, and
  for an abelian group the two actions coincide.  The PST involution $T$
  lies in the centre of $\Aut(X)$ by Theorem~\ref{thm:vxtr-pst}, so $T$
  centralizes $A$, and therefore $T\in A$.  Since $A$ is regular, $T$ is
  translation by an involution $z\in A$, and the orbits of $T$ are the
  cosets of $\langle z\rangle$ in $A$.  On the other hand, the perfect
  state transfer at time $\pi/2$ is from $x$ to $xa^{12}$, so $T$ maps
  each vertex $x$ to $xa^{12}$ and its orbits are the cosets of $H$.  The
  two coset partitions are therefore the same partition of $V(X)$, and by
  Lemma~\ref{lem:quotientCay}, applied to $A$ and its normal subgroup
  $\langle z\rangle$, the quotient graph over this partition, which is
  $Y$, is a Cayley graph of the abelian group $A/\langle z\rangle$.  This
  contradicts the computation above, and so $X$ is not a Cayley graph of
  any abelian group.

\begin{rem}
    If we replace the dihedral group in our example with the dicyclic group of order 48,
    \[\langle a,b: a^{24}=1, b^2 =a^{12}, b^{-1}ab = a^{-1}\rangle,\]
    and take the same connection set, then we get the same Cayley graph, $X$.
\end{rem}

\subsection{General linear group examples of Pantangi and Sin}\label{sec:pansin}
In this section, we show that  the infinite family of Pantangi and Sin over $\SL(2,q)$, $q$ odd, are examples of Cayley graphs of exclusively non-abelian groups with PST.

Pantangi and Sin \cite{PanSin2026} construct perfect state transfer on
Cayley graphs of the groups $\SL(2,q)$, $\GL(2,q)$ and
$\GU(2,q)$ for odd prime powers $q$, with connection sets that
are unions of conjugacy classes; the state transfer is from $g$ to $-g$
\cite[Theorems~4.8, 4.15, 4.16]{PanSin2026}.  Here
$\GU(2,q)$ denotes the group preserving a non-degenerate
Hermitian form over $\FF_{q^2}$.  Throughout this section $q$ is odd,
$G$ is one of the three groups above, $t=-I$ is its central involution,
and $X$ denotes the corresponding graph.  For the family we focus on over
$\SL(2,q)$, the connection set is $\{t\}\cup S$, where $S$ is
the set of elements of order $p$ or $2p$, with
$p=\operatorname{char}\FF_q$; for the other two groups we write the
connection set as $S$ and refer to \cite{PanSin2026} for its
definition.  In all three cases $S$ is a union of conjugacy classes
and $S t=S$.

We call two vertices  $u,v$ \textsl{twins} if they have the same neighbours in $V\setminus \{u,v\}$. This definition has been chosen such that $u,v$ are twins if and only if the transposition $(uv)$ is an automorphism. Note that this terminology includes both adjacent and non-adjacent twins.  Since the automorphisms of a graph form a group and $(uv)(vw)(uv)=(uw)$, ``equal or twins'' is an equivalence relation; its classes are the \textsl{twin classes}, and each of them is a clique or an independent set, because the transpositions above take edges inside a class to edges.  For graphs $Y$ and $Z$, the \textsl{lexicographic product} $Y[Z]$ has  vertex set $V(Y)\times V(H)$, and $(y_1,z_1)$ is adjacent to $(y_2,z_2)$  if $y_1$ is adjacent to $y_2$ in $Y$, or $y_1=y_2$ and $z_1$ is adjacent  to $z_2$ in $Z$; equivalently, $A(Y[Z]) = A(Y)\otimes J + I\otimes A(Z)$. We mainly use $Y[K_2]$ and $Y[2K_1]$, which replace each vertex of $Y$ by an adjacent, respectively non-adjacent, pair of twins.
See Figure~\ref{fig:twin-quotient} for an illustration of this reduction.

\begin{figure}[t]
\centering
\begin{tikzpicture}[
  cross/.style={draw=black!45, line width=0.5pt},
  pair/.style={draw=myorange, line width=1.0pt},
  yedge/.style={draw=black!75, line width=0.7pt},
  blob/.style={draw=black!30, dashed, line width=0.5pt},
  vx/.style={circle, draw=black, fill=white, inner sep=1.5pt},
  lb/.style={inner sep=1.0pt, fill=white, fill opacity=0.85, text opacity=1, font=\small}]
  \coordinate (P1) at (0, 1.8);
  \coordinate (Pg) at (-2.1, -1.0);
  \coordinate (Ph) at (1.5, -1.0);
  \coordinate (Pk) at (4.0, 0.3);
  \coordinate (a1) at (0, 2.14);   \coordinate (a2) at (0, 1.46);
  \coordinate (b1) at (-2.1,-0.66);\coordinate (b2) at (-2.1,-1.34);
  \coordinate (c1) at (1.5,-0.66); \coordinate (c2) at (1.5,-1.34);
  \coordinate (d1) at (4.0,0.64);  \coordinate (d2) at (4.0,-0.04);
  \foreach \u/\v in {a1/b1,a1/b2,a2/b1,a2/b2, a1/c1,a1/c2,a2/c1,a2/c2, b1/c1,b1/c2,b2/c1,b2/c2, c1/d1,c1/d2,c2/d1,c2/d2}{
    \draw[cross] (\u) -- (\v);}
  \draw[pair] (a1)--(a2); \draw[pair] (b1)--(b2); \draw[pair] (c1)--(c2); \draw[pair] (d1)--(d2);
  \draw[blob] (P1) ellipse [x radius=0.32, y radius=0.66];
  \draw[blob] (Pg) ellipse [x radius=0.32, y radius=0.66];
  \draw[blob] (Ph) ellipse [x radius=0.32, y radius=0.66];
  \draw[blob] (Pk) ellipse [x radius=0.32, y radius=0.66];
  \foreach \p in {a1,a2,b1,b2,c1,c2,d1,d2}{ \node[vx] at (\p) {}; }
  \node[lb, anchor=east, xshift=-3pt] at (a1) {$1$};
  \node[lb, anchor=east, xshift=-3pt] at (a2) {$t$};
  \node[lb, anchor=east, xshift=-3pt] at (b1) {$g$};
  \node[lb, anchor=east, xshift=-3pt] at (b2) {$tg$};
  \node[lb, anchor=west, xshift=3pt, yshift=2pt] at (c1) {$h$};
  \node[lb, anchor=north, yshift=-3pt] at (c2) {$th$};
  \node[lb, anchor=west, xshift=3pt] at (d1) {$k$};
  \node[lb, anchor=west, xshift=3pt] at (d2) {$tk$};
  \node at (0.7, -2.1) {$X = Y[K_2]$};
  \draw[->, thick] (4.9, 1.5) -- (6.0, 1.5) node[midway, above, font=\small] {$\pi$};
  \coordinate (Q1) at (7.4, 1.35);
  \coordinate (Qg) at (6.35, -0.35);
  \coordinate (Qh) at (8.15, -0.35);
  \coordinate (Qk) at (9.4, 0.45);
  \draw[yedge] (Q1)--(Qg); \draw[yedge] (Q1)--(Qh); \draw[yedge] (Qg)--(Qh); \draw[yedge] (Qh)--(Qk);
  \foreach \q in {Q1,Qg,Qh,Qk}{ \node[vx] at (\q) {}; }
  \node[lb, anchor=south, yshift=3pt] at (Q1) {$\bar 1$};
  \node[lb, anchor=east, xshift=-3pt] at (Qg) {$\bar g$};
  \node[lb, anchor=north, yshift=-3pt] at (Qh) {$\bar h$};
  \node[lb, anchor=west, xshift=3pt] at (Qk) {$\bar k$};
  \node at (7.9, -1.6) {$Y$};
\end{tikzpicture}
\caption{Illustration of the twin reduction.  If every twin class of $X$ has size two and distinct classes are joined by all four possible edges or by none, then $X$ is the lexicographic product $Y[K_2]$ (adjacent classes, as drawn) or $Y[2K_1]$ (non-adjacent classes) of the quotient graph $Y$ over the classes.  For the graphs of \cite{PanSin2026} the classes are the pairs $\{g,tg\}$ with $t = -I$ (Lemma~\ref{lem:pansin-structure}, Appendix~\ref{app:pansin-glgu}), perfect state transfer runs from $g$ to $tg$ inside each class (orange), and $\Aut(X) = C_2\wr\Aut(Y)$ by Lemma~\ref{lem:twin-reduction}.}
\label{fig:twin-quotient}
\end{figure}

Kirkland, Monterde and Plosker \cite{KirMonPlo2023} prove that a graph admits perfect state transfer between $u_1, u_2$, a twin class of size exactly two, at time $\tau$ if and only if $U(\tau)_{u_1,u_1} = 0$.  The pairs $\{u_1,u_2\}$ below are twins in $Y[K_2]$ and in $Y[2K_1]$, and writing the entry $U(\tau)_{u_1,u_1} = 0$ of these lexicographic products in terms of the walk on $Y$ turns their criterion into the statement below. The even case of the final sentence is also a special case of a lexicographic product construction of Ge, Greenberg, Perez and Tamon \cite{GeGrePerTam2011}, and the odd case with $Y = K_m$ is the cocktail party example worked out in \cite{KirMonPlo2023}.

\begin{prop}[{\cite{GeGrePerTam2011,KirMonPlo2023}}]\label{prop:doubling}
    Let $Y$ be a graph and $u$ a vertex of $Y$, and let $\{u_1,u_2\}$ be the corresponding twin pair in $Y[K_2]$, respectively in $Y[2K_1]$.  Write $\gamma = U_Y(2\tau)_{u,u}$.  Then $Y[K_2]$ admits perfect state transfer from $u_1$ to $u_2$ at time $\tau$ if and only if $\gamma = -e^{-2i\tau}$, and $Y[2K_1]$ does so if and only if $\gamma = -1$.  In particular, if every eigenvalue of $Y$ is an even integer then $Y[K_2]$ has perfect state transfer at time $\pi/2$ between every twin pair, and if every eigenvalue of $Y$ is an odd integer then $Y[2K_1]$ does.
\end{prop}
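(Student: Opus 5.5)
The plan is to compute $U_{Y[K_2]}(\tau)$ and $U_{Y[2K_1]}(\tau)$ explicitly in terms of $U_Y$. Then we read off the condition of Kirkland, Monterde and Plosker \cite{KirMonPlo2023}, namely $U(\tau)_{u_1,u_1}=0$, which characterizes perfect state transfer inside a twin pair of size two. The pair $\{u_1,u_2\}$ is a twin class of size exactly two unless $u$ itself has a twin in $Y$. To avoid depending on that, I will instead compute $U(\tau)_{u_2,u_1}$ and $U(\tau)_{u_1,u_1}$ directly. Unitarity of the $2\times 2$ block structure then shows that $|U_{u_2,u_1}|=1$ if and only if $U_{u_1,u_1}=0$.

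Write the adjacency matrix of $Y[Z]$ as $A_Y\otimes J_2 + I\otimes A_Z$, where $A_Z=J_2-I_2$ for $K_2$ and $A_Z=0$ for $2K_1$. The key observation is that $J_2$ and $I_2$ commute and are simultaneously diagonalized by the projections $P=\tfrac12 J_2$ and $Q=I_2-\tfrac12 J_2$. On the range of $P$ we have $J_2=2$, so $A=2A_Y+\epsilon I$, where $\epsilon=1$ for $K_2$ and $\epsilon=0$ for $2K_1$. On the range of $Q$ we have $J_2=0$, so $A=-\epsilon I$. This gives
\[
U(\tau)=e^{i\epsilon\tau}\,U_Y(2\tau)\otimes P \;+\; e^{-i\epsilon\tau}\, I\otimes Q .
\]
Taking the $(u_1,u_1)$ and $(u_2,u_1)$ entries yields
\[
U(\tau)_{u_1,u_1}=\tfrac12\bigl(e^{i\epsilon\tau}\gamma+e^{-i\epsilon\tau}\bigr),\qquad
U(\tau)_{u_2,u_1}=\tfrac12\bigl(e^{i\epsilon\tau}\gamma-e^{-i\epsilon\tau}\bigr).
\]
Since $|\gamma|\le 1$, the second entry has modulus $1$ exactly when $e^{i\epsilon\tau}\gamma=-e^{-i\epsilon\tau}$, that is, when $\gamma=-e^{-2i\epsilon\tau}$. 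For $K_2$ this condition is $\gamma=-e^{-2i\tau}$, and for $2K_1$ it is $\gamma=-1$, as claimed. This is precisely the vanishing of the $(u_1,u_1)$ entry, which matches the criterion of \cite{KirMonPlo2023}.

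For the final sentence, take $\tau=\pi/2$, so that $\gamma=U_Y(\pi)_{u,u}=\sum_\theta e^{i\pi\theta}(E_\theta)_{u,u}$. If all eigenvalues $\theta$ are even integers, each $e^{i\pi\theta}=1$, so $\gamma=1=-e^{-i\pi}$, which is the $K_2$ condition. If all are odd integers, each $e^{i\pi\theta}=-1$, so $\gamma=-1$, which is the $2K_1$ condition. This holds for every vertex $u$, hence for every twin pair.

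The main obstacle is only bookkeeping: getting the ordering of the tensor factors right so that the $2\times2$ factor indexes the pair within each class, and getting the sign convention for $e^{\pm i\epsilon\tau}$ right. With $A_{K_2}=J-I$, the $Q$-eigenvalue is $-1$ and the $P$-eigenvalue shift is $+1$. No other difficulty is expected, since the spectral splitting argument is elementary.
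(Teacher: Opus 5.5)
Your proof is correct. Its core step is the same as the paper's: you express the walk on $Y[K_2]$ or $Y[2K_1]$ through $U_Y(2\tau)$.

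The difference is in how the conclusion is drawn. The paper computes only the diagonal entry $U(\tau)_{u_1,u_1}$. It then invokes the criterion of \cite{KirMonPlo2023}: perfect state transfer inside a twin class of size exactly two occurs if and only if $U(\tau)_{u_1,u_1}=0$. Because that criterion needs the class to have size two, the paper adds a separate remark (a third twin forces $|U(\tau)_{u_1,u_2}|\le 1/\sqrt2$) to justify stating the proposition for every $Y$.

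You instead split by the projections $P=\tfrac12J_2$ and $Q=I_2-\tfrac12J_2$ and obtain the full formula $U(\tau)=e^{i\epsilon\tau}U_Y(2\tau)\otimes P+e^{-i\epsilon\tau}I\otimes Q$. From it you read off $U(\tau)_{u_2,u_1}=\tfrac12\bigl(e^{i\epsilon\tau}\gamma-e^{-i\epsilon\tau}\bigr)$ directly. The triangle inequality with $|\gamma|\le 1$ then settles the equivalence with no hypothesis on twin classes. The paper's side remark becomes a consequence: if $u$ has a twin of the relevant kind, your formula together with the $1/\sqrt2$ bound shows that the condition on $\gamma$ must fail. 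Your route is self-contained, while the paper's ties the statement to the general theory of twins.

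One small inaccuracy appears in an aside that your argument never uses. The twin class of $\{u_1,u_2\}$ is larger than the pair exactly when $u$ has an \emph{adjacent} twin in $Y$ (for $Y[K_2]$), respectively a \emph{non-adjacent} twin (for $Y[2K_1]$). It is not enough for $u$ to have an arbitrary twin.
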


We note that perfect state transfer between twins occurs only when their twin class has size exactly two
\cite[Corollary~1]{KirMonPlo2023};  indeed, if $w$ were a third twin,
the transposition $(u_2\,w)$ would be an automorphism fixing $u_1$, so
$U(\tau)_{u_1,u_2}=U(\tau)_{u_1,w}$, and two entries of equal modulus
in a row of a unitary matrix have modulus at most $1/\sqrt2$.  The
proposition is nevertheless (correctly) stated for all $Y$: the twin class of $\{u_1,u_2\}$ is larger than the pair precisely when $u$
itself has an adjacent twin in $Y$, for $Y[K_2]$ (respectively, a
non-adjacent one, for $Y[2K_1]$); for such pairs the criterion on $\gamma$ then fails by the
equivalence established in the proof. 

The $30$-vertex example of Section~\ref{sec:counterexamples} is the case $Y = T(6)$, whose eigenvalues are $8, 2, -2$; the Shrikhande graph and $C_4$ are two more such $Y$, and $K_4[2K_1] = H_8$ recovers a graph from Section~\ref{sec:kminuspm}.

Every automorphism of a graph permutes the twin classes.  The next lemma turns this observation into a reduction tool for lexicographic products with $K_2$ and $2K_1$. The equality $\Aut(X)=C_2\wr\Aut(Y)$ is the case $Z\in\{K_2,\,2K_1\}$ of
  Sabidussi's theorem on automorphism groups of lexicographic products
  \cite{Sab1959}: the hypothesis that every twin class of $X$ has size two
  says precisely that no two vertices of $Y$ have the same closed
  (respectively open) neighbourhood.  We include a short proof to keep the
  treatment self-contained.

\begin{lem}\label{lem:twin-reduction}
Let $Y$ be a graph and let $X=Y[K_2]$ or $X=Y[2K_1]$.  If every twin
class of $X$ has size two, then $\Aut(X)=C_2\wr\Aut(Y)$, of order
$2^{|V(Y)|}\,|\Aut(Y)|$, and $X$ is a Cayley graph of an abelian group
if and only if $\Aut(Y)$ contains an abelian regular subgroup.
\end{lem}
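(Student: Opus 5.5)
The plan has two parts: first pin down $\Aut(X)$ as the wreath product, then use that description to transfer abelian regular subgroups in both directions between $X$ and $Y$.

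\emph{Automorphism group.} Write $V(X)=V(Y)\times\{1,2\}$, so the twin pairs are the fibres $P_y=\{(y,1),(y,2)\}$. By hypothesis these pairs are exactly the twin classes of $X$. Every automorphism preserves the twin relation, so every automorphism permutes the twin classes; this gives a homomorphism $\rho:\Aut(X)\to\Sym(V(Y))$. The image lies in $\Aut(Y)$. Indeed, for $y\neq y'$, the pairs $P_y$ and $P_{y'}$ are joined by all four edges if $y\sim y'$ in $Y$ and by none otherwise. So adjacency of twin classes in $X$ is exactly adjacency in $Y$, and automorphisms preserve it. The kernel of $\rho$ consists of permutations fixing every pair setwise; any independent choice of swaps within pairs preserves the edges. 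Hence the kernel is $C_2^{|V(Y)|}$. Conversely, each $\sigma\in\Aut(Y)$ lifts to $(y,i)\mapsto(y^\sigma,i)$, which is an automorphism of $X$, so $\rho$ is onto and split. This yields $\Aut(X)=C_2\wr\Aut(Y)$ of the stated order.

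\emph{If $\Aut(Y)$ has an abelian regular subgroup $B$.} Take the lift $\tilde B$ of $B$ and the central involution $\tau$ swapping every pair simultaneously. The element $\tau$ commutes with every lift, so $\langle\tilde B,\tau\rangle\cong B\times C_2$ is abelian. It is transitive, because $\tilde B$ is transitive on pairs and $\tau$ is transitive within each pair. A transitive abelian group is regular, so $X$ is a Cayley graph of $B\times C_2$ by Sabidussi's theorem.

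\emph{If $\Aut(X)$ contains an abelian regular subgroup $A$.} Set $B=\rho(A)\le\Aut(Y)$; it is abelian and transitive on $V(Y)$, hence regular. This is the only place where the hypothesis that twin classes have size exactly two is used beyond the automorphism group computation: it makes $\rho$ well defined with image in $\Aut(Y)$, and so transitivity of $A$ on $V(X)$ descends to transitivity of $\rho(A)$ on pairs. I expect no real obstacle beyond carefully checking that twin classes of size two force the block system. The only delicate point is the surjectivity and kernel description, which requires verifying that arbitrary swaps within pairs preserve adjacency in both $Y[K_2]$ and $Y[2K_1]$. Alternatively, this step can be cited from Sabidussi's lexicographic product theorem \cite{Sab1959}.
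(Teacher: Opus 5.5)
Your proposal is correct and follows the paper's proof essentially step for step. The fibres are the twin classes, and the induced map $\Aut(X)\to\Aut(Y)$ is a split surjection with kernel $C_2^{V(Y)}$. Abelian regular subgroups transfer upward by lifting and adjoining the global swap (which is not in the lift, since the lift fixes the second coordinate), and downward by projecting.
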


 \proof Write $V(X)=V(Y)\times\{1,2\}$ and call the sets
$F_y=\{(y,1),(y,2)\}$ \textsl{fibres}.  The two vertices of a fibre are
twins, so every twin class is a union of fibres; under the hypothesis
the twin classes are therefore exactly the fibres.  Every automorphism
of $X$ maps twin classes to twin classes, hence permutes the fibres,
and the induced permutation of $V(Y)$ is an automorphism of $Y$,
because two distinct fibres are joined by all four possible edges or by
none, according as the underlying vertices are adjacent in $Y$ or not.
This gives a homomorphism $\varphi\colon\Aut(X)\to\Aut(Y)$.  It is
surjective, since $a\in\Aut(Y)$ lifts to the automorphism
$(y,j)\mapsto(a(y),j)$, and its kernel consists of the maps fixing
every fibre setwise. Since the two vertices of a fibre are twins, these
maps (swapping the vertices in an arbitrary set of fibres, while fixing the rest) are all automorphisms, so the kernel is $C_2^{V(Y)}$ and
$\Aut(X)=C_2^{V(Y)}\rtimes\Aut(Y)=C_2\wr\Aut(Y)$.

If $A\le\Aut(Y)$ is abelian and regular, let $\widetilde A$ be its lift
fixing the second coordinate, and let $\sigma$ be the simultaneous swap
of the two vertices in every fibre.  Then $\sigma$ commutes with
$\widetilde A$, and $\langle\widetilde A,\sigma\rangle\cong A\times
C_2$ is abelian of order $|V(X)|$ and transitive, hence regular; so $X$
is a Cayley graph of the abelian group $A\times C_2$.  Conversely, if
$B\le\Aut(X)$ is abelian and regular, then $\varphi(B)\le\Aut(Y)$ is
abelian, and it is transitive because $B$ is; a transitive abelian
permutation group is regular (as noted in Section~\ref{sec:abind2}).\qed

For the remainder of the section we specialize to
$G=\SL(2,q)$ with $q\ge5$, whose graphs provide the infinite
family of exclusively non-abelian Cayley graphs with PST; the $\GL$ and $\GU$ families reappear at the
end of the section.  So $X=\Cay(G,\{t\}\cup S)$, where $S$ is the set
of elements of order $p$ or $2p$.  The elements of order $p$ are
exactly the $q^2-1$ non-identity matrices with both eigenvalues equal
to $1$, and the elements of order $2p$ are their negatives, so
$|S|=2(q^2-1)$ and multiplication by $t$ interchanges the two kinds.
Write $H:=\mathrm{PSL}(2,q)=G/\langle t\rangle$ and let
$\pi\colon G\to H$ be the quotient map.  Since $\pi$ identifies $s$
with $ts$, the set $\bar S:=\pi(S)$ consists of the $q^2-1$ elements of
order $p$ in $H$; it is closed under conjugation and inversion, and we
put
\[
    Y:=\Cay(H,\bar S).
\]
For $q=3$ the group $H=\mathrm{PSL}(2,3)\cong A_4$ is not simple, and indeed, $\bar S$ then consists of the eight elements of order
  $3$ in $A_4$, so $Y$ is the complete tripartite graph $K_{4,4,4}$, a
  circulant, and $X=Y[K_2]$ is a circulant as well: writing
  $C_{24}=\langle c\rangle$,
  \[
      X=\Cay(C_{24},\{c^k:3\nmid k\}\cup\{c^{12}\}).
  \]
Thus the hypothesis $q\ge 5$ in the remainder of the section is necessary.

\begin{lem}\label{lem:pansin-structure}
Let $q\ge5$.  The twin classes of $X$ are exactly the pairs
$\{g,tg\}$, and $X=Y[K_2]$.
\end{lem}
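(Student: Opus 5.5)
The plan has two parts. First I show that the pairs $\{g,tg\}$ are twins and that $X=Y[K_2]$. Then I show that no other twins exist, which is where $q\ge5$ enters.

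\emph{The pairs $\{g,tg\}$ and the identification $X=Y[K_2]$.} Write $C=\{t\}\cup S$ for the connection set. Since $t$ is central and $tS=S$, we get $tC=C\cup\{1\}\setminus\{t\}$. Hence for $x\notin\{g,tg\}$, the element $xg^{-1}$ lies in $C$ if and only if $x(tg)^{-1}=t\,xg^{-1}$ lies in $C$. So $g$ and $tg$ have the same neighbours outside the pair, and they are adjacent because $t\in C$. Now take distinct fibres $\{g,tg\}$ and $\{h,th\}$. The four differences are $hg^{-1}$ and $t\,hg^{-1}$, and none of them lies in $\{1,t\}$. Each of them is in $S$ exactly when $\pi(hg^{-1})\in\bar S$, because $\pi^{-1}(\bar S)=S$: the preimage of an element of order $p$ in $H$ consists of an element of order $p$ and one of order $2p$. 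So two distinct fibres are joined by all four edges or by none, according to adjacency in $Y=\Cay(H,\bar S)$. This gives $X=Y[K_2]$.

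\emph{No further twins.} Since fibres are twin pairs, every twin class is a union of fibres. It therefore suffices to show that no two distinct vertices of $Y$ have the same closed neighbourhood. Equivalently, the twin class in $X$ would be larger than a fibre exactly when some vertex $u$ of $Y$ has an adjacent twin, as remarked after Proposition~\ref{prop:doubling}. By vertex-transitivity, take the vertices $1$ and $s\in\bar S$ with $N[1]=N[s]$. Closed neighbourhoods in a Cayley graph are translates: $N[s]=(\bar S\cup\{1\})s$. The condition becomes $(\bar S\cup\{1\})s=\bar S\cup\{1\}$, so $\bar S\cup\{1\}$ is a union of right cosets of $\langle s\rangle$. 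Since $\bar S\cup\{1\}$ is closed under conjugation, the same holds for every conjugate of $s$. Hence $\bar S\cup\{1\}$ is stable under right multiplication by the subgroup $N$ generated by all conjugates of $s$, which is a nontrivial normal subgroup of $H$. For $q\ge5$ the group $H=\PSL(2,q)$ is simple, so $N=H$. Then $\bar S\cup\{1\}=H$, which is impossible because $|\bar S|+1=q^2<|H|=q(q^2-1)/2$ when $q\ge5$.

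\emph{The main obstacle.} This is the twin-exclusion step, and it is the one place where simplicity, and hence $q\ge5$, is used. I should double-check two points. One is the translate convention: with adjacency $g\sim h$ iff $hg^{-1}\in\bar S$, the neighbourhood of $s$ is $\bar S s$, a right translate. The other is the counting inequality at the end. With twin classes exactly the pairs, Lemma~\ref{lem:twin-reduction} then applies to $X$.
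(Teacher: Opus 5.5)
Your proof is correct, and it uses the same key idea as the paper. The right stabilizer of a conjugacy-closed closed-neighbourhood set containing $1$ is a normal subgroup contained in that set, so simplicity together with a size count ($q^2<|H|$, respectively $2q^2<|G|$) pins it down. The difference is the group in which you run this.

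The paper works upstairs in $G=\SL(2,q)$ with $T=S\cup\{1,t\}$. This forces it first to show that $\langle t\rangle$ is the only proper nontrivial normal subgroup of $\SL(2,q)$. That step uses simplicity of $\PSL(2,q)$, plus the observation that $G$ is generated by elements of odd order $p$ and so has no quotient of order two. The paper then shows that $1$ has exactly one adjacent twin, $t$. It excludes non-adjacent twins by the clique-or-independent-set property of twin classes. Only afterwards does it read off $X=Y[K_2]$ from the twin structure via Lemma~\ref{lem:quotientCay}.

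You instead prove $X=Y[K_2]$ first, by the direct four-differences computation. That part holds for every odd $q$, consistent with the paper's remark that $X=Y[K_2]$ also when $q=3$. You then push the twin question down to $Y$, where simplicity of $\PSL(2,q)$ applies directly. The normal-subgroup structure of $\SL(2,q)$ is never needed.

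The price is the reduction ``$Y[K_2]$ has twins in different fibres if and only if $Y$ has two distinct vertices with equal closed neighbourhoods''. You cite this from the remark after Proposition~\ref{prop:doubling}, but the paper only states it there. It is a two-line check, worth writing out:
\begin{itemize}
\item Suppose $(u,i)$ and $(v,j)$ are twins with $u\neq v$.
\item The vertex $(u,3-i)$ is a neighbour of $(u,i)$ distinct from $(v,j)$, so it must also be adjacent to $(v,j)$. This forces $u\sim v$.
\item Given $u\sim v$, equality of the remaining neighbourhoods is exactly $N_Y[u]=N_Y[v]$.
\end{itemize}

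One small slip: a set stable under right multiplication by $\langle s\rangle$ is a union of the cosets $x\langle s\rangle$. In the usual convention these are left cosets, not right cosets. This does not affect the argument.
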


\proof   First, note that $\SL(2,q)$ has precisely one proper, non-trivial normal subgroup for $q\geq 5$, namely its centre, $Z(G) = \langle t\rangle$. Indeed, if $M$ is a normal subgroup of $G$, then $\pi(M)$ is a normal subgroup of the simple group $H$  \cite[Section~7.7]{dixon1996}, so $\pi(M)$ is trivial or all of $H$. In the first case $M\le\ker\pi = \langle t\rangle$.   In the second  case $G = M\langle t\rangle$, so $M$ has index at most two in $G$; but  the elementary unipotent matrices generate $G$ and have odd order $p$,  so $G$ has no quotient of order two, and hence $M = G$.

Consider the set $T = S\cup\{1,t\}$. It is conjugacy-closed, and easy calculation shows that the set of elements that fix it, i.e.\ the set $N = \{g\in G: Tg = T\}$ is a normal subgroup of $G$. Further, $N$ is contained in $T$ because $1\in T$, and by definition,   $|T|=2q^2<q^3-q=|G|$ for $q\ge3$ so $G\sm N$ is non-empty and thus $N\neq G$. We see that $t\in N$, and now this implies that $N = \langle t\rangle.$ 

The neighbourhood of a vertex $g$ in $X$ is the set $(S\cup\{t\})g$. Therefore, two distinct vertices $g,h\in G$ are adjacent twins if and only if 
\[(S\cup\{t\})g \cup \{g\} = (S\cup\{t\})h \cup \{h\}.\]
Since $t$ is the only non-trivial element fixing $T$, this implies that $1$ has precisely one adjacent twin, namely $t$. It is clearly impossible for a vertex to have both an adjacent and a non-adjacent twin, and we have shown that $\{1,t\}$ is a twin class of $X$. By vertex-transitivity, this implies that $\{g,tg\}$ are the twin classes of $X$.

Since each pair $\{g,tg\}$ is a twin class, distinct pairs are joined by all four possible edges or by none, and $g$ is adjacent to $tg$. By Lemma~\ref{lem:quotientCay}, the quotient over the cosets of $\langle t\rangle$ is $Y$, and so $X=Y[K_2]$. \qed 

\begin{rem}\label{rem:pansin-parity}
Pantangi and Sin prove that the eigenvalues of $X$ belonging to the
characters $\chi$ of $G$ with $\chi(t)=\chi(1)$ (equivalently, the
characters of $H$) are congruent to $1$ modulo $4$
\cite[Section~4.3]{PanSin2026}.  Since $X=Y[K_2]$, such an eigenvalue
equals $2\eta+1$, where $\eta$ is the corresponding eigenvalue of $Y$.
Hence every eigenvalue of $Y$ is an even integer, and the perfect state
transfer in $X$ from $g$ to $tg$ at time $\pi/2$ is the even case of
Proposition~\ref{prop:doubling}.
\end{rem}

  The proof of Theorem~\ref{thm:pansin-sl} uses some notions from the theory of
  permutation groups, for which we refer to \cite[Sections~1.5 and~4.3]{dixon1996}:
  a transitive permutation group is \textsl{primitive} if it preserves no
  partition of the point set other than the two trivial ones; the \textsl{socle},
  $\soc(G)$, of a finite group $G$ is the subgroup generated by its minimal normal
  subgroups (normal subgroups that contain no non-trivial normal subgroup of $G$
  properly); and a primitive group $G\le\Sym(\Omega)$ is of \textsl{simple
  diagonal type} if $\soc(G)$ is a direct power $T^k$ of a non-abelian simple
  group with $k>1$ and the stabilizer in $T^k$ of a point of $\Omega$ is a
  diagonal subgroup of $T^k$.  We also use the fact that, for odd $q=p^f$, the
  outer automorphism group of $\mathrm{PSL}(2,q)$ has order $2f$, generated by
  the images of conjugation by a diagonal matrix with non-square determinant and
  of the map raising every matrix entry to the $p$-th power
  \cite[Sections~2.8 and~7.7]{dixon1996}.

\begin{lem}[{Dickson; see \cite[Theorem~II.8.27]{huppert}}]\label{lem:psl-abelian}
Let $q = p^f\ge5$ be odd and let $H = \mathrm{PSL}(2,q)$.  Every abelian subgroup of $H$ is an elementary abelian $p$-group of order dividing $q$, a cyclic group of order dividing $(q-1)/2$ or $(q+1)/2$, or a Klein four-group.  In particular, every abelian subgroup of $H$ has order at most $q$, and the orders of the abelian subgroups of $\mathrm{PSL}(2,5)$, $\mathrm{PSL}(2,7)$ and $\mathrm{PSL}(2,9)$ are $\{1,2,3,4,5\}$, $\{1,2,3,4,7\}$ and $\{1,2,3,4,5,9\}$, respectively.
\end{lem}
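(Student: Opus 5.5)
The statement is a citation of Dickson's classification of subgroups of $\mathrm{PSL}(2,q)$, so my plan is to derive it from that classification rather than reprove it from scratch. Dickson's theorem (\cite[Theorem~II.8.27]{huppert}) lists every subgroup of $H=\mathrm{PSL}(2,q)$, $q=p^f$ odd, up to conjugacy. The list consists of:
\begin{itemize}
\item elementary abelian $p$-groups of order dividing $q$, and their normalizers inside a Borel subgroup, of shape $\FF_q\rtimes C_{(q-1)/2}$;
\item cyclic groups of order dividing $(q\pm1)/2$;
\item dihedral groups of order dividing $q\pm1$;
\item $A_4$, $S_4$ and $A_5$ (under the usual congruence conditions);
\item $\mathrm{PSL}(2,p^e)$ and $\mathrm{PGL}(2,p^e)$ for $e\mid f$.
\end{itemize}

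Next I would go through this list and extract the abelian members.
\begin{itemize}
\item A subgroup of the Borel group $\FF_q\rtimes C_{(q-1)/2}$ that meets both the unipotent radical and the torus nontrivially is non-abelian. The reason is that a nontrivial torus element acts on $\FF_q$ by multiplication by a nontrivial square $\lambda^2$, which has no nonzero fixed points. So an abelian subgroup of the Borel group is either a $p$-group (hence elementary abelian of order dividing $q$) or meets the radical trivially. In the latter case it maps injectively into the cyclic torus, so it is cyclic of order dividing $(q-1)/2$.
\item A dihedral group is abelian only when it has order $2$ or $4$, giving $C_2$ or the Klein four-group.
\item The abelian subgroups of $A_4$, $S_4$ and $A_5$ are cyclic of order $2,3,4,5$ or Klein four-groups. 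Each such cyclic group sits in a cyclic torus of $H$, so it is already covered by the cyclic case. Order $4$ occurs only when $4\mid(q\pm1)/2$.
\item The subfield subgroups are non-abelian for $p^e\ge5$. Any abelian subgroup inside one of them is handled by the same analysis applied to that smaller group, by induction.
\end{itemize}
This gives the three types in the statement.

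For the bound, $(q\pm1)/2<q$ and the Klein four-group has order $4\le q$. So every abelian subgroup of $H$ has order at most $q$.

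The explicit lists then follow by arithmetic. For $q=5$ the types give orders dividing $5$, $2$, $3$, together with $4$, which yields $\{1,2,3,4,5\}$. For $q=7$ they give divisors of $7$, $3$, $4$, which yields $\{1,2,3,4,7\}$. For $q=9$ they give $1,3,9$ from the elementary abelian $3$-groups, divisors of $4$ and $5$ from the tori, and the Klein four-group, which yields $\{1,2,3,4,5,9\}$.

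The main obstacle is the Borel-subgroup case: an abelian subgroup must not mix unipotent and semisimple elements. I would settle this directly by the computation in the upper-triangular matrices of $\SL(2,q)$ modulo $\pm I$. There, conjugating $\left(\begin{smallmatrix}1&b\\0&1\end{smallmatrix}\right)$ by $\mathrm{diag}(\lambda,\lambda^{-1})$ gives $\left(\begin{smallmatrix}1&\lambda^2 b\\0&1\end{smallmatrix}\right)$, and $\lambda^2\neq1$ whenever the diagonal element is nontrivial in $H$.
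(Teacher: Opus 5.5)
Your proposal is correct and takes essentially the paper's route: the paper gives no proof beyond citing Dickson's classification (Huppert, Theorem~II.8.27), and reading off the abelian members of that list is what the citation amounts to, with your added check that an element of a Borel subgroup outside the unipotent radical acts on it without nonzero fixed points. The one slip is in the redundant $A_4,S_4,A_5$ bullet: a cyclic subgroup of order $p$, e.g.\ $C_3\le A_4\le\mathrm{PSL}(2,9)$, lies in a Sylow $p$-subgroup rather than in a torus, but it is already covered by your elementary abelian case.
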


We also use two results about primitive groups of simple diagonal type,
  which we state in the forms in which they are applied.  The first is the case of
  Praeger's inclusion theorem in which the smaller group has this type; it rests
  on the classification of finite simple groups.

  \begin{prop}[{Praeger \cite[Proposition~8.1]{Praeger1990}; stated in this form
  as \cite[Lemma~2.3]{KajMor2023}}]\label{prop:praeger-sd}
Let $G\le G_1\le\Sym(\Omega)$ with $|\Omega| = n$, where $G$ and $G_1$ are both
primitive and $G$ is of simple diagonal type.  Then either $G_1$ is $A_n$ or
$S_n$, or $G_1$ is of simple diagonal type with
$\soc(G_1) = \soc(G)$.
\end{prop}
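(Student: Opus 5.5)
Since this is a cited result that depends on the classification of finite simple groups, I would not reprove it in full. I would reconstruct Praeger's argument along O'Nan--Scott lines, as follows. Write $N=\soc(G)=T^k$ with $T$ non-abelian simple and $k>1$, so that $n=|T|^{k-1}$ and $N_\alpha$ is a diagonal copy of $T$. Put $M=\soc(G_1)$. I would run through the O'Nan--Scott types of the primitive group $G_1$ and show that every type except simple diagonal forces $G_1\in\{A_n,S_n\}$.

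\emph{Affine type.} This is immediate. A non-abelian simple group has order divisible by at least three primes, so $n=|T|^{k-1}$ is not a prime power.

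\emph{Twisted wreath type.} Here $M$ is regular of order $n$. Since $G\le G_1$ normalizes $M$, the group $G_\alpha\ge N_\alpha\cong T$ acts faithfully on $M$ by conjugation. Comparing composition factors and orders would rule this case out.

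\emph{Product action type.} Here $G_1\le H\wr S_l$ preserves a Cartesian decomposition $\Delta^l$ of $\Omega$. Then $G$ also preserves it. I would show this is incompatible with the diagonal structure: the stabilizer of a Cartesian decomposition would have to contain $N$, and projecting to a component would give a primitive group of degree $|\Delta|$ with $|\Delta|^l=|T|^{k-1}$, in which the diagonal point stabilizer cannot split.

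\emph{Almost simple type.} I expect this to be the main obstacle. Here $G_1\ge M$ is almost simple and $G_1=G\,(G_1)_\alpha$ is a factorization of an almost simple group in which one factor has the non-abelian composition factors $T^k$. I would invoke the Liebeck--Praeger--Saxl classification of maximal factorizations of almost simple groups, which is the CFSG-dependent step, together with the explicit list of primitive almost simple groups containing a transitive subgroup of this shape. This should force $M=A_n$.

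\emph{Simple diagonal type.} The remaining possibility is $M=S^m$ with $|S|^{m-1}=|T|^{k-1}$. I would then prove $M=N$. First, $G$ normalizes $M$ and $N$ is minimal normal in $G$, so $N\cap M$ is either trivial or all of $N$. The trivial case is excluded by comparing the actions of $G_\alpha\ge\operatorname{diag}(T)$ on the $m$ simple factors of $M$. Hence $N\le M$. Equality of orders then comes from the degree equation together with the fact that the point stabilizers in $N$ and $M$ are diagonal subgroups isomorphic to $T$ and $S$ respectively. This step is elementary once the earlier cases are disposed of.
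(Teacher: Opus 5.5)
The paper gives no proof of this statement. It quotes Praeger's Proposition~8.1 in the form of \cite[Lemma~2.3]{KajMor2023}, so citing it, as you propose, is exactly what the paper does. As a reconstruction, however, your outline has a step that fails: the product-action step.

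\emph{The product-action step.} The obstruction you propose, that the diagonal point stabilizer ``cannot split'' over a Cartesian decomposition, does not exist at the level of $N$. The socle of a simple diagonal group can preserve a nontrivial Cartesian decomposition with primitive components.
\begin{itemize}
\item Take $k=3$. The map $\operatorname{diag}(T)(t_1,t_2,t_3)\mapsto(t_3^{-1}t_1,\,t_3^{-1}t_2)$ identifies $\Omega$ with $T\times T$.
\item Under this identification, $(a_1,a_2,a_3)\in N$ acts by $(x,y)\mapsto(a_3^{-1}xa_1,\,a_3^{-1}ya_2)$.
\item So $N$ preserves the decomposition $T\times T$. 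Each component group is $T\times T$ acting on $T$ by two-sided multiplication, which is primitive with diagonal point stabilizer.
\item $N_\alpha$ is exactly the intersection $\{a_1=a_3\}\cap\{a_2=a_3\}$ of the two component stabilizers, so the stabilizer splits without difficulty.
\item What destroys the decomposition is an element of $G$ outside $N$. The automorphism interchanging the first and third factors acts as $(x,y)\mapsto(x^{-1},x^{-1}y)$.
\end{itemize}
So the contradiction has to come from $G$ permuting the $k$ simple factors of $N$ transitively, and your sketch never uses this. A workable version runs roughly as follows.
\begin{itemize}
\item The overgroups of $\operatorname{diag}(T)$ in $T^k$ are the partial diagonals attached to partitions of $\{1,\dots,k\}$.
\item Once one knows that $N$ fixes each component, an $N$-invariant decomposition with at least two nontrivial components makes the non-singleton blocks of these partitions into a hypertree on $\{1,\dots,k\}$ with at least two edges.
\item The vertices of such a hypertree do not all have the same degree, so no group acting transitively on the factors can preserve it.
\item The case $k=2$ with both factors normal in $G$ needs a separate argument.
\end{itemize}

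\emph{The twisted wreath step.} ``Comparing composition factors and orders'' decides nothing here. Composition factors of a subgroup are not controlled by those of the overgroup, and $(G_1)_\alpha$ can be large, so no order comparison gives a contradiction. The useful observation is that $G_1$ normalizes its regular socle $M=S^m$. Hence $G\le G_1\le N_{\Sym(\Omega)}(M)=\operatorname{Hol}(M)\cong\operatorname{Hol}(S)\wr S_m$ in product action, and this case is absorbed into the Cartesian one.

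\emph{The almost simple step.} This is where the CFSG-dependent work lies, and you rightly point to the Liebeck--Praeger--Saxl factorizations. But the case is only asserted (``this should force $M=A_n$''). As written, then, the outline settles only the affine case.

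\emph{Two smaller points in your last step.}
\begin{itemize}
\item The paper's definition of simple diagonal type allows $k=2$ with both factors normal in $G$. There $N$ is not minimal normal, so $N\cap M=T\times1$ must also be excluded.
\item Once $N\le M$, the relation $|M:N|=|M_\alpha:N_\alpha|=|S|/|T|$ is just the degree equation again. You still need a genuine argument that $N_\alpha=M_\alpha$.
\end{itemize}
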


The second fact describes the largest group in which the argument takes place.
Here $T\times T$ is identified with the group of permutations
$x\mapsto axb^{-1}$ of $T$, whose point stabilizer at $1$ is the diagonal
subgroup, so that any primitive group between $T\times T$ and its normalizer is
of simple diagonal type.

\begin{prop}[{see \cite[3.4]{Praeger1990}; \cite[Section~4.5]{dixon1996}}]\label{prop:sd-normalizer}
Let $T$ be a non-abelian simple group and let $D = N_{\Sym(T)}(T\times T)$.
Then $D = \langle\iota\rangle\ltimes\operatorname{Hol}(T)$, where
$\operatorname{Hol}(T) = \Aut(T)\ltimes T$ and $\iota\colon x\mapsto x^{-1}$.
Consequently $T\times T$ is normal in $D$ with
$D/(T\times T)\cong\Out(T)\times C_2$, the $C_2$ being generated by the image
of $\iota$; every element of $D$ either preserves or interchanges the two
direct factors of $T\times T$; and an element interchanges them if and only if
it lies outside $\operatorname{Hol}(T)$, that is, if and only if its image in
$D/(T\times T)$ lies outside $\Out(T)$.
\end{prop}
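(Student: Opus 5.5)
The statement to prove is Proposition~\ref{prop:sd-normalizer}: for non-abelian simple $T$, the normalizer $D$ of $T\times T$ in $\Sym(T)$ is $\langle\iota\rangle\ltimes\operatorname{Hol}(T)$. I would identify $T\times T$ with the group $W$ of permutations $x\mapsto axb^{-1}$, so that $W$ is generated by $L(T)$ and $R(T)$. By Lemma~\ref{lem:centralizer-magic}, $L(T)$ and $R(T)$ centralize each other. Because $Z(T)=1$, they intersect trivially, so $W=L(T)\times R(T)$ and $|W|=|T|^2$. The stabilizer of $1$ in $W$ is $\{x\mapsto axa^{-1}\}$, which is the diagonal.

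The first step is to pin down $L(T)$ and $R(T)$ as the two direct factors. In a direct product $T\times T$ with $T$ non-abelian simple, the only minimal normal subgroups are the two factors. Indeed, if $N$ is a minimal normal subgroup, project it to the first factor. If that projection is nontrivial, then $[N,T\times 1]$ is a nontrivial normal subgroup inside $N\cap(T\times1)$, which forces $N=T\times1$. Consequently every $\delta\in D$, acting on $W$ by conjugation, permutes $\{L(T),R(T)\}$. This gives a homomorphism $D\to C_2$.

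The second step is to compute the kernel $D_0$ of this homomorphism, which is $N_{\Sym(T)}(R(T))\cap N_{\Sym(T)}(L(T))$.
\begin{itemize}
\item The normalizer of the regular group $R(T)$ in $\Sym(T)$ is the holomorph $\Aut(T)\ltimes R(T)$. This is the standard argument: compose with a translation so that the element fixes $1$, then check that the conjugation action forces it to be an automorphism.
\item An automorphism $\alpha$ satisfies $\alpha L_a\alpha^{-1}=L_{\alpha(a)}$, so it normalizes $L(T)$ as well.
\end{itemize}
Hence $D_0=\operatorname{Hol}(T)$.

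The third step is to exhibit an element outside $D_0$. Inversion satisfies $\iota L_a\iota=R_{a^{-1}}$, so $\iota$ swaps the two factors. It follows that $D=\langle\iota\rangle\ltimes\operatorname{Hol}(T)$, and that an element swaps the factors exactly when it lies outside $\operatorname{Hol}(T)$.

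For the quotient, observe that
\[
\operatorname{Hol}(T)/W\cong \Aut(T)R(T)/\bigl(\Inn(T)R(T)\bigr)\cong\Out(T),
\]
using that $L(T)\le \Inn(T)R(T)$, since $L_a=c_aR_a$. Also, $\iota$ commutes with every automorphism, because $\alpha(x^{-1})=\alpha(x)^{-1}$, and $\iota\in W\cdot\iota$ has order $2$. Together these give $D/W\cong\Out(T)\times C_2$, with $\iota$ centralizing $\Aut(T)$ modulo $W$. The image of $\iota$ lies outside $\Out(T)$ because it swaps the factors, which settles the final clause.

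The main obstacle is the holomorph computation in the second step. Take $\sigma$ normalizing $R(T)$ with $1^\sigma=1$. Then $\sigma^{-1}R_h\sigma=R_{\phi(h)}$ for some automorphism $\phi$. Evaluating at $1$ gives $h^\sigma=\phi(h)$, so $\sigma=\phi\in\Aut(T)$. This is the one place where care with conventions is needed.
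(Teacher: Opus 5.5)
Your proof is correct, and every step checks out with the paper's right-action conventions.

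\begin{itemize}
\item $L(T)$ and $R(T)$ are the only minimal normal subgroups of $W$, so conjugation gives a homomorphism $D\to C_2$.
\item Its kernel $N_{\Sym(T)}(L(T))\cap N_{\Sym(T)}(R(T))$ equals $\operatorname{Hol}(T)=\Aut(T)R(T)$. This follows from your normalizer-of-a-regular-group computation (evaluating $\sigma^{-1}R_h\sigma=R_{\phi(h)}$ at $1$ does give $h^\sigma=\phi(h)$), together with $\alpha^{-1}L_a\alpha=L_{\alpha(a)}$.
\item The identity $\iota L_a\iota=R_{a^{-1}}$ shows the homomorphism is onto.
\item The equality $W=\Inn(T)R(T)$, together with the fact that $\iota$ commutes with $\Aut(T)$, gives $D/W\cong\Out(T)\times C_2$.
\end{itemize}

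The only phrasing to tidy is that it is the coset $W\iota$ that has order $2$ in $D/W$. That needs $\iota\notin W$, which is immediate from $\iota\notin\operatorname{Hol}(T)\supseteq W$.

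The paper does not prove this proposition at all; it quotes it from Praeger and Dixon--Mortimer. So your argument is a self-contained replacement rather than a variant of anything in the text.

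For comparison, another common route bounds $D$ through a point stabilizer.
\begin{itemize}
\item By the Frattini argument, $D=WD_1$.
\item Conjugation embeds $D_1$ into $\Aut(W)=\Aut(T)\wr S_2$. The embedding is injective: an element of $D_1$ centralizing $W$ lies in $C_{\Sym(T)}(R(T))=L(T)$ by Lemma~\ref{lem:centralizer-magic}, and it fixes $1$, so it is trivial.
\item The image normalizes the diagonal $W_1$, so it lies in $\{(\alpha,\alpha)\}\times S_2\cong\Aut(T)\times C_2$.
\item This gives $|D|\le 2|T|\,|\Aut(T)|$, and the bound is attained by $\langle\iota\rangle\operatorname{Hol}(T)$.
\end{itemize}
That version extends directly to $T^k$ and the groups $T^k.(\Out(T)\times S_k)$. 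Yours stays within the left/right translation calculus the paper already sets up, and it makes the factor-swapping criterion immediate, since $\operatorname{Hol}(T)$ is defined to be the kernel of the swap homomorphism.
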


\begin{thm}\label{thm:pansin-sl}
For every odd prime power $q\ge5$, the $\SL(2,q)$ graph of
\cite[Theorem~4.16]{PanSin2026} is not a Cayley graph of any abelian
group.
\end{thm}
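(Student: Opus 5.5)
By Lemma~\ref{lem:pansin-structure}, $X=Y[K_2]$ and every twin class of $X$ has size two. Lemma~\ref{lem:twin-reduction} then says $X$ is a Cayley graph of an abelian group if and only if $\Aut(Y)$ contains an abelian regular subgroup. So it suffices to show that $\Aut(Y)$, with $Y=\Cay(H,\bar S)$ and $H=\PSL(2,q)$ simple, has no abelian regular subgroup.

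The first step is to locate a primitive group of simple diagonal type inside $\Aut(Y)$. Since $\bar S$ is closed under conjugation, both $L(H)$ and $R(H)$ act as automorphisms of $Y$. Hence the two-sided translations $x\mapsto axb^{-1}$ form a copy of $T\times T$ in $\Aut(Y)$ with $T=H$, and the point stabilizer of $1$ is the diagonal subgroup. Since $\bar S$ is also inverse-closed, inversion $\iota$ is an automorphism of $Y$. Set $W=\langle T\times T,\iota\rangle$. The group $W$ is primitive, because the diagonal is maximal in $T\times T$ once $\iota$ is added to swap the two factors (the standard fact that $T^2\rtimes C_2$ of diagonal type is primitive, cf.\ \cite[Section~4.5]{dixon1996}). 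Thus $W$ is primitive of simple diagonal type.

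The second step applies Proposition~\ref{prop:praeger-sd} to $W\le\Aut(Y)\le\Sym(H)$. This gives either $\Aut(Y)\in\{A_n,S_n\}$, or $\soc(\Aut(Y))=T\times T$. The first option is impossible: $Y$ is neither complete nor empty, since $\bar S$ is a proper nonempty subset of $H\setminus\{1\}$, so $\Aut(Y)$ is not $2$-transitive. Therefore $\Aut(Y)$ normalizes $T\times T$, and $\Aut(Y)\le D=N_{\Sym(T)}(T\times T)$. By Proposition~\ref{prop:sd-normalizer}, $|D|=2|T|^2|\Out(T)|=4f|T|^2$.

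The third step, which I expect to be the main obstacle, excludes an abelian regular subgroup $A\le D$ with $|A|=|T|$. Let $B=A\cap(T\times T)$. Since $|D:T\times T|=4f$, the index satisfies $|A:B|\le 4f$, and indeed $|A:B|$ divides $4f$. Let $A_0=A\cap\operatorname{Hol}(T)$, which preserves both factors; then $B$ has index at most $2$ in $A_0$. Project $B$ onto the two factors, with images $B_1,B_2\le T$. Both are abelian, so $|B_i|\le q$ by Lemma~\ref{lem:psl-abelian}, and $|B|\le|B_1||B_2|\le q^2$. Then
\[
|T|=\tfrac{q(q^2-1)}{2}\le 4f\,q^2 .
\]
This inequality fails once $q^2-1>8fq$, roughly $q>8f$, which leaves only finitely many small $q$ (e.g.\ $q=5,7,9,25,27,\dots$).

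For those small $q$ I would sharpen the count. The cyclic-or-elementary structure from Lemma~\ref{lem:psl-abelian} gives $|B_1||B_2|$ much smaller than $q^2$ in practice, and the factor-swapping part of $A$ forces $B_1\cong B_2$ to be compatible. Concretely, $|A|=|T|$ must be divisible by the orders coming from a product of two abelian subgroups times a divisor of $4f$. For $q=5,7,9$ I would check divisibility directly against the listed orders $\{1,\dots,5\}$, $\{1,2,3,4,7\}$, $\{1,2,3,4,5,9\}$. For example, $|T|=60$ requires $|B|\ge 60/4=15$ with $|B|$ dividing $|B_1||B_2|$ and $B$ abelian. Also, the regularity of $A$ prevents $B$ from meeting the diagonal nontrivially. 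If the crude bound leaves a few further cases, I would handle them by this divisibility check, or note $\gcd$ constraints from the primes $p$ and the divisors of $(q\pm1)/2$.
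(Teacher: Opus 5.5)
The reduction to $\Aut(Y)$ and the Praeger/socle step are the paper's, but your final step leaves $q=5$ and $q=9$ open. The bound is also sharper than you say. With $q=p^f\ge 3^f$, the inequality $q^2-1\le 8fq$ forces $q\le 8$ if $f=1$ and $q\le 16$ if $f=2$, and it is impossible for $f\ge3$. So exactly $q\in\{5,7,9\}$ survive; $q=25,27$ do not, since $624>400$ and $728>648$. Keeping $c=|A:B|$, which divides $4f$, the inequality $q^2-1\le 2cq$ gives $c=4,4,8$ and $|B|=15,42,45$ for $q=5,7,9$.

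Divisibility against Lemma~\ref{lem:psl-abelian} disposes of $q=7$, but not the other two cases. The orders $15=3\cdot5$ and $45=5\cdot9$ are products of admissible orders, so $B=B_1\times B_2$ with $\{|B_1|,|B_2|\}=\{3,5\}$, respectively $\{5,9\}$. Such a $B$ is abelian, and it meets the diagonal trivially because $|B_1|$ and $|B_2|$ are coprime. So none of the constraints you actually state gives a contradiction for $q=5$ or $q=9$.

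The missing step is that $A$ is abelian, so $A\le C_D(B)$. An element of $D$ interchanging the two factors acts on $T\times T$ by $(x,y)\mapsto(\phi(y),\psi(x))$ with $\phi,\psi\in\Aut(T)$. If it centralized $B$ it would force $B_1=\phi(B_2)$, which is impossible since $|B_1|\neq|B_2|$. (As the paper puts it, it would send $(b_1,1)\in B$ into $\{1\}\times T$.) Hence $A\le\operatorname{Hol}(T)$, so $A/B$ embeds in $\Out(T)$ and $c$ divides $2f$. This contradicts $c=4$ when $q=5$ (where $2f=2$) and $c=8$ when $q=9$ (where $2f=4$). Your remark that factor-swapping \emph{forces $B_1\cong B_2$} is the right idea, but it must be carried through to $c\mid 2f$.

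Your claim that $B$ has index at most $2$ in $A_0=A\cap\operatorname{Hol}(T)$ is false for $f\ge2$. In fact $|A_0:B|$ divides $|\Out(T)|=2f$, and it is $|A:A_0|$ that is at most $2$. For $q=9$ the correct statement, $c\mid 4$, is exactly what yields the contradiction.

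Finally, $T\times T$ is already primitive, because $\Delta T$ is maximal in $T\times T$ for simple $T$. Adjoining $\iota$ is needed for a different reason: it makes $T\times T$ the unique minimal normal subgroup, so that $W$ is of simple diagonal type as Proposition~\ref{prop:praeger-sd} requires.
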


\proof By Lemmas~\ref{lem:twin-reduction} and
\ref{lem:pansin-structure}, it suffices to show that $\Aut(Y)$
contains no abelian regular subgroup.  Write $m:=|H|=q(q^2-1)/2$.

Since $\bar S$ is closed under conjugation and inversion, all maps
$x\mapsto axb^{-1}$ with $a,b\in H$, as well as the inversion map
$\iota\colon x\mapsto x^{-1}$, are automorphisms of $Y$.  The two-sided
translations form a group $M_0\cong H\times H$, and the stabilizer in
$M_0$ of the vertex $1$ is the diagonal subgroup $\Delta H$.  Since $H$
is simple, $\Delta H$ is maximal in $H\times H$, so $M_0$ is primitive.
The inversion map normalizes $M_0$ and interchanges its two simple direct
factors, and so $H\times \{1\}$ and $\{1\}\times H$ are not normal in $M:=\langle M_0,\iota\rangle$. Since they are the only proper non-trivial normal subgroups of $M_0$, this implies that
$M_0\cong H\times H$ is a minimal normal subgroup of
$M$.  The group $M$ is primitive because it
contains the primitive group $M_0$, while this minimal normal subgroup is
not regular, since its stabilizer of $1$ is $\Delta H$.  It follows from
\cite[Theorem~4.3B]{dixon1996} that
$\soc(M)=M_0\cong H\times H$.  Thus $M$ is a 
  primitive group of simple diagonal type in the O'Nan--Scott classification (see 
  \cite[Section~4.5]{dixon1996}; \cite{Praeger1990}).

The graph $Y$ is neither empty nor complete, since $0<q^2-1<m-1$, so
$\Aut(Y)$ is not $2$-transitive and in particular does not contain
$A_m$. 
Moreover, $\Aut(Y)$ is primitive because it contains
$M$.  Proposition~\ref{prop:praeger-sd}, applied to $M\le\Aut(Y)$, says that
$\Aut(Y)$ is $A_m$ or $S_m$, or is itself of
simple diagonal type with the same socle as $M$.  The first two
alternatives are excluded by the preceding observation.  Consequently
$\soc(\Aut(Y))=H\times H$ and
\[
  \Aut(Y)\le D:=N_{\Sym(H)}(H\times H).
\]
By Proposition~\ref{prop:sd-normalizer} applied to $T = H$, the subgroup
$H\times H$ is normal in $D$, every element of $D$ either preserves or
interchanges the two direct factors, and
$D/(H\times H)\cong\Out(H)\times C_2$, where the $C_2$ is generated by
the image of $\iota$ and an element of $D$ maps onto it if and only if it
interchanges the factors.

Suppose that $A\le D$ is abelian and regular.  Let
$B_0:=A\cap(H\times H)$ and $c:=|A:B_0|$.  Since $A/B_0$ embeds into
$D/(H\times H)\cong\Out(H)\times C_2$, whose order is $4f$, the
index $c$ divides $4f$.  
Since $A$ is regular of degree $m$ we have $|A| = m$, so $|B_0| = m/c$.
Let $B_1$ and $B_2$ be the projections of $B_0$ to the two direct factors, so
that $B_0\le B_1\times B_2$ and hence $|B_0|\le|B_1|\,|B_2|$.
These projections are abelian, so each has order at most $q$ by
Lemma~\ref{lem:psl-abelian}, and hence
\[
  \frac{q(q^2-1)}{2c}=\frac{m}{c}\le|B_1|\,|B_2|\le q^2,
  \qquad\text{so}\qquad q^2-1\le2cq\le8fq.
\]
For $f=1$ this forces $q\le8$, for $f=2$ it forces $q\le16$, and for
$f\ge3$ it is impossible because $q\ge3^f>8f$.  Thus only
$q\in\{5,7,9\}$ remain, and for these the inequality $q^2-1\le2cq$
together with $c\mid4f$ determines $c$ and therefore $|B_0|$:
\[
\begin{array}{c|ccc}
q&5&7&9\\ \hline
c&4&4&8\\
|B_0|&15&42&45.
\end{array}
\]

For $q=7$, Lemma~\ref{lem:psl-abelian} gives that the possible
orders of abelian subgroups of $H=\mathrm{PSL}(2,7)$ are
$1,2,3,4,7$.  Since $B_0\le B_1\times B_2$,
the order $42$ of $B_0$ would have to divide $|B_1|\,|B_2|$, and no
product of two numbers in this list is divisible by $42$.

For $q=5$, we have $H\cong A_5$
\cite[Section~7.7]{dixon1996}, and the possible orders are $1,2,3,4,5$ by Lemma~\ref{lem:psl-abelian}. Then the only pair with product divisible by $15$ is $|B_1|=3$ and $|B_2|=5$ and in this case we see that $B_0=B_1\times B_2$.  For $q=9$, the
possible orders of abelian subgroups of $H\cong A_6$ are
$1,2,3,4,5,9$ (Lemma~\ref{lem:psl-abelian} again), and the same reasoning gives
$|B_1|=5$, $|B_2|=9$ and
$B_0=B_1\times B_2$.  
In either case $B_0=B_1\times B_2$ with $B_1,B_2$
  non-trivial, so $B_0$ contains an element $(b_1,1)$ with $b_1\neq1$. If $d\in D$  
  interchanges the two factors, then $d(b_1,1)d^{-1}\in\{1\}\times H$, so $d$ does not 
  centralize $B_0$. Since $A$ is abelian, $A\le C_D(B_0)$, so no element of $A$ 
  interchanges the factors;  hence the
embedding $A/B_0\hookrightarrow\Out(H)\times C_2$ has image in
the factor-preserving subgroup $\Out(H)$, and therefore $c$
divides $|\Out(H)| = 2f$.  This contradicts $c = 4$ for
$q = 5$, where $2f = 2$, and $c = 8$ for $q = 9$, where $2f = 4$.
Hence no abelian regular subgroup exists for any $q\ge5$.\qed

We conclude this section with a brief discussion of the other two
families of \cite{PanSin2026}: the graphs on $\GL(2,q)$ and on
$\GU(2,q)$ \cite[Theorems~4.8 and~4.15]{PanSin2026}.  For $q=3$ the linear family degenerates: the connection set of \cite[Theorem~4.8]{PanSin2026} is
all of $\GL(2,3)\sm\{\pm I\}$, so the graph is $K_{48}$ minus a
perfect matching, a Cayley graph of every group of order $48$
(Section~\ref{sec:kminuspm}). It is possible that this degenerate case is
the only member of the two families that is a Cayley graph of an
abelian group; the four smallest members beyond it are not, by the computation of Appendix~\ref{app:pansin-glgu}. 

\begin{prop}\label{prop:pansin-glgu}
The following graphs of \cite{PanSin2026} are not Cayley graphs of any
abelian group: the graph
$\Gamma'=\Cay(\GL(2,3),\mathfrak T)$ of
\cite[Section~4.1]{PanSin2026}, where $\mathfrak T$ is the set of
non-central elements of order $2$, $3$, $4$ or $6$ (so $\mathfrak T$
omits the elements of order $8$, and $\Delta$ is not the degenerate member
of the $\GL(2,q)$ family discussed above); the
$\GU(2,3)$ graph on $96$ vertices; the $\GL(2,5)$ graph
on $480$ vertices; and the $\GU(2,5)$ graph on $720$ vertices.
\end{prop}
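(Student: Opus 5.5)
The four claims are finite statements about explicit graphs, and the plan is to certify each one by computation, organised so that the search is as small as possible. For each graph $X=\Cay(G,S)$ with $G\in\{\GL(2,3),\GU(2,3),\GL(2,5),\GU(2,5)\}$, I will first build the connection set exactly as in \cite{PanSin2026} and confirm that it is a union of conjugacy classes with $St=S$. I then compute the twin classes of $X$, expecting them to be exactly the pairs $\{g,tg\}$, as in Lemma~\ref{lem:pansin-structure}. When this holds, $X=Y[K_2]$ or $X=Y[2K_1]$ with $Y=\Cay(G/\langle t\rangle,\bar S)$ by Lemma~\ref{lem:quotientCay}. Lemma~\ref{lem:twin-reduction} then reduces the question to whether $\Aut(Y)$ contains an abelian regular subgroup. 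This halves the number of vertices and discards the factor $2^{|V(Y)|}$ in $|\Aut(X)|$, which would otherwise make the subgroup enumeration infeasible, especially at $480$ and $720$ vertices. If some twin class turns out to be larger than a pair, I fall back to working directly with $\Aut(X)$.

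Next I compute $\Aut(Y)$ with nauty through SageMath and pass it to GAP as a permutation group. Instead of enumerating all regular subgroups, I use a cheap necessary condition. An abelian regular subgroup $A$ of order $N=|V(Y)|$ is a direct product of its Sylow subgroups. So for each prime $r\mid N$, $\Aut(Y)$ must contain an abelian $r$-subgroup of order $N_r$, and these subgroups must commute pairwise. I first test, one prime at a time, whether some Sylow $r$-subgroup of $\Aut(Y)$ contains an abelian subgroup of order $N_r$ acting semiregularly; any semiregular subgroup of a subgroup of $A$ must have this property. A single prime that fails this test settles the case. For example, an $r$-part that forces a cyclic group of order exceeding the largest element order, or an elementary abelian group of rank exceeding what the Sylow subgroup admits, is enough.

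If the prime-by-prime test does not decide a case, I will run a full search. I fix a Sylow $2$-subgroup, enumerate conjugacy classes of abelian semiregular subgroups $P$ of order $N_2$, and for each one compute the centralizer $C_{\Aut(Y)}(P)$. Inside that centralizer I search for a commuting semiregular complement of order $N/N_2$ such that the product is transitive. Since an abelian transitive group is automatically regular, transitivity is the only condition left to check at the end.

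I expect the main obstacle to be the two larger cases, $\GL(2,5)$ and $\GU(2,5)$. There $\Aut(Y)$ may be large, because conjugacy-closed connection sets give at least the two-sided translations by $G/\langle t\rangle$ together with inversion. Moreover, because $G/\langle t\rangle$ is not simple, the primitivity argument of Theorem~\ref{thm:pansin-sl} is unavailable. My first attempt will be the Sylow and element-order obstruction, which should be fast. Only if it fails will I use the centralizer-based search, certifying each stage by exact GAP computations and recording the graph6 strings and group orders in Appendix~\ref{app:pansin-glgu}.
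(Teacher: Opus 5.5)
Your proposal is correct and is essentially the paper's own argument. You check computationally that the twin classes of $X$ are exactly the pairs $\{g,tg\}$ (here $t\notin S$, so $X=Y[2K_1]$), reduce via Lemma~\ref{lem:twin-reduction} to whether $\Aut(Y)$ contains an abelian regular subgroup, and decide that by an exhaustive centralizer-based search in GAP. The differences are only bookkeeping: the paper centralizes a single fixed-point-free element of prime order, running over class representatives whose centralizer is transitive, while you centralize the whole Sylow $2$-part of $A$ after a Sylow prefilter. That is equally valid as long as you also require the complement you search for to be abelian.
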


\proof This is verified by an exhaustive computation in SageMath
\cite{sage}; the computation is described in
Appendix~\ref{app:pansin-glgu}.\qed

\begin{prob}\label{prob:pansin-stability}
Decide whether the $\GL(2,q)$ and $\GU(2,q)$ graphs of
\cite{PanSin2026} with $q\ge7$ are Cayley graphs of abelian groups.
\end{prob}

\section{Peak state transfer}\label{sec:peak-ref}

In keeping with the theme of this paper, when we find peak state transfer
on a Cayley graph we do not stop at one presentation: we determine every
group over which the graph is a Cayley graph.  By Sabidussi's theorem
\cite{Sab1958}, these are exactly the isomorphism types of regular
subgroups of the automorphism group.  In this section we carry this out
for an infinite family of graphs with peak state transfer at time $\pi/3$
whose amount does not decay as the family grows, and whose Cayley groups
are all non-abelian.

\subsection{A graphical regular representation with peak state transfer}

Let $F_{20}$ denote the Frobenius group of order $20$,
\[
    F_{20} = \langle a,b \mid a^5 = b^4 = 1,\ bab^{-1} = a^2 \rangle,
\]
which is $\mathrm{SmallGroup}(20,3)$ in GAP.  Writing $(i,j)$ for
$a^ib^j$, let
\[
    S = \{(0,1),(0,2),(0,3),(1,0),(1,1),(1,2),(1,3),
          (2,0),(2,3),(3,0),(3,1),(3,3),(4,0),(4,1)\}.
\]
Then $S$ is inverse-closed, and $X_{14} = \Cay(F_{20}, S)$ is a connected
$14$-regular graph on $20$ vertices.
\footnote{In graph6 format, $X_{14}$
  is \Verb"S|X~s|~ztib|qvZNz[~~pn~rq~my~fh^k".}

This graph was found in a census of the vertex-transitive graphs on fewer than $48$ vertices
\cite{RoyleHolt2020VTGraphs}.  A computation shows that
$|\Aut(X_{14})| = 20$; hence $\Aut(X_{14})$ is the regular group
$R(F_{20})$, and $X_{14}$ is a graphical regular representation of
$F_{20}$.  In particular, $X_{14}$ is a Cayley graph of $F_{20}$ and of
no other group.
Since the graph $X_{14}$ is dense, for visual simplicity Figure~\ref{fig:x14-complement} shows its complement, the $5$-regular Cayley graph of $F_{20}$ on the connection set $\{a^{2}b,\,a^{4}b^{3},\,a^{2}b^{2},\,a^{3}b^{2},\,a^{4}b^{2}\}$, which has the same automorphism group.

\begin{figure}[t]
\centering
\begin{tikzpicture}[scale=1.0,
  spoke/.style={draw=black!85, line width=0.45pt},
  petal/.style={draw=black!85, line width=0.45pt},
  chordA/.style={draw=black!55, line width=0.45pt},
  chordB/.style={draw=black!30, line width=0.45pt},
  vx/.style={circle, draw=black, fill=white, inner sep=1.4pt},
  pk/.style={circle, draw=black, fill=myorange, inner sep=1.6pt},
  idv/.style={circle, draw=black, double, fill=white, inner sep=1.7pt},
  lb/.style={label distance=0.5pt, inner sep=0.9pt, fill=white, fill opacity=0.82, text opacity=1, font=\scriptsize}]
  \coordinate (v00) at (90.0:4.0);
  \coordinate (v01) at (18.0:3.0);
  \coordinate (v02) at (162.0:2.05);
  \coordinate (v03) at (234.0:1.2);
  \coordinate (v10) at (162.0:4.0);
  \coordinate (v11) at (234.0:3.0);
  \coordinate (v12) at (90.0:2.05);
  \coordinate (v13) at (18.0:1.2);
  \coordinate (v20) at (234.0:4.0);
  \coordinate (v21) at (90.0:3.0);
  \coordinate (v22) at (18.0:2.05);
  \coordinate (v23) at (162.0:1.2);
  \coordinate (v30) at (306.0:4.0);
  \coordinate (v31) at (306.0:3.0);
  \coordinate (v32) at (306.0:2.05);
  \coordinate (v33) at (306.0:1.2);
  \coordinate (v40) at (18.0:4.0);
  \coordinate (v41) at (162.0:3.0);
  \coordinate (v42) at (234.0:2.05);
  \coordinate (v43) at (90.0:1.2);
  \draw[chordB] (v01) -- (v23);
  \draw[chordB] (v01) -- (v33);
  \draw[chordB] (v01) -- (v43);
  \draw[chordB] (v03) -- (v21);
  \draw[chordB] (v03) -- (v31);
  \draw[chordB] (v03) -- (v41);
  \draw[chordB] (v11) -- (v13);
  \draw[chordB] (v11) -- (v23);
  \draw[chordB] (v11) -- (v33);
  \draw[chordB] (v13) -- (v21);
  \draw[chordB] (v13) -- (v31);
  \draw[chordB] (v21) -- (v23);
  \draw[chordB] (v31) -- (v43);
  \draw[chordB] (v33) -- (v41);
  \draw[chordB] (v41) -- (v43);
  \draw[chordA] (v00) -- (v22);
  \draw[chordA] (v00) -- (v32);
  \draw[chordA] (v00) -- (v42);
  \draw[chordA] (v02) -- (v20);
  \draw[chordA] (v02) -- (v30);
  \draw[chordA] (v02) -- (v40);
  \draw[chordA] (v10) -- (v12);
  \draw[chordA] (v10) -- (v22);
  \draw[chordA] (v10) -- (v32);
  \draw[chordA] (v12) -- (v20);
  \draw[chordA] (v12) -- (v30);
  \draw[chordA] (v20) -- (v22);
  \draw[chordA] (v30) -- (v42);
  \draw[chordA] (v32) -- (v40);
  \draw[chordA] (v40) -- (v42);
  \draw[spoke] (v00) -- (v21);
  \draw[spoke] (v01) -- (v22);
  \draw[spoke] (v01) -- (v40);
  \draw[spoke] (v02) -- (v23);
  \draw[spoke] (v02) -- (v41);
  \draw[spoke] (v03) -- (v42);
  \draw[spoke] (v10) -- (v41);
  \draw[spoke] (v11) -- (v42);
  \draw[spoke] (v11) -- (v20);
  \draw[spoke] (v12) -- (v43);
  \draw[spoke] (v12) -- (v21);
  \draw[spoke] (v13) -- (v22);
  \draw[spoke] (v30) -- (v31);
  \draw[spoke] (v31) -- (v32);
  \draw[spoke] (v32) -- (v33);
  \draw[petal] (v00) to[bend right=40] (v43);
  \draw[petal] (v03) to[bend right=40] (v20);
  \draw[petal] (v10) to[bend right=40] (v23);
  \draw[petal] (v13) to[bend right=40] (v40);
  \draw[petal] (v30) to[bend right=40] (v33);
  \node[idv, label={[lb]90:$1$}] at (90.0:4.0) {};
  \node[pk, label={[lb]288:$b$}] at (18.0:3.0) {};
  \node[vx, label={[lb]72:$b^{2}$}] at (162.0:2.05) {};
  \node[pk, label={[lb]54:$b^{3}$}] at (234.0:1.2) {};
  \node[pk, label={[lb]162:$a$}] at (162.0:4.0) {};
  \node[vx, label={[lb]144:$ab$}] at (234.0:3.0) {};
  \node[pk, label={[lb]0:$ab^{2}$}] at (90.0:2.05) {};
  \node[vx, label={[lb]198:$ab^{3}$}] at (18.0:1.2) {};
  \node[vx, label={[lb]234:$a^{2}$}] at (234.0:4.0) {};
  \node[vx, label={[lb]0:$a^{2}b$}] at (90.0:3.0) {};
  \node[vx, label={[lb]288:$a^{2}b^{2}$}] at (18.0:2.05) {};
  \node[vx, label={[lb]342:$a^{2}b^{3}$}] at (162.0:1.2) {};
  \node[vx, label={[lb]306:$a^{3}$}] at (306.0:4.0) {};
  \node[vx, label={[lb]216:$a^{3}b$}] at (306.0:3.0) {};
  \node[vx, label={[lb]216:$a^{3}b^{2}$}] at (306.0:2.05) {};
  \node[vx, label={[lb]126:$a^{3}b^{3}$}] at (306.0:1.2) {};
  \node[pk, label={[lb]18:$a^{4}$}] at (18.0:4.0) {};
  \node[vx, label={[lb]72:$a^{4}b$}] at (162.0:3.0) {};
  \node[vx, label={[lb]144:$a^{4}b^{2}$}] at (234.0:2.05) {};
  \node[vx, label={[lb]270:$a^{4}b^{3}$}] at (90.0:1.2) {};
\end{tikzpicture}
\caption{The complement of $X_{14}$. 
The identity (vertex marked double circle) admits peak state transfer in $X_{14}$ at time $\pi/3$, with amount $16/225$, with each of the five filled vertices $a$, $a^{4}$, $b$, $b^{3}$, $ab^{2}$. }
\label{fig:x14-complement}
\end{figure}

The graph $X_{14}$ admits peak state transfer from the identity to each of
$a$, $a^{-1}$, $b$, $b^{-1}$ and $ab^{2}$.
Each of these five pairs $(u,v)$ has eigenvalue support $\{14,2,-1\}$, with idempotent
entries
\[
    (E_{14})_{v,u} = \frac{1}{20},\qquad
    (E_{2})_{v,u} = \frac{1}{12},\qquad
    (E_{-1})_{v,u} = -\frac{2}{15}.
\]
At $\tau = \pi/3$ we have
$e^{14i\tau} = e^{2i\tau} = -e^{-i\tau} = e^{2\pi i/3}$, so the three
terms of $U(\tau)_{v,u}$ align, and
\[
    |U(\tau)_{v,u}| = \frac{1}{20}+\frac{1}{12}+\frac{2}{15}
        = \frac{4}{15}.
\]
Since $|U(t)_{v,u}|$ never exceeds the sum of the absolute values of the
idempotent entries, this is peak state transfer at time $\pi/3$ with
amount $16/225$. 

\subsection{Products with periodic graphs}

Recall that a graph $Y$ is \textsl{periodic} at a vertex $y$ at time
$\tau$ if $|U_Y(\tau)_{y,y}| = 1$.  Peak state transfer survives Cartesian
products with periodic factors, and the amount is unchanged. The analogue of following lemma for perfect state transfer, which is the case $\alpha = 1$, is due to Coutinho and Godsil~\cite[Lemma~3.2]{Coutinho2018}.

\begin{lem}\label{lem:peak-product}
    Suppose $X$ admits peak state transfer from $u$ to $v$ at time $\tau$
    with amount $\alpha$, and $Y$ is periodic at the vertex $y$ at time
    $\tau$.  Then $X\square Y$ admits peak state transfer from $(u,y)$ to
    $(v,y)$ at time $\tau$ with amount $\alpha$.
\end{lem}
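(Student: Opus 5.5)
The plan is to use the tensor factorization of the walk on a Cartesian product, together with a compatible description of its spectral idempotents, and to show that both sides of the peak-state-transfer inequality scale by the same factor.

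\textbf{Factorization of the walk.} Since $A(X\square Y)=A(X)\otimes I+I\otimes A(Y)$ and the two summands commute, we have
\[
U_{X\square Y}(t)=U_X(t)\otimes U_Y(t).
\]
Hence
\[
U_{X\square Y}(\tau)_{(v,y),(u,y)}=U_X(\tau)_{v,u}\,U_Y(\tau)_{y,y}.
\]
Periodicity of $Y$ at $y$ gives $|U_Y(\tau)_{y,y}|=1$. Therefore $|U_{X\square Y}(\tau)_{(v,y),(u,y)}|=|U_X(\tau)_{v,u}|$, which equals $\sqrt\alpha$ by hypothesis.

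\textbf{Bounding the product's idempotent sum.} It remains to show that the upper bound for $X\square Y$, namely $\sum_\mu|(F_\mu)_{(v,y),(u,y)}|$ over the spectral idempotents $F_\mu$ of $X\square Y$, is at most $\sqrt\alpha=\sum_\theta|(E_\theta)_{v,u}|$. Since $|U(t)_{v,u}|\le\sum|(E)_{v,u}|$ always holds, this upper bound is then attained at $\tau$, which is exactly peak state transfer with amount $\alpha$.

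Write the spectral decomposition of $Y$ as $A(Y)=\sum_\lambda\lambda E'_\lambda$. Then
\[
F_\mu=\sum_{\theta+\lambda=\mu}E_\theta\otimes E'_\lambda,
\]
and so
\[
(F_\mu)_{(v,y),(u,y)}=\sum_{\theta+\lambda=\mu}(E_\theta)_{v,u}(E'_\lambda)_{y,y}.
\]
The entries $(E'_\lambda)_{y,y}$ are nonnegative, because $E'_\lambda$ is positive semidefinite, and they sum to $1$. By the triangle inequality,
\[
\sum_\mu|(F_\mu)_{(v,y),(u,y)}|\le\sum_{\theta,\lambda}|(E_\theta)_{v,u}|(E'_\lambda)_{y,y}=\sum_\theta|(E_\theta)_{v,u}|=\sqrt\alpha.
\]
Combining this with the first step gives equality throughout.

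\textbf{Main obstacle.} The only subtle point is that several pairs $(\theta,\lambda)$ can collide into the same eigenvalue $\mu$ of the product, so the idempotent entries of $X\square Y$ are not simply the products of the factors' entries. The triangle-inequality step handles this cleanly, and it uses periodicity only through the attained value at $\tau$. No phase condition on $Y$ beyond $|U_Y(\tau)_{y,y}|=1$ is needed.
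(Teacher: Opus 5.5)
Your proposal is correct and follows essentially the same route as the paper's proof: the factorization $U_{X\square Y}(t)=U_X(t)\otimes U_Y(t)$ to get the value $\sqrt{\alpha}$ at $\tau$, then the expression of the product's idempotents as sums of $E_\theta\otimes E'_\lambda$ over colliding eigenvalue pairs, with the triangle inequality and $(E'_\lambda)_{y,y}\ge 0$, $\sum_\lambda (E'_\lambda)_{y,y}=1$ bounding the spectral sum by $\sqrt{\alpha}$. You also identify, as the paper does, that eigenvalue collisions in the product are the only subtlety and that the triangle inequality handles them.
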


\proof The adjacency matrix of $X\square Y$ is
$A_X\otimes I + I\otimes A_Y$, and the two summands commute, so the
exponential factors:
\[
    U_{X\square Y}(t) = U_X(t)\otimes U_Y(t).
\]
Entries of a Kronecker product factor coordinate-wise, so
\[
    |U_{X\square Y}(\tau)_{(v,y),(u,y)}|
        = |U_X(\tau)_{v,u}|\,|U_Y(\tau)_{y,y}| = \sqrt{\alpha},
\]
using the periodicity of $Y$ at $y$ for the second factor.  It remains to check
that $\sqrt{\alpha}$ is the spectral bound for the pair $(u,y)$, $(v,y)$ in the
product.
In $X\square Y$ distinct sums of eigenvalues may coincide, so this does not follow immediately from the corresponding statement in $X$.
The eigenvalues of $X\square Y$ are the sums $\theta+\mu$ of an eigenvalue $\theta$
of $X$ and an eigenvalue $\mu$ of $Y$, and the spectral idempotents of the
product are
\[
    E^{X\square Y}_{\lambda}
        = \sum_{\theta+\mu=\lambda} E^X_\theta\otimes E^Y_\mu .
\]
Write $f_\mu = (E^Y_\mu)_{y,y}$.  Since $E^Y_\mu$ is a projection,
$f_\mu = \|E^Y_\mu e_y\|^2 \ge 0$, and $\sum_\mu f_\mu = 1$ because the
idempotents sum to the identity.  Hence
\[
    \sum_\lambda \bigl|(E^{X\square Y}_\lambda)_{(v,y),(u,y)}\bigr|
    \;\le\; \sum_\theta\sum_\mu \bigl|(E^X_\theta)_{v,u}\bigr| f_\mu
    \;=\; \sum_\theta \bigl|(E^X_\theta)_{v,u}\bigr| \;=\; \sqrt{\alpha},
\]
the last equality because $X$ has peak state transfer from $u$ to $v$ with
amount $\alpha$.  So the spectral bound for the product is at most
$\sqrt{\alpha}$, and it is attained at $t=\tau$; therefore it equals
$\sqrt{\alpha}$ and $X\square Y$ has peak state transfer from $(u,y)$ to
$(v,y)$ at time $\tau$ with amount $\alpha$. \qed

The complete graph $K_6$ has eigenvalues $5$ and $-1$, and
$e^{5\pi i/3} = e^{-\pi i/3}$, so $K_6$ is periodic at time $\pi/3$.
Periodicity at a fixed time is also preserved by Cartesian products,
since the transition matrix of a product is the tensor product of the
transition matrices of the factors.  Consequently the Hamming graph
$H(d,6) = K_6^{\,\square d}$ is periodic at $\pi/3$ at every vertex, and
Lemma~\ref{lem:peak-product} yields an infinite family,
\[
    \Gamma_d = X_{14}\,\square\, H(d,6), \qquad d\ge 0,
\]
of graphs with peak state transfer at time $\pi/3$ with amount $16/225$,
independent of $d$. Here $H(0,6)$ is the one-vertex graph, so $\Gamma_0 = X_{14}$ itself, and the case $d = 0$ of the transfer claim is the computation of the previous subsection.

\subsection{The Cayley groups of the family}

Each $\Gamma_d$ is a Cayley graph, being a Cartesian product of Cayley  graphs.
We now determine all regular subgroups of its automorphism group.

\begin{thm}\label{thm:x14-family}
    Let $d\ge 0$, let $A = \Aut(X_{14})\cong F_{20}$, and let
    $W_d = S_6\wr S_d$ act on $V(H(d,6)) = \{1,\ldots,6\}^d$ in product
    action, so that $W_0$ is the  group of order $1$ acting on the one-point
    set and $\Gamma_0 = X_{14}$.  Then
    \[
        \Aut(\Gamma_d) = A\times W_d ,
    \]
    and the regular subgroups of $\Aut(\Gamma_d)$ are precisely the groups
    \[
        R_\phi = \{(a,c)\in A\times N_{W_d}(H) : cH = \phi(a)\},
    \]
    where $H$ is a regular subgroup of $W_d$ and
    $\phi: A\to N_{W_d}(H)/H$ is a homomorphism.  Moreover
    $R_\phi\cong H\rtimes_\psi A$, where $\psi: A\to\Aut(H)$ is the composite
    of $\phi$ with the embedding of $N_{W_d}(H)/H$ into $\Aut(H)$ given by
    conjugation. Every $R_\phi$ has a
    quotient isomorphic to $F_{20}$; in particular, $\Gamma_d$ is not a
    Cayley graph of an abelian group.
\end{thm}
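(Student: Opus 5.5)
The plan has three parts. First identify $\Aut(\Gamma_d)$ from the prime factorization of $\Gamma_d$ under the Cartesian product. Then classify the regular subgroups of a direct product in which one factor is already regular, via a Goursat-type argument. Finally, produce an explicit complement to $H$ inside $R_\phi$, which gives both the semidirect product structure and the quotient $F_{20}$.

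\textbf{Step 1: the automorphism group.} I will use the unique prime factorization theorem for connected graphs with respect to the Cartesian product (Sabidussi--Vizing), together with its automorphism corollary (Imrich; Sabidussi). That corollary says the automorphism group of a connected graph is generated by automorphisms of its prime factors and permutations of isomorphic prime factors. The graph $K_6$ is prime, since $6$ is not a product of orders greater than $1$ with a compatible valency. For $X_{14}$ I argue by valency. A nontrivial factorization would have factors on $4$ and $5$ vertices, or on $2$ and $10$ vertices. A Cartesian product has valency equal to the sum of the factor valencies, which is then at most $3+4=7$ or $1+9=10$. Both are less than $14$, so $X_{14}$ is prime. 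Hence the prime factorization of $\Gamma_d$ is $X_{14}\square K_6^{\square d}$, and $X_{14}\not\cong K_6$. Automorphisms may permute only the $K_6$ factors, and $\Aut(K_6^{\square d})=S_6\wr S_d$ in product action. Therefore $\Aut(\Gamma_d)=A\times W_d$, and for $d=0$ this is consistent with $\Aut(X_{14})=R(F_{20})$.

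\textbf{Step 2: the regular subgroups.} Let $R\le A\times W_d$ be regular on $V(X_{14})\times\{1,\dots,6\}^d$, and let $\pi_1,\pi_2$ be the two projections. The group $\pi_1(R)$ is transitive on $V(X_{14})$, and $A$ is regular, so $\pi_1(R)=A$. Put $H=\{c:(1,c)\in R\}$, which is normal in $\pi_2(R)$.
\begin{itemize}
\item \emph{$H$ is semiregular.} If $c\in H$ fixes $y$, then $(1,c)\in R$ fixes $(x,y)$ for every $x$, so $c=1$ by regularity of $R$.
\item \emph{$H$ is regular.} Counting gives $|R|=|A|\,|H|=20\cdot 6^d$, so $|H|=6^d$, and a semiregular group of this order on $6^d$ points is regular.
\item \emph{$R$ has the form $R_\phi$.} Since $\pi_2(R)$ normalizes $H$, we have $\pi_2(R)\le N_{W_d}(H)$. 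By Goursat's lemma, $R$ is the graph of the homomorphism $\phi\colon A\to \pi_2(R)/H\le N_{W_d}(H)/H$, $a\mapsto cH$ for any $(a,c)\in R$. So $R=R_\phi$.
\item \emph{Every $R_\phi$ is regular.} It is a subgroup of order $20\cdot 6^d$. It is transitive: to move $(x,y)$ to $(x',y')$, take the $a\in A$ with $a x=x'$ and any $c\in\phi(a)$, then correct by the element of $H$ sending $cy$ to $y'$.
\end{itemize}

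\textbf{Step 3: splitting and the quotient.} Fix $y_0$. Because $H$ is regular, each coset $\phi(a)$ contains a unique element $c_a$ with $c_a y_0=y_0$. Products and inverses of elements fixing $y_0$ still fix $y_0$, and they lie in the correct cosets. Hence $a\mapsto(a,c_a)$ is a homomorphism, and its image $\tilde A\cong A$ is a complement to the normal subgroup $\{1\}\times H$. So $R_\phi=H\rtimes\tilde A$. Here $a$ acts on $H$ by conjugation by $c_a$, which is the map $\psi(a)$ induced through $N_{W_d}(H)/H\hookrightarrow\Aut(H)$. The kernel $\{1\}\times H$ of $\pi_1|_{R_\phi}$ then gives $R_\phi/H\cong A\cong F_{20}$. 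This quotient is non-abelian, so no $R_\phi$ is abelian.

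I expect the main obstacle to be Step 1: justifying primality of $X_{14}$ and invoking the correct form of the automorphism theorem for Cartesian products. The valency count is what I would try first, since it disposes of both possible factorizations at once. The group theory in Steps 2 and 3 is routine once $\Aut(\Gamma_d)$ is known.
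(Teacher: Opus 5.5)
Your proposal is correct and follows the paper's proof step for step. The prime factorization together with the automorphism theorem for Cartesian products gives $\Aut(\Gamma_d)=A\times W_d$. Projecting onto the regular factor $A$ and taking the kernel $H$ gives $R=R_\phi$. Choosing, in each coset $\phi(a)$, the element that fixes a base point (the paper's section $\sigma$ onto $N_{\omega_0}$) gives the splitting $H\rtimes_\psi A$ and the quotient $F_{20}$.

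The only real difference is how you show $X_{14}$ is prime. Your degree bound is more elementary: a product on $2\cdot 10$ or $4\cdot 5$ vertices has maximum degree at most $10$ or $7$, which is below $14$. The paper instead uses three facts: prime factors of a connected vertex-transitive graph are vertex-transitive, $\Aut(Y)\times\Aut(Z)$ embeds in a group of order $20$, and $F_{20}$ has trivial centre.
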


\proof We note that $H(0,6)=K_1$ and $\Gamma_0=X_{14}$.
Complete graphs are prime with respect to the Cartesian product, and $X_{14}$ is prime as well.  Suppose $X_{14} = Y\square Z$ with $1<|V(Y)|\le|V(Z)|$.  The factors of a connected graph are connected, and every prime factor of a connected vertex-transitive graph is vertex-transitive (see \cite{HamImrKla2011}), so $Y$ and $Z$ are connected vertex-transitive graphs on $2$ and $10$, or on $4$ and $5$, vertices; moreover $\Aut(Y)\times\Aut(Z)$, acting coordinate-wise, embeds in $\Aut(X_{14})$, which has order $20$.  If $|V(Y)| = 4$, then $Y\in\{C_4,K_4\}$ and $Z\in\{C_5,K_5\}$, so $|\Aut(Y)|\,|\Aut(Z)|\ge 8\cdot 10 > 20$, a contradiction.  If $Y = K_2$, then $|\Aut(Z)|\ge 10$, since $Z$ is vertex-transitive on ten vertices, and comparing orders gives $\Aut(X_{14}) = \Aut(K_2)\times\Aut(Z)$; this group has a central involution, while $\Aut(X_{14})\cong F_{20}$ is a Frobenius group with trivial centre, a contradiction.  So
\[
    \Gamma_d = X_{14}\,\square\,
        \underbrace{K_6\square\cdots\square K_6}_{d}
\]
is the prime factorization of the connected graph $\Gamma_d$.  By the
structure theorem for automorphisms of Cartesian products of connected
graphs \cite[Theorem~6.10]{HamImrKla2011}, every automorphism of
$\Gamma_d$ permutes the prime factors, carrying each factor to an
isomorphic one.  Since $X_{14}\not\cong K_6$, it follows that
$\Aut(\Gamma_d) = \Aut(X_{14})\times(\Aut(K_6)\wr S_d) = A\times W_d$,
with $W_d$ in product action on $\{1,\ldots,6\}^d$.

Let $R$ be a regular subgroup of $A\times W_d$, so
$|R| = 20\cdot 6^d$, and let $\pi_A$ and $\pi_W$ denote the projections
onto the two factors.  Since $(a,c)$ maps a vertex $(x,y)$ to
$(a(x),c(y))$, the group $\pi_A(R)$ is transitive on $V(X_{14})$; as $A$
is regular, $\pi_A(R) = A$.  Let $H = R\cap(\{1\}\times W_d)$, viewed as
a subgroup of $W_d$.  Then $H$ is the kernel of the restriction of
$\pi_A$ to $R$, so $H$ is normal in $R$ and
$|H| = |R|/|A| = 6^d$.  If $c\in H$ fixes a vertex $y$ of $H(d,6)$, then
$(1,c)$ fixes $(x,y)$ for every $x$, and the semiregularity of $R$ gives
$c=1$; hence $H$ is semiregular on $\{1,\ldots,6\}^d$ and, having order
$6^d$, regular.  Conjugating $\{1\}\times H$ by $(a,c)\in R$ shows that
$cHc^{-1} = H$, so $\pi_W(R)\le N_{W_d}(H)$.  For $a\in A$, the fibre
$\{c : (a,c)\in R\}$ is nonempty because $\pi_A(R)=A$, and any two of its
elements differ by an element of $H$; so it is a coset of $H$ in
$N_{W_d}(H)$, and the map $\phi$ sending $a$ to this coset is a
homomorphism from $A$ to $N_{W_d}(H)/H$ with $R = R_\phi$.

Conversely, let $H$ be a regular subgroup of $W_d$ and let
$\phi: A\to N_{W_d}(H)/H$ be a homomorphism.  Since $H$ is normal in
$N_{W_d}(H)$, the set $R_\phi$ is a subgroup of $A\times W_d$, and its
fibre over each $a\in A$ is a coset of $H$, so
$|R_\phi| = |A||H| = 20\cdot6^d$.  Given vertices $(x,y)$ and
$(x',y')$, the regularity of $A$ provides $a$ with $a(x)=x'$; choosing
$c\in\phi(a)$ gives an element $(a,c)$ of $R_\phi$ taking $(x,y)$ to
$(x',c(y))$, and the regularity of $H$ provides $h$ with $h(c(y)) = y'$,
where $(1,h)\in R_\phi$ because $\phi(1) = H$.  So $R_\phi$ is
transitive.  If $(a,c)\in R_\phi$ fixes $(x,y)$, then $a$ fixes $x$, so
$a=1$ by the regularity of $A$; then $c\in\phi(1)=H$ and $c$ fixes $y$,
so $c=1$ by the regularity of $H$.  As $|R_\phi| = |V(\Gamma_d)|$, the
group $R_\phi$ is regular.

Next we identify $R_\phi$ up to isomorphism.  Fix a base point $\omega_0$
of $\{1,\ldots,6\}^d$ and write $N = N_{W_d}(H)$.  Since $H\le N$ is regular,
$N$ is transitive, so $|N| = |H|\,|N_{\omega_0}|$, and $H\cap N_{\omega_0} = 1$
because a nonidentity element of $H$ fixes no point.  Hence $N = H\rtimes
N_{\omega_0}$, and $N_{\omega_0}$ maps isomorphically onto $N/H$.  Conjugation
embeds $N_{\omega_0}$ into $\Aut(H)$: if $\eta\in N_{\omega_0}$ centralizes
$H$, then $\eta(h\omega_0) = h\eta(\omega_0) = h\omega_0$ for every $h\in
H$, so $\eta = 1$.  Let $\sigma\colon N/H\to N_{\omega_0}$ be the inverse of
the isomorphism above and let $\psi\colon A\to\Aut(H)$ be the composite of
$\phi$ with $\sigma$ and this embedding.  Every element of $R_\phi$ is uniquely
of the form $(a, h\,\sigma(\phi(a)))$ with $a\in A$ and $h\in H$, and since
$\sigma(\phi(a))\,h'\,\sigma(\phi(a))^{-1} = \psi(a)(h')$, the map
$(h,a)\mapsto (a, h\,\sigma(\phi(a)))$ is an isomorphism $H\rtimes_\psi A\to
R_\phi$.

Finally, $\{1\}\times H$ is the kernel of the restriction of $\pi_A$ to
$R_\phi$, so $R_\phi/(\{1\}\times H)\cong A\cong F_{20}$.  Every
quotient of an abelian group is abelian, so $R_\phi$ is not abelian. \qed

For $d=1$ the theorem can be made completely explicit.

\begin{cor}\label{cor:x14-k6}
    The graph $X_{14}\square K_6$ is a Cayley graph of exactly three
    groups:
    \[
        S_3\times F_{20},\qquad
        C_6\times F_{20},\qquad
        C_2\times(C_{15}\rtimes C_4).
    \]
\end{cor}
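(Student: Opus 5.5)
Specialize Theorem~\ref{thm:x14-family} to $d=1$. Here $W_1=S_6$ acts naturally on six points and $A\cong F_{20}$. Every regular subgroup has the form $R_\phi\cong H\rtimes_\psi F_{20}$, where $H$ is a regular subgroup of $S_6$ and $\phi\colon F_{20}\to N_{S_6}(H)/H$ is a homomorphism. So the plan is to list the possible $H$, compute $N_{S_6}(H)/H$ for each, classify the homomorphisms $\phi$, and identify the resulting semidirect products up to isomorphism.

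First, the regular subgroups of $S_6$ of degree $6$ are exactly the groups of order $6$ acting regularly: $C_6$ (generated by a $6$-cycle) and $S_3$ (acting regularly on itself). For each, $N_{S_6}(H)/H$ embeds in $\Aut(H)$ through the point stabilizer, as in the proof of the theorem. In fact $N_{S_6}(H)$ is the holomorph: for $H=C_6$ it is $C_6\rtimes C_2$ of order $12$, with quotient $C_2$, and for $H=S_3$ it is $\mathrm{Hol}(S_3)\cong S_3\times S_3$, with quotient $\Aut(S_3)\cong S_3$. I would check these normalizer orders directly, or by citing the standard fact that the normalizer of a regular subgroup in $\Sym$ is its holomorph.

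Next, classify $\phi$. The abelianization of $F_{20}$ is $C_4$, generated by the image of $b$, and $a$ lies in the kernel of every map to a group of order prime to $5$. So $\phi$ factors through $C_4$.
\begin{itemize}
\item For $H=C_6$, there is the trivial $\phi$, giving $C_6\times F_{20}$, and the map sending $b$ to inversion. The latter gives $C_6\rtimes F_{20}$ with $b$ inverting $C_6$. Writing $C_6=C_2\times C_3$, the $C_2$ is central, and the group is $C_2\times(C_3\rtimes F_{20})$, where $b$ inverts $C_3$ and $a$ centralizes it. Then $C_3\times C_5$ is cyclic of order $15$ and normal, with $b$ of order $4$ acting on it, so the group is $C_2\times(C_{15}\rtimes C_4)$.
\item For $H=S_3$, $\phi$ lands in the image of $C_4$ in $\Aut(S_3)=\Inn(S_3)\cong S_3$, so it is either trivial or sends $b$ to an inner automorphism of order $2$ (a transposition). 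The trivial map gives $S_3\times F_{20}$. In the other case the action is by an inner automorphism, and $H\rtimes_\psi A\cong H\times A$ whenever $\psi$ factors through $\Inn(H)$: twist $A$ by the corresponding elements of $H$, that is, replace $b$ by $h_0^{-1}b$ with $h_0\in H$ inducing $\psi(b)$. One must check that this twisted assignment still satisfies the relations of $F_{20}$; this works because $\psi(a)=1$ and the relations only constrain $b$ modulo the centralizer. So this case also gives $S_3\times F_{20}$.
\end{itemize}

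Finally, confirm that the three groups are pairwise non-isomorphic. $C_6\times F_{20}$ has centre of order $6$. $S_3\times F_{20}$ has trivial centre. $C_2\times(C_{15}\rtimes C_4)$ has centre of order $2$, since in $C_{15}\rtimes C_4$ the element $b$ acts faithfully on $C_{15}$ and so the centre there is trivial, or it contains $b^2$, which acts on $C_5$ by $-1$.

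The main obstacle is the careful bookkeeping in the $S_3$ case: showing that the inner-twist semidirect product collapses to the direct product, and checking that no homomorphism yields a fourth isomorphism type. I would cross-check the final list against a GAP enumeration of the regular subgroups of $F_{20}\times S_6$.
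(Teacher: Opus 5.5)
Your proposal is correct and follows the paper's proof essentially step for step. You specialize Theorem~\ref{thm:x14-family} to $d=1$, use that $N_{S_6}(H)$ is the holomorph so that $N_{S_6}(H)/H\cong\Aut(H)$ for $H\cong C_6, S_3$, observe that $\phi$ factors through a cyclic quotient of $F_{20}$, and collapse the inner-action case for $S_3$ to $S_3\times F_{20}$ by twisting $F_{20}$ into the centralizer of $S_3$. Two small remarks. Your blanket claim ``$H\rtimes_\psi A\cong H\times A$ whenever $\psi$ factors through $\Inn(H)$'' actually needs a homomorphic lift $A\to H$ and can fail when $Z(H)\neq 1$; here it holds because $Z(S_3)=1$, which is what the paper uses, or concretely because $h_0$ is a transposition, so $(h_0^{-1}b)^4=h_0^{-4}b^4=1$. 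Your check that the three groups are pairwise non-isomorphic, via centres of orders $6$, $1$ and $2$, is a small addition that the paper leaves implicit.
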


\proof Here $W_1 = S_6$.  
A regular subgroup $H$ of $S_6$ is the regular representation of a group 
  of order six, and two regular subgroups of $S_6$ are conjugate if and only if they are 
  isomorphic; so up to conjugacy $H$ is the regular representation of $C_6$ or of $S_3$.
In both cases the normalizer of $H$ in $S_6$ is the holomorph of $H$, so $N_{S_6}(H)/H\cong\Aut(H)$ \cite{dixon1996}.  Conjugating the pair $(H,\phi)$ by an element of $W_1$ replaces $R_\phi$ by a conjugate subgroup, so for the isomorphism types of the $R_\phi$ it suffices to run over the two subgroups $H$ above and over homomorphisms $\phi\colon F_{20}\to\Aut(H)$ up to conjugation by $\Aut(H)$. The group $F_{20}$ has a unique subgroup of index two, so there are exactly two homomorphisms from $F_{20}$ to $\Aut(C_6)\cong C_2$.  Since every proper quotient of $F_{20}$ is cyclic, every homomorphism from $F_{20}$ to $\Aut(S_3)\cong S_3$ has image of order at most two; there are four of them, the trivial one and three with image of order two, and the latter three are conjugate under $\Aut(S_3)$, leaving two classes.
By Theorem~\ref{thm:x14-family} the corresponding regular subgroup is
$H\rtimes_\psi F_{20}$ with $\psi = \phi$, and we identify the four resulting
groups in turn.  The two trivial homomorphisms give the direct products
$C_6\times F_{20}$ and $S_3\times F_{20}$.  For the nontrivial
$\psi\colon F_{20}\to\Aut(C_6)\cong C_2$, the generator of the image inverts
$C_6 = C_2\times C_3$, hence acts trivially on the factor $C_2$, so
$C_6\rtimes_\psi F_{20}\cong C_2\times(C_3\rtimes_\psi F_{20})$; in the second
factor the kernel $\langle a,b^2\rangle$ of $\psi$ centralizes $C_3$ and $b$
inverts it, while $bab^{-1} = a^2$, so $C_3\rtimes_\psi F_{20}\cong
C_{15}\rtimes C_4$ and the group is $C_2\times(C_{15}\rtimes C_4)$.  For a
nontrivial $\psi\colon F_{20}\to\Aut(S_3)$ the image lies in
$\Aut(S_3) = \Inn(S_3)$, so $\psi$ lifts to a homomorphism
$\hat\psi\colon F_{20}\to S_3$ with $\psi(g)$ conjugation by $\hat\psi(g)$;
as $Z(S_3) = 1$, the map $g\mapsto(\hat\psi(g)^{-1},g)$ is an injective
homomorphism $F_{20}\to S_3\rtimes_\psi F_{20}$ whose image centralizes
$S_3\times\{1\}$ and meets it trivially, so
$S_3\rtimes_\psi F_{20}\cong S_3\times F_{20}$.  The three isomorphism types
listed are therefore the Cayley groups of $X_{14}\square K_6$, by
Sabidussi's theorem. \qed

\begin{rem}
    The graphs $\Gamma_d$ admit more and more Cayley presentations as $d$
    grows, but by Theorem~\ref{thm:x14-family} every one of their Cayley
    groups surjects onto $F_{20}$.  The non-abelianness of this family is
    therefore a property of the graphs and not an artifact of a chosen
    presentation.
\end{rem}

We now summarize this section in a theorem.

\begin{thm}\label{thm:peak-family}
    For every $d\ge 0$ the graph $\Gamma_d = X_{14}\,\square\,H(d,6)$, on $20\cdot 6^d$ vertices, admits peak state transfer at time $\pi/3$ with amount $16/225$, independent of $d$.  Every group over which $\Gamma_d$ is a Cayley graph surjects onto $F_{20}$; in particular no $\Gamma_d$ is a Cayley graph of an abelian group. 
\end{thm}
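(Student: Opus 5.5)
The statement is a summary, so the plan is to assemble results already proved in this section rather than to introduce new arguments. There are two claims, and they are handled separately.

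\emph{Peak state transfer.} Begin with the base case $d=0$, which is the computation for $X_{14}$ in the first subsection. Take $u=1$ and $v\in\{a,a^{-1},b,b^{-1},ab^2\}$. The eigenvalue support of this pair is $\{14,2,-1\}$, with idempotent entries $1/20$, $1/12$ and $-2/15$. At $\tau=\pi/3$ the three phases align, so $|U(\tau)_{v,u}|=4/15$. This equals the spectral upper bound, so we have peak state transfer with amount $16/225$.

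For $d\ge1$, first show that $H(d,6)$ is periodic at every vertex at time $\pi/3$. The complete graph $K_6$ has eigenvalues $5$ and $-1$, and $e^{5\pi i/3}=e^{-\pi i/3}$, so $U_{K_6}(\pi/3)$ is a scalar matrix. Transition matrices of Cartesian products are tensor products of the factors' transition matrices, so this scalar property passes to $K_6^{\square d}$. Then apply Lemma~\ref{lem:peak-product} with $X=X_{14}$ and $Y=H(d,6)$. It gives peak state transfer from $(1,y)$ to $(v,y)$ at time $\pi/3$ with the same amount $16/225$. The amount is independent of $d$ because the lemma preserves it exactly; its proof already handles the possible merging of eigenvalues $\theta+\mu$ in the product. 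The vertex count $20\cdot6^d$ is immediate.

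\emph{Cayley groups.} By Sabidussi's theorem, the groups over which $\Gamma_d$ is a Cayley graph are exactly the isomorphism types of regular subgroups of $\Aut(\Gamma_d)$. By Theorem~\ref{thm:x14-family}, every such subgroup is some $R_\phi$. Restricting the projection $\pi_A$ to $R_\phi$ is surjective with kernel $\{1\}\times H$, so $R_\phi/(\{1\}\times H)\cong A\cong F_{20}$. The group $F_{20}$ is non-abelian, since $bab^{-1}=a^2\ne a$. A quotient of an abelian group is abelian, so no $R_\phi$ is abelian, and hence no $\Gamma_d$ is a Cayley graph of an abelian group.

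\emph{Where the difficulty lies.} Given the earlier results, this assembly is routine. The real weight sits in Theorem~\ref{thm:x14-family}, specifically in the identification $\Aut(\Gamma_d)=A\times W_d$. That identification depends on two facts: that $X_{14}$ is prime for the Cartesian product, and that $|\Aut(X_{14})|=20$, the latter being a computation. In writing the proof, the one point needing care is to verify that the hypotheses of Lemma~\ref{lem:peak-product} hold at the same time $\tau=\pi/3$ for both factors.
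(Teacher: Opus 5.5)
Your proposal is correct and follows the paper's proof essentially verbatim. The transfer comes from Lemma~\ref{lem:peak-product} applied to $X_{14}$ and the periodic Hamming graph $H(d,6)$, with $d=0$ being the direct computation on $X_{14}$; the claim about Cayley groups is read off from Theorem~\ref{thm:x14-family} via Sabidussi's theorem and the quotient $R_\phi/(\{1\}\times H)\cong F_{20}$.
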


  \proof Lemma~\ref{lem:peak-product}, applied to $X_{14}$ and the periodic
  graph $H(d,6)$, gives the transfer (for $d=0$ note $H(0,6)=K_1$ and
  $\Gamma_0 = X_{14}$); and every Cayley group of $\Gamma_d$ surjects onto
  $F_{20}$ by Theorem~\ref{thm:x14-family}.
  The result follows.\qed

We note that the amount $16/225$ is modest, and we regard this family as structural evidence rather than as a strong transfer example; finding an infinite family of non-abelian examples with amount bounded away from zero by a meaningful constant remains open.

\section{Conclusions and open problems}

We end with a comment on method and future directions.  Finding
perfect state transfer on Cayley graphs of genuinely non-abelian groups
is an interesting problem: Section~\ref{sec:counterexamples} shows that
exclusively non-abelian PST graphs exist, on $24$ vertices over $S_4$ and
$\SL(2,3)$ and on $32$ vertices as graphical regular
representations, and Theorem~\ref{thm:pansin-sl} shows that they
exist in infinite families.  Very little else is known about them, and
the record of the literature suggests why: they are hard to find.  A
common approach in recent years
\cite{CaoFen2021,AreShaGho2022,WanCao2024,WanFen2023-bicayley,KalBha2024}
is to pick a group that is nearly abelian, dihedral, dicyclic,
generalized dihedral, $V_{8n}$, assume the connection set is closed under
conjugation so that the eigenvalues can be read off the character table,
and translate the spectral PST criteria into conditions on
character sums.

Lemmas~\ref{lem:abelian-index-two}--\ref{lem:gendihedral} show that this
approach can never produce a genuinely new PST graph: a
conjugacy-closed connection set on a group with an abelian subgroup of
index two forces the graph to be a Cayley graph of an abelian group, and
the eigenvalue conditions for perfect state transfer on abelian Cayley
graphs are already known \cite{TanFenCao2019}.  Accordingly, every
explicit example in \cite{CaoFen2021,AreShaGho2022,WanCao2024,WanFen2023-bicayley,KalBha2024} is an abelian Cayley graph in
a non-abelian presentation (see Example~\ref{ex:wangfeng},
Remark~\ref{rem:some-families}), or even simply $K_{2n}$ minus a perfect
matching.  We acknowledge that there can be value in constructing known
PST graphs in different ways, but it should always be made explicit that
the graphs obtained are new presentations of known examples rather than
new examples, and this has not been done in the cited literature. Both reductions, moreover, were available in the isomorphic-Cayley-graph literature before most of the constructions appeared \cite{morris2021two,MorrisSkelton}; the two literatures do not appear to have been aware of each other.

Escaping the reach of these lemmas takes more than avoiding index two.
Extraspecial $2$-groups have no abelian subgroup of index two, and the
conjugacy-class framework of \cite{sin2020} is the natural setting for
eigenvalue computations over them; yet Lemma~\ref{lem:extraspecial}
shows that every Cayley graph arising in this setting is cubelike.  The
graphs of \cite{sin2020} and \cite{PanSin2026} share a structure: the
state transfer runs from $g$ to $zg$ for the central involution $z$, the
connection set satisfies $zS = S$, so the pairs $\{g, zg\}$ are twins and
the graph is a blow-up of a Cayley graph of $G/\langle z\rangle$
(Lemma~\ref{lem:twin-reduction}).  Whether the construction gives
something new is decided by this quotient.  For an extraspecial group the
quotient is elementary abelian, and nothing new can appear.  For the
linear groups of \cite{PanSin2026} the quotient contains
$\mathrm{PSL}(2,q)$, and Section~\ref{sec:pansin} shows that their graphs
are indeed new.  In particular, conjugacy-closed 
connection sets are not the obstacle: over groups far from abelian they
remain a promising source of new examples.

It is always good practice, when constructing a family of Cayley graphs
  with a particular property, to compute the automorphism groups and the
  regular subgroups of its small members.  This can be feasible even when the automorphism group looks hopelessly large at first glance: for the graphs of Proposition~\ref{prop:pansin-glgu} it has order up to $2^{368}\cdot3^3\cdot5^2$, yet the twin quotient reduces each computation to a group of order at most $172800$.  
 The peak state transfer family of Section~\ref{sec:peak-ref} is an example of
  this practice carried out in full: Theorem~\ref{thm:x14-family}
  determines every regular subgroup of the automorphism group of every
  member.
Each graph therefore comes with the
complete list of its Cayley groups, and no member of the family can
later be rediscovered as a new example in a different presentation.

This is not to say that perfect state transfer in Cayley graphs for abelian groups is settled.  A characterization of the connection sets giving PST builds examples, whereas a characterization in terms of the eigenvalues does not; this is what Ba\v{s}i\'{c} did for circulants \cite{basic2013char} and \'{A}rnad\'{o}ttir and Godsil for abelian groups with cyclic Sylow $2$-subgroups \cite{arnadottir2022pst}, and in general it is open (Problem~\ref{prob:connection-set}).  

The most immediate open question is the one we left in Section~\ref{sec:pansin}.  Theorem~\ref{thm:pansin-sl} settles the  $\SL(2,q)$ family of Pantangi and Sin, but its proof passes to the quotient   $\mathrm{PSL}(2,q)$ and relies on that group being simple; for the  $\GL(2,q)$ and $\GU(2,q)$ families of \cite{PanSin2026} the corresponding  quotients are not simple, and we know only by computation that the four  smallest members beyond the degenerate case $q=3$ have no abelian Cayley  presentation (Proposition~\ref{prop:pansin-glgu}).  Problem~\ref{prob:pansin-stability}  asks whether this persists for all $q\ge7$, that is, whether the  $q=3$ case is the only member of either family that is a Cayley graph  of an abelian group. 
 
 We close with three further  problems suggested by our results.

\begin{prob}\label{prob:connection-set} Characterize, in terms of the connection set rather than the spectrum, the Cayley graphs over abelian groups that admit perfect state transfer.  This is known for circulants \cite{basic2013char} and for abelian groups with cyclic Sylow $2$-subgroups \cite{arnadottir2022pst}.  The same question for conjugacy-closed Cayley graphs over non-abelian groups with no abelian subgroup of index two is open and is not resolved  by Lemma~\ref{lem:nonabel-normal-2}.
\end{prob}

  Graphical regular representations are the extreme case of our theme: such
  a graph is a Cayley graph of one group in one way, so perfect state
  transfer on it cannot be explained away by a hidden abelian presentation.
  Our partial computation found  $38$ such examples of order $32$ with transfer time $\pi/2$. Theorem~\ref{thm:pansin-sl} does not settle the question: its graphs
  have a twin at every vertex, so their automorphism groups have order at
  least $2^{|V|/2}$.
  
  \begin{prob}
      Do there exist infinitely many perfect state transfer graphical
      regular representations of non-abelian groups?  In particular, do
      the $32$-vertex examples of Section~\ref{sec:grrs} lie in an
      infinite family?
  \end{prob}

Every graphical regular representation of an abelian group is
  cubelike: if the group has an element of order greater than two, then
  inversion is a non-identity automorphism fixing the identity in every
  Cayley graph of the group, so an abelian group with a graphical regular
  representation is an elementary abelian $2$-group.  Moreover,
  $\Z_2^d$ admits a graphical regular representation only for $d=1$ and
  $d\ge5$. 
  See \cite{Imr1970, ImrWat1976}. 
  The graph $K_2$, row $X_{2a}$ of the census, is trivially such an
  example with perfect state transfer, and the smallest examples beyond
  it have $32$ vertices. For example, let
  \[
  \begin{aligned}
  S=\{&
  00011,\ 00111,\ 01000,\ 01001,\ 01010,\ 01100,\\
  &01101,\ 01110,\ 01111,\ 10001,\ 10111,\ 11111
  \}\subseteq\{0,1\}^{5},
  \end{aligned}
  \]
  where we write $\Z_2^5$ as $\{0,1\}^{5}$ in the usual way. Then $X=\Cay(\Z_2^5,S)$ 
  is a connected $12$-regular graphical regular representation of
  $\Z_2^5$ admitting perfect state transfer at time $\pi/2$, and, since the scan is complete for valency at most $11$ and produced no cubelike graphical regular representation there, $12$ is the smallest valency of one on $32$ vertices.\footnote{In graph6 format
  \Verb"_DGUHGuiPQGBDavGoAGCxsaDlKpr@mMKR_aHrkIGpeBdEeCiRWaRPKPGdZ@?USKNs_O|GbUw@O_xDM]jKWP?".}

  \begin{prob}\label{prob:cubelike-grr}
  Are there infinitely many cubelike graphical regular representations
  admitting perfect state transfer?  What is the smallest valency of a cubelike graphical regular representation with perfect state transfer on $2^d$ vertices, as a function of $d$? 
  \end{prob}

\subsection*{AI usage declaration}

We used Claude (Anthropic) Fable 5 and Opus models and ChatGPT 5.6 Sol (OpenAI).  They helped develop, test and write all of our SageMath
  and GAP code and produced the table in the appendix, and it let us
  preview results: many statements in this paper were verified
  computationally, in small cases or in full, before we found their
  proofs.   It also drafted portions of the text and the figures, under our supervision.

\subsection*{Acknowledgements}
The authors would like to thank Chris Godsil for helpful discussions and comments.

\appendix

\section{Census of vertex-transitive graphs with perfect state transfer}
\label{app:census}

The connected vertex-transitive graphs on $10$ to $30$ vertices are taken from the census of \cite{RoyleHolt2020VTGraphs}; those on at most $9$ vertices we enumerated directly.
The table lists every connected vertex-transitive graph on at most $30$ vertices with PST. There are no
connected vertex-transitive graphs with PST on an odd number of vertices or on $6$, $10$, $14$, $18$, $22$ or $26$ vertices.  Perfect state transfer at time $\tau$ forces periodicity at
$2\tau$, and a connected regular periodic graph has integral spectrum. In the table, we give the spectra of the graphs; they are written as $\lambda^{(m)}$ with
multiplicities $m$.  All of the PST in the table occurs at time $\pi/2$.
  The last column
lists every group $G$, up to isomorphism and in GAP structure-description
notation, such that the graph is a Cayley graph of $G$. We list the abelian groups after ``ab'' and non-abelian after ``non-ab''. In this
notation, $G{:}H$ denotes a split extension of $G$ by $H$, that is, a
semidirect product $G\rtimes H$ with normal subgroup $G$; since the structure
description does not always determine the group up to isomorphism, each group
is followed by its GAP \textsf{SmallGroup} ID $(o,i)$, meaning the $i$-th
group of order $o$ in the \textsf{SmallGroups} library.
 All $91$ vertex-transitive graphs on up to $30$ vertices admitting PST  are published
alongside this paper as \texttt{pst-census-2-30.csv} (label, graph6 string). The $344$ graphs of the partial census at $32$ vertices
(valencies $k\le 12$, complete only up to $k=11$, labels $X_{32a}$ onward), whose group data is discussed
in Section~\ref{sec:counterexamples}, are also published alongside this paper as \texttt{pst-census-32.csv}. Due to lack of time and computing resources, we could not finish the full computation of the vertex-transitive graphs on $32$ vertices.

\subsection*{Computational methods}

All computations were carried out in SageMath\cite{sage} and GAP\cite{gap}.  There are two stages: determining which of the vertex-transitive graphs which admit PST, and finding their Cayley groups.

\subsubsection*{Detecting perfect state transfer}  Every connected vertex-transitive graph on at most $30$ vertices in the census \cite{RoyleHolt2020VTGraphs} was tested for perfect state transfer, as follows.  By vertex-transitivity, it suffices to test the pairs $(u,v)$ with $u$ a fixed vertex.  A pair admits perfect state transfer if and only if it is strongly cospectral, meaning that $E_\theta e_u = \pm E_\theta e_v$ for every spectral idempotent $E_\theta$, and in addition admits peak state transfer; the latter is decided by the spectral characterization of peak state transfer \cite[Theorem~5.3]{CouGuoSch2025}, which requires the eigenvalues in the eigenvalue support of the pair to be integers, or quadratic integers in a common field $\Q(\sqrt{D})$, satisfying explicit divisibility and parity conditions, and which determines the minimal time $\tau_0$.  The scan handles the eigenvalues exactly, as roots of the irreducible factors of the characteristic polynomial over $\Q$, but computes the idempotent entries in floating point.  Every graph with PST detected was then certified afterwards by an exact computation: each of the $435$ graphs (the $91$ graphs of the table below, published as \texttt{pst-census-2-30.csv}, together with the $344$ graphs on $32$ vertices in \texttt{pst-census-32.csv}) has integral spectrum, so $U(\tau_0)$ is a matrix over a cyclotomic field, and we verified symbolically that $U(\tau_0) = \gamma T$ with $\gamma$ unimodular and $T$ a fixed-point-free symmetric permutation matrix over $\Q(i)$ with $\tau_0 = \pi/2$ for every graph except $X_{32kg}$, where $\tau_0 = \pi/4$ and the computation is over $\Q(\zeta_8)$.

\subsubsection*{Computing the Cayley groups}  For each graph $X$ we compute $\Aut(X)$ in SageMath.  By Sabidussi's theorem, $X$ is a Cayley graph of a group $G$ of order $n$ if and only if $\Aut(X)$ contains a transitive subgroup isomorphic to $G$; such a subgroup has order $n$ and is thus a regular subgroup.  We run over the isomorphism types of groups of order $n$ in the GAP \textsf{SmallGroups} library, search for embeddings with \textsf{IsomorphicSubgroups}, and check the images for transitivity.  Each search is given a time budget. For some graphs whose automorphism groups have order roughly $10^9$, the enumeration of all subgroups did not terminate in the time budget and, instead, we generated all Cayley graphs of all groups of orders $16$ and $24$ and checked for which groups the graph appeared as a Cayley graph.

To give an idea of how we exhaustively generated all regular subgroups, we used some elementary group theory to assist our search. For example,  when $n$ is a power of two, every regular subgroup of $\Aut(X)$ is a $2$-group and hence lies in a Sylow $2$-subgroup $P$; starting from $P$, we repeatedly pass to maximal subgroups and discard the intransitive ones (since every subgroup between a regular subgroup and $P$ is transitive) until reaching order $n$.

\begingroup\footnotesize
\begin{longtable}{@{}l r r >{\raggedright\arraybackslash}p{0.35\textwidth} >{\raggedright\arraybackslash}p{0.40\textwidth}@{}}
\toprule
graph & $n$ & $k$ & spectrum & Cayley groups \\
\midrule
\endfirsthead
\toprule
graph & $n$ & $k$ & spectrum & Cayley groups \\
\midrule
\endhead
\bottomrule
\endlastfoot
  $X_{2a}$ & $2$ & $1$ & $1\,(-1)$ & \emph{ab:} $C_{2}$ {\scriptsize(2,1)} (GRR) \\
  \midrule
  $X_{4a}$ & $4$ & $2$ & $2\,0^{(2)}\,(-2)$ & all 2 groups of order $4$ \\
  \midrule
  $X_{8a}$ & $8$ & $3$ & $3\,1^{(3)}\,(-1)^{(3)}\,(-3)$ & \emph{ab:} $C_{4}{\times}C_{2}$ {\scriptsize(8,2)},
  $C_{2}{\times}C_{2}{\times}C_{2}$ {\scriptsize(8,5)}; \emph{non-ab:} $D_{8}$ {\scriptsize(8,3)} \\
  $X_{8b}$ & $8$ & $4$ & $4\,2\,0^{(3)}\,(-2)^{(3)}$ & \emph{ab:} $C_{4}{\times}C_{2}$ {\scriptsize(8,2)},
  $C_{2}{\times}C_{2}{\times}C_{2}$ {\scriptsize(8,5)}; \emph{non-ab:} $D_{8}$ {\scriptsize(8,3)} \\
  $X_{8c}$ & $8$ & $5$ & $5\,1^{(2)}\,(-1)^{(4)}\,(-3)$ & all 5 groups of order $8$ \\
  $X_{8d}$ & $8$ & $6$ & $6\,0^{(4)}\,(-2)^{(3)}$ & all 5 groups of order $8$ \\
  \midrule
    $X_{12a}$ & $12$ & $9$ & $9\,1^{(3)}\,(-1)^{(6)}\,(-3)^{(2)}$ & all 5 groups of order $12$ \\
$X_{12b}$ & $12$ & $10$ & $10\,0^{(6)}\,(-2)^{(5)}$ & all 5 groups of order $12$ \\
\midrule
$X_{16a}$ & $16$ & $4$ & $4\,2^{(4)}\,0^{(6)}\,(-2)^{(4)}\,(-4)$ & \emph{ab:} $C_{4}{\times}C_{4}$ {\scriptsize(16,2)}, $C_{4}{\times}C_{2}{\times}C_{2}$ {\scriptsize(16,10)}, $C_{2}{\times}C_{2}{\times}C_{2}{\times}C_{2}$ {\scriptsize(16,14)}; \emph{non-ab:} $(C_{4}{\times}C_{2}){:}C_{2}$ {\scriptsize(16,3)}, $C_{4}{:}C_{4}$ {\scriptsize(16,4)}, $C_{8}{:}C_{2}$ {\scriptsize(16,6)}, $QD_{16}$ {\scriptsize(16,8)}, $C_{2}{\times}D_{8}$ {\scriptsize(16,11)} \\
$X_{16b}$ & $16$ & $5$ & $5\,3\,1^{(6)}\,(-1)^{(6)}\,(-3)\,(-5)$ & \emph{ab:} $C_{8}{\times}C_{2}$ {\scriptsize(16,5)}; \emph{non-ab:} $(C_{4}{\times}C_{2}){:}C_{2}$ {\scriptsize(16,3)}, $C_{8}{:}C_{2}$ {\scriptsize(16,6)}, $D_{16}$ {\scriptsize(16,7)}, $QD_{16}$ {\scriptsize(16,8)}, $C_{2}{\times}Q_{8}$ {\scriptsize(16,12)} \\
$X_{16c}$ & $16$ & $5$ & $5\,3^{(2)}\,1^{(4)}\,(-1)^{(6)}\,(-3)^{(3)}$ & \emph{ab:} $C_{4}{\times}C_{4}$ {\scriptsize(16,2)}, $C_{4}{\times}C_{2}{\times}C_{2}$ {\scriptsize(16,10)}; \emph{non-ab:} $(C_{4}{\times}C_{2}){:}C_{2}$ {\scriptsize(16,3)}, $C_{4}{:}C_{4}$ {\scriptsize(16,4)}, $C_{2}{\times}D_{8}$ {\scriptsize(16,11)} \\
$X_{16d}$ & $16$ & $6$ & $6\,2^{(3)}\,0^{(8)}\,(-2)^{(3)}\,(-6)$ & \emph{ab:} $C_{4}{\times}C_{4}$ {\scriptsize(16,2)}, $C_{8}{\times}C_{2}$ {\scriptsize(16,5)}, $C_{4}{\times}C_{2}{\times}C_{2}$ {\scriptsize(16,10)}, $C_{2}{\times}C_{2}{\times}C_{2}{\times}C_{2}$ {\scriptsize(16,14)}; \emph{non-ab:} $(C_{4}{\times}C_{2}){:}C_{2}$ {\scriptsize(16,3)}, $C_{4}{:}C_{4}$ {\scriptsize(16,4)}, $C_{8}{:}C_{2}$ {\scriptsize(16,6)}, $D_{16}$ {\scriptsize(16,7)}, $QD_{16}$ {\scriptsize(16,8)}, $Q_{16}$ {\scriptsize(16,9)}, $C_{2}{\times}D_{8}$ {\scriptsize(16,11)}, $C_{2}{\times}Q_{8}$ {\scriptsize(16,12)}, $(C_{4}{\times}C_{2}){:}C_{2}$ {\scriptsize(16,13)} \\
$X_{16e}$ & $16$ & $6$ & $6\,2^{(4)}\,0^{(6)}\,(-2)^{(3)}\,(-4)^{(2)}$ & \emph{ab:} $C_{4}{\times}C_{4}$ {\scriptsize(16,2)}, $C_{4}{\times}C_{2}{\times}C_{2}$ {\scriptsize(16,10)}; \emph{non-ab:} $(C_{4}{\times}C_{2}){:}C_{2}$ {\scriptsize(16,3)}, $C_{4}{:}C_{4}$ {\scriptsize(16,4)}, $C_{2}{\times}D_{8}$ {\scriptsize(16,11)} \\
$X_{16f}$ & $16$ & $6$ & $6\,4\,2^{(2)}\,0^{(6)}\,(-2)^{(5)}\,(-4)$ & \emph{ab:} $C_{8}{\times}C_{2}$ {\scriptsize(16,5)}; \emph{non-ab:} $(C_{4}{\times}C_{2}){:}C_{2}$ {\scriptsize(16,3)}, $C_{8}{:}C_{2}$ {\scriptsize(16,6)}, $D_{16}$ {\scriptsize(16,7)}, $QD_{16}$ {\scriptsize(16,8)}, $C_{2}{\times}Q_{8}$ {\scriptsize(16,12)} \\
$X_{16g}$ & $16$ & $7$ & $7\,1^{(7)}\,(-1)^{(7)}\,(-7)$ & \emph{ab:} $C_{8}{\times}C_{2}$ {\scriptsize(16,5)}, $C_{4}{\times}C_{2}{\times}C_{2}$ {\scriptsize(16,10)}; \emph{non-ab:} $(C_{4}{\times}C_{2}){:}C_{2}$ {\scriptsize(16,3)}, $C_{8}{:}C_{2}$ {\scriptsize(16,6)}, $D_{16}$ {\scriptsize(16,7)}, $QD_{16}$ {\scriptsize(16,8)}, $C_{2}{\times}D_{8}$ {\scriptsize(16,11)}, $C_{2}{\times}Q_{8}$ {\scriptsize(16,12)}, $(C_{4}{\times}C_{2}){:}C_{2}$ {\scriptsize(16,13)} \\
$X_{16h}$ & $16$ & $7$ & $7\,3\,1^{(6)}\,(-1)^{(5)}\,(-3)^{(2)}\,(-5)$ & \emph{ab:} $C_{4}{\times}C_{4}$ {\scriptsize(16,2)}; \emph{non-ab:} $(C_{4}{\times}C_{2}){:}C_{2}$ {\scriptsize(16,3)}, $C_{4}{:}C_{4}$ {\scriptsize(16,4)}, $C_{2}{\times}Q_{8}$ {\scriptsize(16,12)} \\
$X_{16i}$ & $16$ & $7$ & $7\,5\,1^{(4)}\,(-1)^{(7)}\,(-3)^{(3)}$ & \emph{ab:} $C_{8}{\times}C_{2}$ {\scriptsize(16,5)}, $C_{4}{\times}C_{2}{\times}C_{2}$ {\scriptsize(16,10)}; \emph{non-ab:} $(C_{4}{\times}C_{2}){:}C_{2}$ {\scriptsize(16,3)}, $C_{8}{:}C_{2}$ {\scriptsize(16,6)}, $D_{16}$ {\scriptsize(16,7)}, $QD_{16}$ {\scriptsize(16,8)}, $C_{2}{\times}Q_{8}$ {\scriptsize(16,12)} \\
$X_{16j}$ & $16$ & $7$ & $7\,3^{(2)}\,1^{(4)}\,(-1)^{(5)}\,(-3)^{(4)}$ & \emph{ab:} $C_{4}{\times}C_{4}$ {\scriptsize(16,2)}; \emph{non-ab:} $C_{8}{:}C_{2}$ {\scriptsize(16,6)}, $QD_{16}$ {\scriptsize(16,8)}, $C_{2}{\times}D_{8}$ {\scriptsize(16,11)} \\
$X_{16k}$ & $16$ & $7$ & $7\,3^{(2)}\,1^{(4)}\,(-1)^{(5)}\,(-3)^{(4)}$ & \emph{ab:} $C_{4}{\times}C_{4}$ {\scriptsize(16,2)}, $C_{4}{\times}C_{2}{\times}C_{2}$ {\scriptsize(16,10)}; \emph{non-ab:} $(C_{4}{\times}C_{2}){:}C_{2}$ {\scriptsize(16,3)}, $C_{4}{:}C_{4}$ {\scriptsize(16,4)}, $C_{8}{:}C_{2}$ {\scriptsize(16,6)}, $QD_{16}$ {\scriptsize(16,8)}, $C_{2}{\times}D_{8}$ {\scriptsize(16,11)} \\
$X_{16l}$ & $16$ & $8$ & $8\,2^{(4)}\,0^{(5)}\,(-2)^{(4)}\,(-4)^{(2)}$ & \emph{ab:} $C_{4}{\times}C_{4}$ {\scriptsize(16,2)}; \emph{non-ab:} $C_{8}{:}C_{2}$ {\scriptsize(16,6)}, $QD_{16}$ {\scriptsize(16,8)}, $C_{2}{\times}D_{8}$ {\scriptsize(16,11)} \\
$X_{16m}$ & $16$ & $8$ & $8\,2^{(4)}\,0^{(5)}\,(-2)^{(4)}\,(-4)^{(2)}$ & \emph{ab:} $C_{4}{\times}C_{4}$ {\scriptsize(16,2)}, $C_{4}{\times}C_{2}{\times}C_{2}$ {\scriptsize(16,10)}; \emph{non-ab:} $(C_{4}{\times}C_{2}){:}C_{2}$ {\scriptsize(16,3)}, $C_{4}{:}C_{4}$ {\scriptsize(16,4)}, $C_{8}{:}C_{2}$ {\scriptsize(16,6)}, $QD_{16}$ {\scriptsize(16,8)}, $C_{2}{\times}D_{8}$ {\scriptsize(16,11)} \\
$X_{16n}$ & $16$ & $8$ & $8\,2^{(3)}\,0^{(7)}\,(-2)^{(4)}\,(-6)$ & \emph{ab:} $C_{8}{\times}C_{2}$ {\scriptsize(16,5)}; \emph{non-ab:} $(C_{4}{\times}C_{2}){:}C_{2}$ {\scriptsize(16,3)}, $C_{8}{:}C_{2}$ {\scriptsize(16,6)}, $D_{16}$ {\scriptsize(16,7)}, $QD_{16}$ {\scriptsize(16,8)}, $C_{2}{\times}Q_{8}$ {\scriptsize(16,12)} \\
$X_{16o}$ & $16$ & $8$ & $8\,4\,2^{(2)}\,0^{(5)}\,(-2)^{(6)}\,(-4)$ & \emph{ab:} $C_{4}{\times}C_{4}$ {\scriptsize(16,2)}; \emph{non-ab:} $(C_{4}{\times}C_{2}){:}C_{2}$ {\scriptsize(16,3)}, $C_{4}{:}C_{4}$ {\scriptsize(16,4)}, $C_{2}{\times}Q_{8}$ {\scriptsize(16,12)} \\
$X_{16p}$ & $16$ & $8$ & $8\,6\,0^{(7)}\,(-2)^{(7)}$ & \emph{ab:} $C_{8}{\times}C_{2}$ {\scriptsize(16,5)}, $C_{4}{\times}C_{2}{\times}C_{2}$ {\scriptsize(16,10)}; \emph{non-ab:} $(C_{4}{\times}C_{2}){:}C_{2}$ {\scriptsize(16,3)}, $C_{8}{:}C_{2}$ {\scriptsize(16,6)}, $D_{16}$ {\scriptsize(16,7)}, $QD_{16}$ {\scriptsize(16,8)}, $C_{2}{\times}D_{8}$ {\scriptsize(16,11)}, $(C_{4}{\times}C_{2}){:}C_{2}$ {\scriptsize(16,13)} \\
$X_{16q}$ & $16$ & $9$ & $9\,3\,1^{(5)}\,(-1)^{(6)}\,(-3)^{(2)}\,(-5)$ & \emph{ab:} $C_{8}{\times}C_{2}$ {\scriptsize(16,5)}; \emph{non-ab:} $(C_{4}{\times}C_{2}){:}C_{2}$ {\scriptsize(16,3)}, $C_{8}{:}C_{2}$ {\scriptsize(16,6)}, $D_{16}$ {\scriptsize(16,7)}, $QD_{16}$ {\scriptsize(16,8)}, $C_{2}{\times}Q_{8}$ {\scriptsize(16,12)} \\
$X_{16r}$ & $16$ & $9$ & $9\,1^{(6)}\,(-1)^{(8)}\,(-7)$ & \emph{ab:} $C_{16}$ {\scriptsize(16,1)}, $C_{4}{\times}C_{4}$ {\scriptsize(16,2)}, $C_{8}{\times}C_{2}$ {\scriptsize(16,5)}, $C_{4}{\times}C_{2}{\times}C_{2}$ {\scriptsize(16,10)}, $C_{2}{\times}C_{2}{\times}C_{2}{\times}C_{2}$ {\scriptsize(16,14)}; \emph{non-ab:} $(C_{4}{\times}C_{2}){:}C_{2}$ {\scriptsize(16,3)}, $C_{4}{:}C_{4}$ {\scriptsize(16,4)}, $C_{8}{:}C_{2}$ {\scriptsize(16,6)}, $D_{16}$ {\scriptsize(16,7)}, $QD_{16}$ {\scriptsize(16,8)}, $Q_{16}$ {\scriptsize(16,9)}, $C_{2}{\times}D_{8}$ {\scriptsize(16,11)}, $C_{2}{\times}Q_{8}$ {\scriptsize(16,12)}, $(C_{4}{\times}C_{2}){:}C_{2}$ {\scriptsize(16,13)} \\
$X_{16s}$ & $16$ & $9$ & $9\,3^{(2)}\,1^{(3)}\,(-1)^{(6)}\,(-3)^{(4)}$ & \emph{ab:} $C_{4}{\times}C_{4}$ {\scriptsize(16,2)}, $C_{4}{\times}C_{2}{\times}C_{2}$ {\scriptsize(16,10)}; \emph{non-ab:} $(C_{4}{\times}C_{2}){:}C_{2}$ {\scriptsize(16,3)}, $C_{4}{:}C_{4}$ {\scriptsize(16,4)}, $C_{2}{\times}D_{8}$ {\scriptsize(16,11)} \\
$X_{16t}$ & $16$ & $9$ & $9\,5\,1^{(3)}\,(-1)^{(8)}\,(-3)^{(3)}$ & \emph{ab:} $C_{4}{\times}C_{4}$ {\scriptsize(16,2)}, $C_{8}{\times}C_{2}$ {\scriptsize(16,5)}, $C_{4}{\times}C_{2}{\times}C_{2}$ {\scriptsize(16,10)}, $C_{2}{\times}C_{2}{\times}C_{2}{\times}C_{2}$ {\scriptsize(16,14)}; \emph{non-ab:} $(C_{4}{\times}C_{2}){:}C_{2}$ {\scriptsize(16,3)}, $C_{4}{:}C_{4}$ {\scriptsize(16,4)}, $C_{8}{:}C_{2}$ {\scriptsize(16,6)}, $D_{16}$ {\scriptsize(16,7)}, $QD_{16}$ {\scriptsize(16,8)}, $Q_{16}$ {\scriptsize(16,9)}, $C_{2}{\times}D_{8}$ {\scriptsize(16,11)}, $C_{2}{\times}Q_{8}$ {\scriptsize(16,12)}, $(C_{4}{\times}C_{2}){:}C_{2}$ {\scriptsize(16,13)} \\
$X_{16u}$ & $16$ & $10$ & $10\,2^{(2)}\,0^{(8)}\,(-2)^{(4)}\,(-6)$ & \emph{ab:} $C_{16}$ {\scriptsize(16,1)}, $C_{4}{\times}C_{4}$ {\scriptsize(16,2)}, $C_{8}{\times}C_{2}$ {\scriptsize(16,5)}, $C_{4}{\times}C_{2}{\times}C_{2}$ {\scriptsize(16,10)}, $C_{2}{\times}C_{2}{\times}C_{2}{\times}C_{2}$ {\scriptsize(16,14)}; \emph{non-ab:} $(C_{4}{\times}C_{2}){:}C_{2}$ {\scriptsize(16,3)}, $C_{4}{:}C_{4}$ {\scriptsize(16,4)}, $C_{8}{:}C_{2}$ {\scriptsize(16,6)}, $D_{16}$ {\scriptsize(16,7)}, $QD_{16}$ {\scriptsize(16,8)}, $Q_{16}$ {\scriptsize(16,9)}, $C_{2}{\times}D_{8}$ {\scriptsize(16,11)}, $C_{2}{\times}Q_{8}$ {\scriptsize(16,12)}, $(C_{4}{\times}C_{2}){:}C_{2}$ {\scriptsize(16,13)} \\
$X_{16v}$ & $16$ & $10$ & $10\,2^{(3)}\,0^{(6)}\,(-2)^{(4)}\,(-4)^{(2)}$ & \emph{ab:} $C_{4}{\times}C_{4}$ {\scriptsize(16,2)}, $C_{4}{\times}C_{2}{\times}C_{2}$ {\scriptsize(16,10)}; \emph{non-ab:} $(C_{4}{\times}C_{2}){:}C_{2}$ {\scriptsize(16,3)}, $C_{4}{:}C_{4}$ {\scriptsize(16,4)}, $C_{2}{\times}D_{8}$ {\scriptsize(16,11)} \\
$X_{16w}$ & $16$ & $10$ & $10\,4\,2\,0^{(6)}\,(-2)^{(6)}\,(-4)$ & \emph{ab:} $C_{8}{\times}C_{2}$ {\scriptsize(16,5)}; \emph{non-ab:} $(C_{4}{\times}C_{2}){:}C_{2}$ {\scriptsize(16,3)}, $C_{8}{:}C_{2}$ {\scriptsize(16,6)}, $D_{16}$ {\scriptsize(16,7)}, $QD_{16}$ {\scriptsize(16,8)}, $C_{2}{\times}Q_{8}$ {\scriptsize(16,12)} \\
$X_{16x}$ & $16$ & $11$ & $11\,1^{(6)}\,(-1)^{(6)}\,(-3)^{(2)}\,(-5)$ & \emph{ab:} $C_{4}{\times}C_{4}$ {\scriptsize(16,2)}, $C_{8}{\times}C_{2}$ {\scriptsize(16,5)}; \emph{non-ab:} $(C_{4}{\times}C_{2}){:}C_{2}$ {\scriptsize(16,3)}, $C_{4}{:}C_{4}$ {\scriptsize(16,4)}, $C_{8}{:}C_{2}$ {\scriptsize(16,6)}, $D_{16}$ {\scriptsize(16,7)}, $QD_{16}$ {\scriptsize(16,8)} \\
$X_{16y}$ & $16$ & $11$ & $11\,3\,1^{(4)}\,(-1)^{(6)}\,(-3)^{(4)}$ & \emph{ab:} $C_{4}{\times}C_{4}$ {\scriptsize(16,2)}, $C_{4}{\times}C_{2}{\times}C_{2}$ {\scriptsize(16,10)}; \emph{non-ab:} $(C_{4}{\times}C_{2}){:}C_{2}$ {\scriptsize(16,3)}, $C_{4}{:}C_{4}$ {\scriptsize(16,4)}, $C_{8}{:}C_{2}$ {\scriptsize(16,6)}, $QD_{16}$ {\scriptsize(16,8)}, $C_{2}{\times}D_{8}$ {\scriptsize(16,11)} \\
$X_{16z}$ & $16$ & $12$ & $12\,2^{(2)}\,0^{(6)}\,(-2)^{(6)}\,(-4)$ & \emph{ab:} $C_{4}{\times}C_{4}$ {\scriptsize(16,2)}, $C_{8}{\times}C_{2}$ {\scriptsize(16,5)}; \emph{non-ab:} $(C_{4}{\times}C_{2}){:}C_{2}$ {\scriptsize(16,3)}, $C_{4}{:}C_{4}$ {\scriptsize(16,4)}, $C_{8}{:}C_{2}$ {\scriptsize(16,6)}, $D_{16}$ {\scriptsize(16,7)}, $QD_{16}$ {\scriptsize(16,8)} \\
  $X_{16aa}$ & $16$ & $13$ & $13\,1^{(4)}\,(-1)^{(8)}\,(-3)^{(3)}$ & all 14 groups of order $16$ \\
$X_{16ab}$ & $16$ & $14$ & $14\,0^{(8)}\,(-2)^{(7)}$ & all 14 groups of order $16$ \\
\midrule
 $X_{20a}$ & $20$ & $17$ & $17\,1^{(5)}\,(-1)^{(10)}\,(-3)^{(4)}$ & all 5 groups of order $20$ \\
$X_{20b}$ & $20$ & $18$ & $18\,0^{(10)}\,(-2)^{(9)}$ & all 5 groups of order $20$ \\
\midrule
$X_{24a}$ & $24$ & $9$ & $9\,3^{(2)}\,1^{(9)}\,(-1)^{(9)}\,(-3)^{(2)}\,(-9)$ & \emph{ab:} $C_{12}{\times}C_{2}$ {\scriptsize(24,9)}, $C_{6}{\times}C_{2}{\times}C_{2}$ {\scriptsize(24,15)}; \emph{non-ab:} $C_{4}{\times}S_{3}$ {\scriptsize(24,5)}, $D_{24}$ {\scriptsize(24,6)}, $C_{2}{\times}(C_{3}{:}C_{4})$ {\scriptsize(24,7)}, $(C_{6}{\times}C_{2}){:}C_{2}$ {\scriptsize(24,8)}, $C_{3}{\times}D_{8}$ {\scriptsize(24,10)}, $S_{4}$ {\scriptsize(24,12)}, $C_{2}{\times}A_{4}$ {\scriptsize(24,13)}, $C_{2}{\times}C_{2}{\times}S_{3}$ {\scriptsize(24,14)} \\
$X_{24b}$ & $24$ & $9$ & $9\,3^{(3)}\,1^{(9)}\,(-1)^{(6)}\,(-3)^{(2)}\,(-5)^{(3)}$ & \emph{non-ab:} $S_{4}$ {\scriptsize(24,12)}, $C_{2}{\times}A_{4}$ {\scriptsize(24,13)} \\
$X_{24c}$ & $24$ & $9$ & $9\,7\,1^{(9)}\,(-1)^{(9)}\,(-3)^{(2)}\,(-5)^{(2)}$ & \emph{ab:} $C_{12}{\times}C_{2}$ {\scriptsize(24,9)}, $C_{6}{\times}C_{2}{\times}C_{2}$ {\scriptsize(24,15)}; \emph{non-ab:} $C_{4}{\times}S_{3}$ {\scriptsize(24,5)}, $D_{24}$ {\scriptsize(24,6)}, $C_{2}{\times}(C_{3}{:}C_{4})$ {\scriptsize(24,7)}, $(C_{6}{\times}C_{2}){:}C_{2}$ {\scriptsize(24,8)}, $C_{3}{\times}D_{8}$ {\scriptsize(24,10)}, $S_{4}$ {\scriptsize(24,12)}, $C_{2}{\times}A_{4}$ {\scriptsize(24,13)}, $C_{2}{\times}C_{2}{\times}S_{3}$ {\scriptsize(24,14)} \\
$X_{24d}$ & $24$ & $9$ & $9\,5^{(2)}\,1^{(6)}\,(-1)^{(12)}\,(-3)^{(2)}\,(-7)$ & \emph{ab:} $C_{24}$ {\scriptsize(24,2)}, $C_{12}{\times}C_{2}$ {\scriptsize(24,9)}, $C_{6}{\times}C_{2}{\times}C_{2}$ {\scriptsize(24,15)}; \emph{non-ab:} $C_{3}{:}C_{8}$ {\scriptsize(24,1)}, $C_{3}{:}Q_{8}$ {\scriptsize(24,4)}, $C_{4}{\times}S_{3}$ {\scriptsize(24,5)}, $D_{24}$ {\scriptsize(24,6)}, $C_{2}{\times}(C_{3}{:}C_{4})$ {\scriptsize(24,7)}, $(C_{6}{\times}C_{2}){:}C_{2}$ {\scriptsize(24,8)}, $C_{3}{\times}D_{8}$ {\scriptsize(24,10)}, $C_{3}{\times}Q_{8}$ {\scriptsize(24,11)}, $S_{4}$ {\scriptsize(24,12)}, $C_{2}{\times}A_{4}$ {\scriptsize(24,13)}, $C_{2}{\times}C_{2}{\times}S_{3}$ {\scriptsize(24,14)} \\
$X_{24e}$ & $24$ & $9$ & $9\,5^{(3)}\,1^{(3)}\,(-1)^{(12)}\,(-3)^{(5)}$ & \emph{non-ab:} $SL(2,3)$ {\scriptsize(24,3)}, $S_{4}$ {\scriptsize(24,12)} \\
$X_{24f}$ & $24$ & $10$ & $10\,2^{(5)}\,0^{(12)}\,(-2)^{(5)}\,(-10)$ & \emph{ab:} $C_{12}{\times}C_{2}$ {\scriptsize(24,9)}, $C_{6}{\times}C_{2}{\times}C_{2}$ {\scriptsize(24,15)}; \emph{non-ab:} $C_{3}{:}Q_{8}$ {\scriptsize(24,4)}, $C_{4}{\times}S_{3}$ {\scriptsize(24,5)}, $D_{24}$ {\scriptsize(24,6)}, $C_{2}{\times}(C_{3}{:}C_{4})$ {\scriptsize(24,7)}, $(C_{6}{\times}C_{2}){:}C_{2}$ {\scriptsize(24,8)}, $C_{3}{\times}D_{8}$ {\scriptsize(24,10)}, $C_{3}{\times}Q_{8}$ {\scriptsize(24,11)}, $S_{4}$ {\scriptsize(24,12)}, $C_{2}{\times}A_{4}$ {\scriptsize(24,13)}, $C_{2}{\times}C_{2}{\times}S_{3}$ {\scriptsize(24,14)} \\
$X_{24g}$ & $24$ & $10$ & $10\,2^{(6)}\,0^{(12)}\,(-2)^{(2)}\,(-6)^{(3)}$ & \emph{non-ab:} $SL(2,3)$ {\scriptsize(24,3)}, $S_{4}$ {\scriptsize(24,12)} \\
$X_{24h}$ & $24$ & $10$ & $10\,6\,2^{(3)}\,0^{(12)}\,(-2)^{(5)}\,(-6)^{(2)}$ & \emph{ab:} $C_{12}{\times}C_{2}$ {\scriptsize(24,9)}, $C_{6}{\times}C_{2}{\times}C_{2}$ {\scriptsize(24,15)}; \emph{non-ab:} $C_{3}{:}Q_{8}$ {\scriptsize(24,4)}, $C_{4}{\times}S_{3}$ {\scriptsize(24,5)}, $D_{24}$ {\scriptsize(24,6)}, $C_{2}{\times}(C_{3}{:}C_{4})$ {\scriptsize(24,7)}, $(C_{6}{\times}C_{2}){:}C_{2}$ {\scriptsize(24,8)}, $C_{3}{\times}D_{8}$ {\scriptsize(24,10)}, $C_{3}{\times}Q_{8}$ {\scriptsize(24,11)}, $S_{4}$ {\scriptsize(24,12)}, $C_{2}{\times}A_{4}$ {\scriptsize(24,13)}, $C_{2}{\times}C_{2}{\times}S_{3}$ {\scriptsize(24,14)} \\
$X_{24i}$ & $24$ & $10$ & $10\,4^{(2)}\,2^{(3)}\,0^{(9)}\,(-2)^{(8)}\,(-8)$ & \emph{ab:} $C_{12}{\times}C_{2}$ {\scriptsize(24,9)}; \emph{non-ab:} $C_{4}{\times}S_{3}$ {\scriptsize(24,5)}, $D_{24}$ {\scriptsize(24,6)}, $C_{2}{\times}(C_{3}{:}C_{4})$ {\scriptsize(24,7)}, $(C_{6}{\times}C_{2}){:}C_{2}$ {\scriptsize(24,8)}, $C_{3}{\times}D_{8}$ {\scriptsize(24,10)}, $S_{4}$ {\scriptsize(24,12)}, $C_{2}{\times}A_{4}$ {\scriptsize(24,13)} \\
$X_{24j}$ & $24$ & $10$ & $10\,4^{(3)}\,2^{(3)}\,0^{(6)}\,(-2)^{(8)}\,(-4)^{(3)}$ & \emph{non-ab:} $S_{4}$ {\scriptsize(24,12)}, $C_{2}{\times}A_{4}$ {\scriptsize(24,13)} \\
$X_{24k}$ & $24$ & $10$ & $10\,8\,2^{(3)}\,0^{(9)}\,(-2)^{(8)}\,(-4)^{(2)}$ & \emph{ab:} $C_{12}{\times}C_{2}$ {\scriptsize(24,9)}; \emph{non-ab:} $C_{4}{\times}S_{3}$ {\scriptsize(24,5)}, $D_{24}$ {\scriptsize(24,6)}, $C_{2}{\times}(C_{3}{:}C_{4})$ {\scriptsize(24,7)}, $(C_{6}{\times}C_{2}){:}C_{2}$ {\scriptsize(24,8)}, $C_{3}{\times}D_{8}$ {\scriptsize(24,10)}, $S_{4}$ {\scriptsize(24,12)}, $C_{2}{\times}A_{4}$ {\scriptsize(24,13)} \\
$X_{24l}$ & $24$ & $10$ & $10\,6^{(2)}\,0^{(12)}\,(-2)^{(8)}\,(-6)$ & \emph{ab:} $C_{24}$ {\scriptsize(24,2)}, $C_{12}{\times}C_{2}$ {\scriptsize(24,9)}, $C_{6}{\times}C_{2}{\times}C_{2}$ {\scriptsize(24,15)}; \emph{non-ab:} $C_{3}{:}C_{8}$ {\scriptsize(24,1)}, $C_{3}{:}Q_{8}$ {\scriptsize(24,4)}, $C_{4}{\times}S_{3}$ {\scriptsize(24,5)}, $D_{24}$ {\scriptsize(24,6)}, $C_{2}{\times}(C_{3}{:}C_{4})$ {\scriptsize(24,7)}, $(C_{6}{\times}C_{2}){:}C_{2}$ {\scriptsize(24,8)}, $C_{3}{\times}D_{8}$ {\scriptsize(24,10)}, $C_{3}{\times}Q_{8}$ {\scriptsize(24,11)}, $S_{4}$ {\scriptsize(24,12)}, $C_{2}{\times}A_{4}$ {\scriptsize(24,13)}, $C_{2}{\times}C_{2}{\times}S_{3}$ {\scriptsize(24,14)} \\
$X_{24m}$ & $24$ & $10$ & $10\,4^{(3)}\,2^{(3)}\,0^{(6)}\,(-2)^{(8)}\,(-4)^{(3)}$ & \emph{non-ab:} $S_{4}$ {\scriptsize(24,12)}, $C_{2}{\times}A_{4}$ {\scriptsize(24,13)} \\
  $X_{24n}$ & $24$ & $11$ & $11\,1^{(11)}\,(-1)^{(11)}\,(-11)$ & \emph{ab:} $C_{12}{\times}C_{2}$ {\scriptsize(24,9)},
  $C_{6}{\times}C_{2}{\times}C_{2}$ {\scriptsize(24,15)}; \emph{non-ab:} $C_{4}{\times}S_{3}$ {\scriptsize(24,5)},
  $D_{24}$ {\scriptsize(24,6)}, $C_{2}{\times}(C_{3}{:}C_{4})$ {\scriptsize(24,7)}, $(C_{6}{\times}C_{2}){:}C_{2}$
  {\scriptsize(24,8)}, $C_{3}{\times}D_{8}$ {\scriptsize(24,10)}, $S_{4}$ {\scriptsize(24,12)}, $C_{2}{\times}A_{4}$
  {\scriptsize(24,13)}, $C_{2}{\times}C_{2}{\times}S_{3}$ {\scriptsize(24,14)} \\
$X_{24o}$ & $24$ & $11$ & $11\,5\,1^{(9)}\,(-1)^{(11)}\,(-7)^{(2)}$ & \emph{ab:} $C_{12}{\times}C_{2}$ {\scriptsize(24,9)}, $C_{6}{\times}C_{2}{\times}C_{2}$ {\scriptsize(24,15)}; \emph{non-ab:} $C_{4}{\times}S_{3}$ {\scriptsize(24,5)}, $D_{24}$ {\scriptsize(24,6)}, $C_{2}{\times}(C_{3}{:}C_{4})$ {\scriptsize(24,7)}, $(C_{6}{\times}C_{2}){:}C_{2}$ {\scriptsize(24,8)}, $C_{3}{\times}D_{8}$ {\scriptsize(24,10)}, $S_{4}$ {\scriptsize(24,12)}, $C_{2}{\times}A_{4}$ {\scriptsize(24,13)}, $C_{2}{\times}C_{2}{\times}S_{3}$ {\scriptsize(24,14)} \\
$X_{24p}$ & $24$ & $11$ & $11\,3^{(2)}\,1^{(9)}\,(-1)^{(8)}\,(-3)^{(3)}\,(-9)$ & \emph{ab:} $C_{12}{\times}C_{2}$ {\scriptsize(24,9)}; \emph{non-ab:} $C_{4}{\times}S_{3}$ {\scriptsize(24,5)}, $D_{24}$ {\scriptsize(24,6)}, $C_{2}{\times}(C_{3}{:}C_{4})$ {\scriptsize(24,7)}, $(C_{6}{\times}C_{2}){:}C_{2}$ {\scriptsize(24,8)}, $C_{3}{\times}D_{8}$ {\scriptsize(24,10)}, $S_{4}$ {\scriptsize(24,12)}, $C_{2}{\times}A_{4}$ {\scriptsize(24,13)} \\
$X_{24q}$ & $24$ & $11$ & $11\,7\,1^{(9)}\,(-1)^{(8)}\,(-3)^{(3)}\,(-5)^{(2)}$ & \emph{ab:} $C_{12}{\times}C_{2}$ {\scriptsize(24,9)}; \emph{non-ab:} $C_{4}{\times}S_{3}$ {\scriptsize(24,5)}, $D_{24}$ {\scriptsize(24,6)}, $C_{2}{\times}(C_{3}{:}C_{4})$ {\scriptsize(24,7)}, $(C_{6}{\times}C_{2}){:}C_{2}$ {\scriptsize(24,8)}, $C_{3}{\times}D_{8}$ {\scriptsize(24,10)}, $S_{4}$ {\scriptsize(24,12)}, $C_{2}{\times}A_{4}$ {\scriptsize(24,13)} \\
$X_{24r}$ & $24$ & $11$ & $11\,5^{(2)}\,1^{(6)}\,(-1)^{(11)}\,(-3)^{(3)}\,(-7)$ & \emph{ab:} $C_{12}{\times}C_{2}$ {\scriptsize(24,9)}; \emph{non-ab:} $C_{4}{\times}S_{3}$ {\scriptsize(24,5)}, $D_{24}$ {\scriptsize(24,6)}, $C_{2}{\times}(C_{3}{:}C_{4})$ {\scriptsize(24,7)}, $(C_{6}{\times}C_{2}){:}C_{2}$ {\scriptsize(24,8)}, $C_{3}{\times}D_{8}$ {\scriptsize(24,10)}, $S_{4}$ {\scriptsize(24,12)}, $C_{2}{\times}A_{4}$ {\scriptsize(24,13)} \\
$X_{24s}$ & $24$ & $11$ & $11\,3^{(3)}\,1^{(9)}\,(-1)^{(5)}\,(-3)^{(3)}\,(-5)^{(3)}$ & \emph{non-ab:} $S_{4}$ {\scriptsize(24,12)}, $C_{2}{\times}A_{4}$ {\scriptsize(24,13)} \\
$X_{24t}$ & $24$ & $11$ & $11\,9\,1^{(6)}\,(-1)^{(11)}\,(-3)^{(5)}$ & \emph{ab:} $C_{12}{\times}C_{2}$ {\scriptsize(24,9)}; \emph{non-ab:} $C_{4}{\times}S_{3}$ {\scriptsize(24,5)}, $D_{24}$ {\scriptsize(24,6)}, $C_{2}{\times}(C_{3}{:}C_{4})$ {\scriptsize(24,7)}, $(C_{6}{\times}C_{2}){:}C_{2}$ {\scriptsize(24,8)}, $C_{3}{\times}D_{8}$ {\scriptsize(24,10)}, $S_{4}$ {\scriptsize(24,12)}, $C_{2}{\times}A_{4}$ {\scriptsize(24,13)} \\
$X_{24u}$ & $24$ & $11$ & $11\,5^{(3)}\,1^{(3)}\,(-1)^{(11)}\,(-3)^{(6)}$ & \emph{non-ab:} $S_{4}$ {\scriptsize(24,12)}, $C_{2}{\times}A_{4}$ {\scriptsize(24,13)} \\
$X_{24v}$ & $24$ & $12$ & $12\,4^{(2)}\,2^{(3)}\,0^{(8)}\,(-2)^{(9)}\,(-8)$ & \emph{ab:} $C_{12}{\times}C_{2}$ {\scriptsize(24,9)}; \emph{non-ab:} $C_{4}{\times}S_{3}$ {\scriptsize(24,5)}, $D_{24}$ {\scriptsize(24,6)}, $C_{2}{\times}(C_{3}{:}C_{4})$ {\scriptsize(24,7)}, $(C_{6}{\times}C_{2}){:}C_{2}$ {\scriptsize(24,8)}, $C_{3}{\times}D_{8}$ {\scriptsize(24,10)}, $S_{4}$ {\scriptsize(24,12)}, $C_{2}{\times}A_{4}$ {\scriptsize(24,13)} \\
$X_{24w}$ & $24$ & $12$ & $12\,2^{(6)}\,0^{(11)}\,(-2)^{(3)}\,(-6)^{(3)}$ & \emph{non-ab:} $S_{4}$ {\scriptsize(24,12)}, $C_{2}{\times}A_{4}$ {\scriptsize(24,13)} \\
$X_{24x}$ & $24$ & $12$ & $12\,2^{(5)}\,0^{(11)}\,(-2)^{(6)}\,(-10)$ & \emph{ab:} $C_{12}{\times}C_{2}$ {\scriptsize(24,9)}; \emph{non-ab:} $C_{4}{\times}S_{3}$ {\scriptsize(24,5)}, $D_{24}$ {\scriptsize(24,6)}, $C_{2}{\times}(C_{3}{:}C_{4})$ {\scriptsize(24,7)}, $(C_{6}{\times}C_{2}){:}C_{2}$ {\scriptsize(24,8)}, $C_{3}{\times}D_{8}$ {\scriptsize(24,10)}, $S_{4}$ {\scriptsize(24,12)}, $C_{2}{\times}A_{4}$ {\scriptsize(24,13)} \\
$X_{24y}$ & $24$ & $12$ & $12\,4^{(3)}\,2^{(3)}\,0^{(5)}\,(-2)^{(9)}\,(-4)^{(3)}$ & \emph{non-ab:} $S_{4}$ {\scriptsize(24,12)}, $C_{2}{\times}A_{4}$ {\scriptsize(24,13)} \\
$X_{24z}$ & $24$ & $12$ & $12\,8\,2^{(3)}\,0^{(8)}\,(-2)^{(9)}\,(-4)^{(2)}$ & \emph{ab:} $C_{12}{\times}C_{2}$ {\scriptsize(24,9)}; \emph{non-ab:} $C_{4}{\times}S_{3}$ {\scriptsize(24,5)}, $D_{24}$ {\scriptsize(24,6)}, $C_{2}{\times}(C_{3}{:}C_{4})$ {\scriptsize(24,7)}, $(C_{6}{\times}C_{2}){:}C_{2}$ {\scriptsize(24,8)}, $C_{3}{\times}D_{8}$ {\scriptsize(24,10)}, $S_{4}$ {\scriptsize(24,12)}, $C_{2}{\times}A_{4}$ {\scriptsize(24,13)} \\
$X_{24aa}$ & $24$ & $12$ & $12\,6\,2^{(3)}\,0^{(11)}\,(-2)^{(6)}\,(-6)^{(2)}$ & \emph{ab:} $C_{12}{\times}C_{2}$ {\scriptsize(24,9)}; \emph{non-ab:} $C_{4}{\times}S_{3}$ {\scriptsize(24,5)}, $D_{24}$ {\scriptsize(24,6)}, $C_{2}{\times}(C_{3}{:}C_{4})$ {\scriptsize(24,7)}, $(C_{6}{\times}C_{2}){:}C_{2}$ {\scriptsize(24,8)}, $C_{3}{\times}D_{8}$ {\scriptsize(24,10)}, $S_{4}$ {\scriptsize(24,12)}, $C_{2}{\times}A_{4}$ {\scriptsize(24,13)} \\
$X_{24ab}$ & $24$ & $12$ & $12\,6^{(2)}\,0^{(11)}\,(-2)^{(9)}\,(-6)$ & \emph{ab:} $C_{12}{\times}C_{2}$ {\scriptsize(24,9)}, $C_{6}{\times}C_{2}{\times}C_{2}$ {\scriptsize(24,15)}; \emph{non-ab:} $C_{4}{\times}S_{3}$ {\scriptsize(24,5)}, $D_{24}$ {\scriptsize(24,6)}, $C_{2}{\times}(C_{3}{:}C_{4})$ {\scriptsize(24,7)}, $(C_{6}{\times}C_{2}){:}C_{2}$ {\scriptsize(24,8)}, $C_{3}{\times}D_{8}$ {\scriptsize(24,10)}, $S_{4}$ {\scriptsize(24,12)}, $C_{2}{\times}A_{4}$ {\scriptsize(24,13)}, $C_{2}{\times}C_{2}{\times}S_{3}$ {\scriptsize(24,14)} \\
  $X_{24ac}$ & $24$ & $12$ & $12\,10\,0^{(11)}\,(-2)^{(11)}$ & \emph{ab:} $C_{12}{\times}C_{2}$ {\scriptsize(24,9)},
  $C_{6}{\times}C_{2}{\times}C_{2}$ {\scriptsize(24,15)}; \emph{non-ab:} $C_{4}{\times}S_{3}$ {\scriptsize(24,5)},
  $D_{24}$ {\scriptsize(24,6)}, $C_{2}{\times}(C_{3}{:}C_{4})$ {\scriptsize(24,7)}, $(C_{6}{\times}C_{2}){:}C_{2}$
  {\scriptsize(24,8)}, $C_{3}{\times}D_{8}$ {\scriptsize(24,10)}, $S_{4}$ {\scriptsize(24,12)}, $C_{2}{\times}A_{4}$
  {\scriptsize(24,13)}, $C_{2}{\times}C_{2}{\times}S_{3}$ {\scriptsize(24,14)} \\
$X_{24ad}$ & $24$ & $13$ & $13\,3^{(2)}\,1^{(8)}\,(-1)^{(9)}\,(-3)^{(3)}\,(-9)$ & \emph{ab:} $C_{12}{\times}C_{2}$ {\scriptsize(24,9)}; \emph{non-ab:} $C_{4}{\times}S_{3}$ {\scriptsize(24,5)}, $D_{24}$ {\scriptsize(24,6)}, $C_{2}{\times}(C_{3}{:}C_{4})$ {\scriptsize(24,7)}, $(C_{6}{\times}C_{2}){:}C_{2}$ {\scriptsize(24,8)}, $C_{3}{\times}D_{8}$ {\scriptsize(24,10)}, $S_{4}$ {\scriptsize(24,12)}, $C_{2}{\times}A_{4}$ {\scriptsize(24,13)} \\
$X_{24ae}$ & $24$ & $13$ & $13\,7\,1^{(8)}\,(-1)^{(9)}\,(-3)^{(3)}\,(-5)^{(2)}$ & \emph{ab:} $C_{12}{\times}C_{2}$ {\scriptsize(24,9)}; \emph{non-ab:} $C_{4}{\times}S_{3}$ {\scriptsize(24,5)}, $D_{24}$ {\scriptsize(24,6)}, $C_{2}{\times}(C_{3}{:}C_{4})$ {\scriptsize(24,7)}, $(C_{6}{\times}C_{2}){:}C_{2}$ {\scriptsize(24,8)}, $C_{3}{\times}D_{8}$ {\scriptsize(24,10)}, $S_{4}$ {\scriptsize(24,12)}, $C_{2}{\times}A_{4}$ {\scriptsize(24,13)} \\
$X_{24af}$ & $24$ & $13$ & $13\,3^{(3)}\,1^{(8)}\,(-1)^{(6)}\,(-3)^{(3)}\,(-5)^{(3)}$ & \emph{non-ab:} $S_{4}$ {\scriptsize(24,12)}, $C_{2}{\times}A_{4}$ {\scriptsize(24,13)} \\
    $X_{24ag}$ & $24$ & $13$ & $13\,1^{(10)}\,(-1)^{(12)}\,(-11)$ & \emph{ab:} $C_{24}$ {\scriptsize(24,2)},
    $C_{12}{\times}C_{2}$ {\scriptsize(24,9)}, $C_{6}{\times}C_{2}{\times}C_{2}$ {\scriptsize(24,15)}; \emph{non-ab:}
    $C_{3}{:}C_{8}$ {\scriptsize(24,1)}, $C_{3}{:}Q_{8}$ {\scriptsize(24,4)}, $C_{4}{\times}S_{3}$
  {\scriptsize(24,5)},
    $D_{24}$ {\scriptsize(24,6)}, $C_{2}{\times}(C_{3}{:}C_{4})$ {\scriptsize(24,7)}, $(C_{6}{\times}C_{2}){:}C_{2}$
    {\scriptsize(24,8)}, $C_{3}{\times}D_{8}$ {\scriptsize(24,10)}, $C_{3}{\times}Q_{8}$ {\scriptsize(24,11)}, $S_{4}$
    {\scriptsize(24,12)}, $C_{2}{\times}A_{4}$ {\scriptsize(24,13)}, $C_{2}{\times}C_{2}{\times}S_{3}$
  {\scriptsize(24,14)} \\
$X_{24ah}$ & $24$ & $13$ & $13\,3^{(3)}\,1^{(8)}\,(-1)^{(6)}\,(-3)^{(3)}\,(-5)^{(3)}$ & \emph{non-ab:} $S_{4}$ {\scriptsize(24,12)}, $C_{2}{\times}A_{4}$ {\scriptsize(24,13)} \\
$X_{24ai}$ & $24$ & $13$ & $13\,5\,1^{(8)}\,(-1)^{(12)}\,(-7)^{(2)}$ & \emph{ab:} $C_{24}$ {\scriptsize(24,2)}, $C_{12}{\times}C_{2}$ {\scriptsize(24,9)}, $C_{6}{\times}C_{2}{\times}C_{2}$ {\scriptsize(24,15)}; \emph{non-ab:} $C_{3}{:}C_{8}$ {\scriptsize(24,1)}, $C_{3}{:}Q_{8}$ {\scriptsize(24,4)}, $C_{4}{\times}S_{3}$ {\scriptsize(24,5)}, $D_{24}$ {\scriptsize(24,6)}, $C_{2}{\times}(C_{3}{:}C_{4})$ {\scriptsize(24,7)}, $(C_{6}{\times}C_{2}){:}C_{2}$ {\scriptsize(24,8)}, $C_{3}{\times}D_{8}$ {\scriptsize(24,10)}, $C_{3}{\times}Q_{8}$ {\scriptsize(24,11)}, $S_{4}$ {\scriptsize(24,12)}, $C_{2}{\times}A_{4}$ {\scriptsize(24,13)}, $C_{2}{\times}C_{2}{\times}S_{3}$ {\scriptsize(24,14)} \\
$X_{24aj}$ & $24$ & $13$ & $13\,5^{(3)}\,1^{(2)}\,(-1)^{(12)}\,(-3)^{(6)}$ & \emph{non-ab:} $SL(2,3)$ {\scriptsize(24,3)}, $S_{4}$ {\scriptsize(24,12)} \\
$X_{24ak}$ & $24$ & $13$ & $13\,5^{(2)}\,1^{(5)}\,(-1)^{(12)}\,(-3)^{(3)}\,(-7)$ & \emph{ab:} $C_{12}{\times}C_{2}$ {\scriptsize(24,9)}, $C_{6}{\times}C_{2}{\times}C_{2}$ {\scriptsize(24,15)}; \emph{non-ab:} $C_{3}{:}Q_{8}$ {\scriptsize(24,4)}, $C_{4}{\times}S_{3}$ {\scriptsize(24,5)}, $D_{24}$ {\scriptsize(24,6)}, $C_{2}{\times}(C_{3}{:}C_{4})$ {\scriptsize(24,7)}, $(C_{6}{\times}C_{2}){:}C_{2}$ {\scriptsize(24,8)}, $C_{3}{\times}D_{8}$ {\scriptsize(24,10)}, $C_{3}{\times}Q_{8}$ {\scriptsize(24,11)}, $S_{4}$ {\scriptsize(24,12)}, $C_{2}{\times}A_{4}$ {\scriptsize(24,13)}, $C_{2}{\times}C_{2}{\times}S_{3}$ {\scriptsize(24,14)} \\
$X_{24al}$ & $24$ & $13$ & $13\,9\,1^{(5)}\,(-1)^{(12)}\,(-3)^{(5)}$ & \emph{ab:} $C_{12}{\times}C_{2}$ {\scriptsize(24,9)}, $C_{6}{\times}C_{2}{\times}C_{2}$ {\scriptsize(24,15)}; \emph{non-ab:} $C_{3}{:}Q_{8}$ {\scriptsize(24,4)}, $C_{4}{\times}S_{3}$ {\scriptsize(24,5)}, $D_{24}$ {\scriptsize(24,6)}, $C_{2}{\times}(C_{3}{:}C_{4})$ {\scriptsize(24,7)}, $(C_{6}{\times}C_{2}){:}C_{2}$ {\scriptsize(24,8)}, $C_{3}{\times}D_{8}$ {\scriptsize(24,10)}, $C_{3}{\times}Q_{8}$ {\scriptsize(24,11)}, $S_{4}$ {\scriptsize(24,12)}, $C_{2}{\times}A_{4}$ {\scriptsize(24,13)}, $C_{2}{\times}C_{2}{\times}S_{3}$ {\scriptsize(24,14)} \\
  $X_{24am}$ & $24$ & $14$ & $14\,2^{(4)}\,0^{(12)}\,(-2)^{(6)}\,(-10)$ & \emph{ab:} $C_{24}$ {\scriptsize(24,2)},
  $C_{12}{\times}C_{2}$ {\scriptsize(24,9)}, $C_{6}{\times}C_{2}{\times}C_{2}$ {\scriptsize(24,15)}; \emph{non-ab:}
  $C_{3}{:}C_{8}$ {\scriptsize(24,1)}, $C_{3}{:}Q_{8}$ {\scriptsize(24,4)}, $C_{4}{\times}S_{3}$ {\scriptsize(24,5)},
  $D_{24}$ {\scriptsize(24,6)}, $C_{2}{\times}(C_{3}{:}C_{4})$ {\scriptsize(24,7)}, $(C_{6}{\times}C_{2}){:}C_{2}$
  {\scriptsize(24,8)}, $C_{3}{\times}D_{8}$ {\scriptsize(24,10)}, $C_{3}{\times}Q_{8}$ {\scriptsize(24,11)}, $S_{4}$
  {\scriptsize(24,12)}, $C_{2}{\times}A_{4}$ {\scriptsize(24,13)}, $C_{2}{\times}C_{2}{\times}S_{3}$ {\scriptsize(24,14)}\\
$X_{24an}$ & $24$ & $14$ & $14\,6\,2^{(2)}\,0^{(12)}\,(-2)^{(6)}\,(-6)^{(2)}$ & \emph{ab:} $C_{24}$ {\scriptsize(24,2)}, $C_{12}{\times}C_{2}$ {\scriptsize(24,9)}, $C_{6}{\times}C_{2}{\times}C_{2}$ {\scriptsize(24,15)}; \emph{non-ab:} $C_{3}{:}C_{8}$ {\scriptsize(24,1)}, $C_{3}{:}Q_{8}$ {\scriptsize(24,4)}, $C_{4}{\times}S_{3}$ {\scriptsize(24,5)}, $D_{24}$ {\scriptsize(24,6)}, $C_{2}{\times}(C_{3}{:}C_{4})$ {\scriptsize(24,7)}, $(C_{6}{\times}C_{2}){:}C_{2}$ {\scriptsize(24,8)}, $C_{3}{\times}D_{8}$ {\scriptsize(24,10)}, $C_{3}{\times}Q_{8}$ {\scriptsize(24,11)}, $S_{4}$ {\scriptsize(24,12)}, $C_{2}{\times}A_{4}$ {\scriptsize(24,13)}, $C_{2}{\times}C_{2}{\times}S_{3}$ {\scriptsize(24,14)} \\
$X_{24ao}$ & $24$ & $14$ & $14\,2^{(5)}\,0^{(12)}\,(-2)^{(3)}\,(-6)^{(3)}$ & \emph{non-ab:} $SL(2,3)$ {\scriptsize(24,3)}, $S_{4}$ {\scriptsize(24,12)} \\
$X_{24ap}$ & $24$ & $14$ & $14\,4^{(3)}\,2^{(2)}\,0^{(6)}\,(-2)^{(9)}\,(-4)^{(3)}$ & \emph{non-ab:} $S_{4}$ {\scriptsize(24,12)}, $C_{2}{\times}A_{4}$ {\scriptsize(24,13)} \\
$X_{24aq}$ & $24$ & $14$ & $14\,4^{(2)}\,2^{(2)}\,0^{(9)}\,(-2)^{(9)}\,(-8)$ & \emph{ab:} $C_{12}{\times}C_{2}$ {\scriptsize(24,9)}, $C_{6}{\times}C_{2}{\times}C_{2}$ {\scriptsize(24,15)}; \emph{non-ab:} $C_{4}{\times}S_{3}$ {\scriptsize(24,5)}, $D_{24}$ {\scriptsize(24,6)}, $C_{2}{\times}(C_{3}{:}C_{4})$ {\scriptsize(24,7)}, $(C_{6}{\times}C_{2}){:}C_{2}$ {\scriptsize(24,8)}, $C_{3}{\times}D_{8}$ {\scriptsize(24,10)}, $S_{4}$ {\scriptsize(24,12)}, $C_{2}{\times}A_{4}$ {\scriptsize(24,13)}, $C_{2}{\times}C_{2}{\times}S_{3}$ {\scriptsize(24,14)} \\
$X_{24ar}$ & $24$ & $14$ & $14\,8\,2^{(2)}\,0^{(9)}\,(-2)^{(9)}\,(-4)^{(2)}$ & \emph{ab:} $C_{12}{\times}C_{2}$ {\scriptsize(24,9)}, $C_{6}{\times}C_{2}{\times}C_{2}$ {\scriptsize(24,15)}; \emph{non-ab:} $C_{4}{\times}S_{3}$ {\scriptsize(24,5)}, $D_{24}$ {\scriptsize(24,6)}, $C_{2}{\times}(C_{3}{:}C_{4})$ {\scriptsize(24,7)}, $(C_{6}{\times}C_{2}){:}C_{2}$ {\scriptsize(24,8)}, $C_{3}{\times}D_{8}$ {\scriptsize(24,10)}, $S_{4}$ {\scriptsize(24,12)}, $C_{2}{\times}A_{4}$ {\scriptsize(24,13)}, $C_{2}{\times}C_{2}{\times}S_{3}$ {\scriptsize(24,14)} \\
  $X_{24as}$ & $24$ & $17$ & $17\,1^{(9)}\,(-1)^{(12)}\,(-7)^{(2)}$ & \emph{ab:} $C_{24}$ {\scriptsize(24,2)},
  $C_{12}{\times}C_{2}$ {\scriptsize(24,9)}, $C_{6}{\times}C_{2}{\times}C_{2}$ {\scriptsize(24,15)}; \emph{non-ab:}
  $C_{3}{:}C_{8}$ {\scriptsize(24,1)}, $SL(2,3)$ {\scriptsize(24,3)}, $C_{3}{:}Q_{8}$ {\scriptsize(24,4)},
  $C_{4}{\times}S_{3}$ {\scriptsize(24,5)}, $D_{24}$ {\scriptsize(24,6)}, $C_{2}{\times}(C_{3}{:}C_{4})$
  {\scriptsize(24,7)}, $(C_{6}{\times}C_{2}){:}C_{2}$ {\scriptsize(24,8)}, $C_{3}{\times}D_{8}$ {\scriptsize(24,10)},
  $C_{3}{\times}Q_{8}$ {\scriptsize(24,11)}, $S_{4}$ {\scriptsize(24,12)}, $C_{2}{\times}A_{4}$ {\scriptsize(24,13)},
  $C_{2}{\times}C_{2}{\times}S_{3}$ {\scriptsize(24,14)} \\
$X_{24at}$ & $24$ & $18$ & $18\,2^{(3)}\,0^{(12)}\,(-2)^{(6)}\,(-6)^{(2)}$ & \emph{ab:} $C_{24}$ {\scriptsize(24,2)}, $C_{12}{\times}C_{2}$ {\scriptsize(24,9)}, $C_{6}{\times}C_{2}{\times}C_{2}$ {\scriptsize(24,15)}; \emph{non-ab:} $C_{3}{:}C_{8}$ {\scriptsize(24,1)}, $SL(2,3)$ {\scriptsize(24,3)}, $C_{3}{:}Q_{8}$ {\scriptsize(24,4)}, $C_{4}{\times}S_{3}$ {\scriptsize(24,5)}, $D_{24}$ {\scriptsize(24,6)}, $C_{2}{\times}(C_{3}{:}C_{4})$ {\scriptsize(24,7)}, $(C_{6}{\times}C_{2}){:}C_{2}$ {\scriptsize(24,8)}, $C_{3}{\times}D_{8}$ {\scriptsize(24,10)}, $C_{3}{\times}Q_{8}$ {\scriptsize(24,11)}, $S_{4}$ {\scriptsize(24,12)}, $C_{2}{\times}A_{4}$ {\scriptsize(24,13)}, $C_{2}{\times}C_{2}{\times}S_{3}$ {\scriptsize(24,14)} \\
$X_{24au}$ & $24$ & $19$ & $19\,1^{(9)}\,(-1)^{(9)}\,(-3)^{(3)}\,(-5)^{(2)}$ & \emph{ab:} $C_{12}{\times}C_{2}$ {\scriptsize(24,9)}, $C_{6}{\times}C_{2}{\times}C_{2}$ {\scriptsize(24,15)}; \emph{non-ab:} $C_{4}{\times}S_{3}$ {\scriptsize(24,5)}, $D_{24}$ {\scriptsize(24,6)}, $C_{2}{\times}(C_{3}{:}C_{4})$ {\scriptsize(24,7)}, $(C_{6}{\times}C_{2}){:}C_{2}$ {\scriptsize(24,8)}, $C_{3}{\times}D_{8}$ {\scriptsize(24,10)}, $S_{4}$ {\scriptsize(24,12)}, $C_{2}{\times}A_{4}$ {\scriptsize(24,13)}, $C_{2}{\times}C_{2}{\times}S_{3}$ {\scriptsize(24,14)} \\
$X_{24av}$ & $24$ & $20$ & $20\,2^{(3)}\,0^{(9)}\,(-2)^{(9)}\,(-4)^{(2)}$ & \emph{ab:} $C_{12}{\times}C_{2}$ {\scriptsize(24,9)}, $C_{6}{\times}C_{2}{\times}C_{2}$ {\scriptsize(24,15)}; \emph{non-ab:} $C_{4}{\times}S_{3}$ {\scriptsize(24,5)}, $D_{24}$ {\scriptsize(24,6)}, $C_{2}{\times}(C_{3}{:}C_{4})$ {\scriptsize(24,7)}, $(C_{6}{\times}C_{2}){:}C_{2}$ {\scriptsize(24,8)}, $C_{3}{\times}D_{8}$ {\scriptsize(24,10)}, $S_{4}$ {\scriptsize(24,12)}, $C_{2}{\times}A_{4}$ {\scriptsize(24,13)}, $C_{2}{\times}C_{2}{\times}S_{3}$ {\scriptsize(24,14)} \\
  $X_{24aw}$ & $24$ & $21$ & $21\,1^{(6)}\,(-1)^{(12)}\,(-3)^{(5)}$ & all 15 groups of order $24$ \\
$X_{24ax}$ & $24$ & $22$ & $22\,0^{(12)}\,(-2)^{(11)}$ & all 15 groups of order $24$ \\
\midrule
  $X_{28a}$ & $28$ & $25$ & $25\,1^{(7)}\,(-1)^{(14)}\,(-3)^{(6)}$ & all 4 groups of order $28$ \\
$X_{28b}$ & $28$ & $26$ & $26\,0^{(14)}\,(-2)^{(13)}$ & all 4 groups of order $28$ \\
\midrule
$X_{30a}$ & $30$ & $17$ & $17\,5^{(5)}\,(-1)^{(15)}\,(-3)^{(9)}$ & none (not a Cayley graph) \\
\end{longtable}
\endgroup

\section{The \texorpdfstring{$\GL$}{GL} and \texorpdfstring{$\GU$}{GU} graphs of Pantangi and Sin}
\label{app:pansin-glgu}

This appendix describes the computation behind
Proposition~\ref{prop:pansin-glgu}.  Let $G$ be $\GL(2,q)$
or $\GU(2,q)$ with $q$ odd, let $t=-I$, and let $X=\Cay(G,S)$
be one of the four graphs of the proposition; so $S$ is the connection
set of \cite[Theorem~4.8 or~4.15]{PanSin2026}, respectively the set
$\mathfrak T$ for the graph $\Gamma'$.  In every case $S$ is a union of
conjugacy classes with $St=S$ and $t\notin S$.  Hence the pairs
$\{g,tg\}$ are non-adjacent twins, and, exactly as in the last
paragraph of the proof of Lemma~\ref{lem:pansin-structure}, whether $h$
is adjacent to $g$ depends only on the images of $g$ and $h$ in
$G/\langle t\rangle$, so distinct pairs are joined by all four possible
edges or by none.  Thus $X=Y[2K_1]$, where
\[
    Y=\Cay(G/\langle t\rangle,\pi(S))
\]
and $\pi$ is the quotient map.

Lemma~\ref{lem:twin-reduction} requires in addition that every twin
class of $X$ has size two.  We have not established this for the
$\GL$ and $\GU$ families in general; for the four
graphs of the proposition we computed the twin classes directly, by
grouping the vertices by their neighbourhoods, and they are exactly the
pairs $\{g,tg\}$.  The lemma therefore applies: each of the four graphs
is a Cayley graph of an abelian group if and only if $\Aut(Y)$ contains
an abelian regular subgroup.  We remark that the computed spectra of
the four quotients consist of odd integers, so the perfect state
transfer in these graphs is the odd case of
Proposition~\ref{prop:doubling}, complementing the even case seen
for $\mathrm{SL}(2,q)$ in Remark~\ref{rem:pansin-parity}.

The four quotient graphs $Y$ have $24$, $48$, $240$ and $360$ vertices,
and their automorphism groups have orders $1152$, $73728$, $115200$ and
$172800$, respectively.  For $\Gamma'$ the quotient is
\[
  \Cay(S_4,\{\text{all $17$ elements of order $2$ or $3$}\}),
\]
with spectrum $\{17,5,1^9,(-1)^4,(-3)^9\}$; for the $\GU(2,3)$
graph it is a Cayley graph of $\mathrm{SmallGroup}(48,30)$ with
spectrum $\{31,7,3^9,1^{18},(-3)^6,(-5)^{13}\}$.

In each of the four cases an exhaustive search in SageMath \cite{sage},
using GAP for the group-theoretic steps, shows that $\Aut(Y)$ contains
no abelian regular subgroup.  The search is organized as follows.
Suppose that $A\le\Aut(Y)$ is abelian and regular, and fix a prime $p$
dividing $|V(Y)|$.  By Cauchy's theorem, $A$ contains an element $g$ of
order $p$, and $g$ is fixed-point-free, because a regular group is
semiregular.  Since $A$ is abelian, $A\le C:=C_{\Aut(Y)}(g)$, and $C$
is transitive, because it contains $A$.  After conjugating $A$, we may
take $g$ to be one of the conjugacy class representatives of the
fixed-point-free elements of order $p$ in $\Aut(Y)$ whose centralizer
is transitive.  For each such representative we search its centralizer
$C$ for abelian regular subgroups, either by enumerating the subgroups
of $C$ of order $|V(Y)|$ directly, or by running
\textsf{IsomorphicSubgroups} over the isomorphism types of abelian
groups of order $|V(Y)|$, and we test the abelian subgroups so found
for transitivity.  No transitive abelian subgroup occurs in any of the
four cases, which proves Proposition~\ref{prop:pansin-glgu}.

The proof of Theorem~\ref{thm:pansin-sl} does not extend to the
$\GL$ and $\GU$ families: the quotient
$G/\langle t\rangle$ is not simple, its two-sided translation group is
then not primitive, and the socle argument breaks at the first step.
This is the obstacle behind Problem~\ref{prob:pansin-stability}.

\end{document}